\documentclass[a4paper,11pt]{article}

\usepackage{amsmath}
\usepackage{amssymb}
\usepackage{tabularx}
\usepackage{enumerate}
\usepackage{setspace}
\usepackage{graphicx}
\usepackage{color}
\usepackage{graphicx}
\usepackage{caption}
\usepackage{subfigure}
\usepackage{bm}
\usepackage{float}

\usepackage[pagewise]{lineno}
\topmargin        -0.40in
\oddsidemargin    0.08in
\evensidemargin 0.08in
\marginparwidth0.00in
\marginparsep     0.00in
\textwidth 15.5cm
\textheight 23.5cm

\newtheorem{theorem}{Theorem}[section]

\newtheorem{lemma}{Lemma}[section]
\newtheorem{remark}{Remark}[section]

\newtheorem{definition}{Definition}[section]
\newenvironment {proof} {\noindent {\bf Proof.}}{\quad $\blacksquare$\par\vspace{3mm}}

\newcommand{\be}{\begin{equation}}
\newcommand{\ee}{\end{equation}}
\newcommand\bes{\begin{eqnarray}}
\newcommand\ees{\end{eqnarray}}
\newcommand{\bess}{\begin{eqnarray*}}
\newcommand{\eess}{\end{eqnarray*}}

\begin{document}
\setlength{\baselineskip}{15.2pt} \pagestyle{myheadings}

\title{ \bf  \LARGE Global solvability in a higher-dimensional chemotaxis system for Alopecia Areata: Nonlinear proliferation versus logistic degradation}
\date{\empty}
\author{
\sl{Haotian Tang$^a$ and Jiashan Zheng$^b$\thanks{Corresponding author.}}\\
{ \normalsize $^a$ School of Mathematics, Renmin University of China,}\\
{ \normalsize Beijing 100872, P. R. China}\\
{ \normalsize $^b$ School of Mathematical and Informational Sciences, Yantai University,}\\
{ \normalsize Yantai, 264005, Shandong, P. R. China}\\
{}\\
}
\maketitle
\renewcommand{\thefootnote}{}
\footnotetext{Email address: haotiantang2022@163.com (H. Tang), zhengjiashan2008@163.com (J. Zheng).}

\begin{quote}
\noindent {\bf Abstract.} {This paper is concerned with the Neumann initial-boundary value problem for the chemotaxis system:
\begin{equation*}
\left\{\aligned
  &  u_{t}=\Delta u-\chi_{1}\nabla\cdot(u\nabla w)+w-\mu_{1}u^{r_{1}},&&x\in\Omega,t>0,\\
  &  v_{t}=\Delta v-\chi_{2}\nabla\cdot(v\nabla w)+w+ruv-\mu_{2}v^{r_{2}},&&x\in\Omega,t>0,\\
  &  w_{t}=\Delta w+u+v-w,&&x\in\Omega,t>0,\\
\endaligned\right.
\end{equation*}
which was initially proposed by Dobreva \emph{et al.} (\cite{DP2020}) to describe the dynamics of hair loss in Alopecia Areata form. Here, $\Omega\subset\mathbb R^{N}$ $(N\geq3)$ is a smooth bounded domain, and the parameters fulfill $\chi_{i}>0$, $\mu_{i}>0$, $r_{i}\geq2$ $(i=1,2)$ and $r>0$. The inherent presence of two positive chemotaxis terms, along with the zero-order nonlinear production term $ruv$, significantly complicates the energy estimation. It is proved that if $r_{1}=r_{2}=2$ and $\min\{\mu_{1},\mu_{2}\}>\mu^{\star}$ or $r_{i}>2$ $(i=1,2)$, this problem admits a global bounded classical solution for all sufficiently smooth initial data. The lower bound is given by $\mu^{\star}=\frac{2(N-2)_{+}}{N}C_{\frac{N}{2}+1}^{\frac{1}{\frac{N}{2}+1}}\max\{\chi_{1},\chi_{2}\}+\left[(\frac{2}{N})^{\frac{2}{N+2}}\frac{N}{N+2}\right]r$, where $C_{\frac{N}{2}+1}$ is a positive constant corresponding to the maximal Sobolev regularity. Furthermore, we demonstrate that the basic assumption $\mu_{i}>0$ $(i=1,2)$ is sufficient to guarantee the global existence of weak solutions for $N\geq3$. Notably, our findings not only extend or refine several existing results (see Remarks 1.1-1.2) but also provide new insights into the weak solution theory of this system for the first time.}

\noindent {\bf AMS subject classifications:} {35A01, 35K20, 35K55, 35Q92, 92C17.}

\noindent {\bf Keywords:} {Chemotaxis, Alopecia Areata, Global existence, Weak solutions.}
\end{quote}
\newpage
\newcommand\HI{{\bf I}}
\section{Introduction}
\indent
Alopecia areata (AA), commonly known as ``ghost shaving" in China, is an autoimmune disorder disease characterized by localized or complete hair loss. It is not a rare disease, with a global prevalence of about 2\% (\cite{ZL2021}), and a survey in \cite{LG2020} shows that children are more affected than adults. Although AA is not life-threatening like other autoimmune diseases, it imposes a significant psychological burden on patients and negatively affects their social lives (\cite{K2011}). In recent years, the incidence of AA has been rising, but public awareness of it remains insufficient.

A popular hypothesis concerning the etiology of AA (see \cite{GE2012}) is that hair loss results from the immune system's response to autoantigens synthesized in hair follicles (HFs).  Some studies in \cite{GK2006,IS2008} suggest that effector autoreactive CD8$^{+}$ T-cells/NKG2D$^{+}$ cells and effector autoreactive helper CD4$^{+}$ T-cells attack the epithelium of anagen hair follicles, resulting in the shedding of HFs. However, there are no mathematical models that reflect the interaction between HFs and the immune system. In \cite{DP2015}, Dobreva \emph{et al.} constructed an ODE system to describe the population of CD8$^{+}$ T-cells and CD4$^{+}$ T-cells, and then they coupled this system with some equations modelling the hair cycle in \cite{DP2017}. These two models focus on the drivers of $AA$ and are of great significance to explain the mechanisms of AA and formulate therapeutic strategies. Moreover, interferon-gamma (IFN-$\gamma$), which is the most potent inducer of HF immune privilege, induces the chemokine CXCL10 and strongly influences the migration of autoreactive lymphocytes in AA (\cite{IH2013}). Recently, Dobreva \emph{et al.} (\cite{DP2020}) first systematically considered the spatio-temporal patterns of three key components associated with AA progression  and developed a chemotaxis system (fully parabolic):
\begin{equation}\label{model1}
\left\{\aligned
  &  u_{t}=\Delta u-\chi_{1}\nabla\cdot(u\nabla w)+w-\mu_{1}u^{2},&&x\in\Omega,t>0,\\
  &  v_{t}=\Delta v-\chi_{2}\nabla\cdot(v\nabla w)+w+ruv-\mu_{2}v^{2},&&x\in\Omega,t>0,\\
  &  w_{t}=\Delta w+u+v-w,&&x\in\Omega,t>0.\\
\endaligned\right.
\end{equation}
The unknown functions $u=u(x,t)$, $v=v(x,t)$ and $w=w(x,t)$ represent the density of CD4$^{+}$ T-cells, the density of CD8$^{+}$ T-cells and the concentration of IFN-$\gamma$, respectively. The model elaborates on a complex mechanism within the immune microenvironment: IFN-$\gamma$ is produced by CD8$^{+}$ and CD4$^{+}$ T-cells, meanwhile, T-cells are activated in response to IFN-$\gamma$; T-cells tend to migrate towards areas with high concentrations of IFN-$\gamma$, which is a positive chemotaxis effect described by the terms $-\chi_{1}\nabla\cdot(u\nabla w)$ and $-\chi_{2}\nabla\cdot(v\nabla w)$; CD4$^{+}$ T cells act as modifiers to help the proliferation of CD8$^{+}$ T-cells. From a mathematical point of view, two major challenges arise in the analysis of this three-component system: one is that both types of immune cells are activated by IFN-$\gamma$ and undergo chemotaxis, and the other is the presence of the zero-order nonlinear production term $ruv$.

We review some mathematical results for model \eqref{model1}. In \cite{LT2021}, Lou and Tao demonstrated that arbitrarily small $\mu_{i}>0$ can guarantee the global existence and boundedness of classical solution when $N=2$, while when $N=3$, suitably large $\mu_{i}$ $\left(\mu_{i}>16+8\chi_{i}^{2}+\frac{r}{2}, \mu_{1}\mu_{2}^{2}>\frac{4}{27}r^{3}\right)$ is required to prevent blow-up. Moreover, they established the globally asymptotic stability of unique positive equilibrium under certain special parameter conditions. Subsequently, the findings in \cite{SZ2023!,SZ2023!!} further revealed the impact of chemotactic coefficients $\chi_{i}$ on the stability, the instability and the bifurcations. In the case $N\geq4$, the first boundedness result was proved in \cite{ZX2023} for sufficiently large $\mu_{i}$. Later, it was shown in \cite{ZF2024} that sufficiently small $\chi_{i}$ can ensure the boundedness of solutions when $N\geq1$. When the third equation in system \eqref{model1} is replaced by $0=\Delta w+u+v-w$, Tao and Xu (\cite{TX2022}) obtained the global existence result under the conditions $\mu_{1}>\frac{(N-2)_{+}}{N}(2\chi_{1}+\frac{\chi_{2}}{2})+\frac{r}{2}$ and $\mu_{2}>\frac{(N-2)_{+}}{N}(2\chi_{2}+\frac{\chi_{1}}{2})+r$. Then the conditions for $\mu_{i}$ were improved in \cite{SZ2023}. For more studies of system \eqref{model1}, we refer readers to \cite{QZ2024,SZ2023,ZF2025!,ZF2025} for detailed discussions on the impact of complex mechanisms such as nonlinear diffusion, signal-dependent and generalized logistic source on the global existence and boundedness of solutions.

Based on the model \eqref{model1}, Shan and Yang (\cite{SY2025}) recently investigated the global solvability of classical solutions to a quasilinear chemotaxis model incorporating volume-filling effects. Notably, their Theorem 1.2 examines how strong logistic damping can prevent blow-up of solutions in any dimensional domains, but the proof heavily relies on the restrictive conditions $r_{1}\geq r_{2}$ and $r_{i}\leq1+\frac{2(N+2)}{N}$ ($i=1,2$). While the above research results have significantly advanced the mathematical understanding of AA progression, several research gaps still remain unresolved. In this paper, we would like to consider the following questions:\vspace{0.5em}\\
\hspace*{-0.4cm}(Q1) \emph{Can we provide a unified and quantitative description of the relationship between logistic damping rates $\mu_{i}$, chemotaxis coefficients $\chi_{i}$ and proliferation rate $r$ to ensure the boundedness of classical solutions in $\geq$3D?}\vspace{0.5em}\\
\hspace*{-0.4cm}(Q2) \emph{How strong must two independent generalized logistic damping effects be to prevent the blow-up in \eqref{model1} with homogeneous Neumann boundary conditions when $N\geq3$?}\vspace{0.5em}

In \cite{SY2025}, the restrictive conditions $r_{1}\geq r_{2}$ and $r_{i}\leq1+\frac{2(N+2)}{N}$ ($i=1,2$) imply that the two logistic terms are dependent and the upper bounds of $r_{1}$ and $r_{2}$ are constrained by the spatial dimension. In reality, the degradation of the two T-cells populations is independent, and higher values of $r_{1}$ and $r_{2}$ more effectively suppress chemotactic aggression. As we know, the (generalized) logistic source plays an important role in the prevention of blow-ups in various chemotaxis models (see e.g. \cite{LZ2023,QK2022,W2010a,WK2016,X2018!,ZK2019,ZX2024}). Nevertheless, it is observed that few scholars have focused on establishing the global existence and boundedness of classical solutions for model \eqref{model1} and its variants when $N\geq3$. In particular, comprehensive quantitative analysis of logistic damping role within this context appears to be largely absent.\vspace{0.5em}\\
\hspace*{-0.4cm}(Q3) \emph{Can the natural conditions $\mu_{i}>0$ ($i=1,2$) guarantee the global existence of solutions in the weak sense?}\vspace{0.5em}

For the minimal Keller-Segel system, the authors (\cite{L2015,ZL2018}) established the global existence of weak solutions in higher-dimensional $(N\geq3)$ domains for arbitrarily small values of $\mu>0$. Subsequently, global generalized solvability was also demonstrated for a three-dimensional Keller-Segel-Navier-Stokes system with any $\mu>0$ (see \cite{W2019}). For an attraction-repulsion system, the global weak solutions were constructed for any $\mu>0$ as well (see \cite{JJ2024}). However, owing to the increased complexity of the mechanisms in our three-component system \eqref{model1}, the existence of the corresponding solutions in higher dimensions remains unclear when $\mu_{1}$ and $\mu_{2}$ are both sufficiently small.\vspace{0.5em}

Relying on an in-depth understanding of these three questions, we investigate the initial-boundary value problem for the chemotaxis system modelling a pattern of AA dynamics:
\begin{equation}\label{model}
\left\{\aligned
  &  u_{t}=\Delta u-\chi_{1}\nabla\cdot(u\nabla w)+w-\mu_{1}u^{r_{1}},&&x\in\Omega,t>0,\\
  &  v_{t}=\Delta v-\chi_{2}\nabla\cdot(v\nabla w)+w+ruv-\mu_{2}v^{r_{2}},&&x\in\Omega,t>0,\\
  &  w_{t}=\Delta w+u+v-w,&&x\in\Omega,t>0,\\
  &  \frac{\partial u}{\partial \nu}=\frac{\partial v}{\partial \nu}=\frac{\partial w}{\partial \nu}=0,&&x\in\partial\Omega,t>0,\\
  &  u(x,0)=u_{0}(x),v(x,0)=v_{0}(x),w(x,0)=w_{0}(x),&&x\in\Omega,
\endaligned\right.
\end{equation}
where $\Omega\subset\mathbb R^{N}$ $(N\geq3)$ is a bounded domain with smooth boundary $\partial\Omega$. Here, the parameters fulfill $\chi_{i}>0$, $\mu_{i}>0$, $r_{i}\geq2$ $(i=1,2)$ and $r>0$, and the initial data satisfies
\begin{equation}\label{initial}
\left\{\aligned
  &  u_{0}\in C^{0}(\overline{\Omega}) \ \ \mbox{with} \ \ u_{0}\geq, \not\equiv0, \\
  &  v_{0}\in C^{0}(\overline{\Omega}) \ \ \mbox{with} \ \ v_{0}\geq0,\\
  & w_{0}\in W^{1,\infty}(\Omega) \ \ \mbox{with} \ \ w_{0}\geq0.\\
\endaligned\right.
\end{equation}
It is worth noting that the strong coupling among different variables poses significant analytical challenges in handling two chemotactic cross-diffusion terms and the nonlinear term $ruv$ simultaneously. We employ precise energy estimates in conjunction with the maximal Sobolev regularity theory to establish the global existence and boundedness of classical solution. Furthermore, building upon the ideas in \cite{L2015,ZL2018}, we fully explore the intrinsic relationships among the solution components to improve the corresponding weak solution theory for model \eqref{model} through the application of the Aubin-Lions lemma.

We first show the pivotal role of the (generalized) logistic damping in ensuring the global existence and boundedness of classical solutions when $N\geq3$.
\begin{theorem}\label{theorem1}
Let $\Omega\subset\mathbb R^{N}$ $(N\geq3)$ be a bounded domain with smooth boundary, and suppose that the nonnegative initial data $(u_{0},v_{0},w_{0})$ fulfills \eqref{initial}. If either $r_{1}=r_{2}=2$ and $\min\{\mu_{1},\mu_{2}\}>\frac{2(N-2)_{+}}{N}C_{\frac{N}{2}+1}^{\frac{1}{\frac{N}{2}+1}}\max\{\chi_{1},\chi_{2}\}+\left[(\frac{2}{N})^{\frac{2}{N+2}}\frac{N}{N+2}\right]r$, or $r_{i}>2$ $(i=1,2)$, there exist uniquely determined functions
\begin{equation}\label{result}
\left\{\aligned
&u\in C^{0}(\overline{\Omega}\times[0,\infty))\cap C^{2,1}(\overline{\Omega}\times(0,\infty)),
&&\\
&v\in C^{0}(\overline{\Omega}\times[0,\infty))\cap C^{2,1}(\overline{\Omega}\times(0,\infty)),
&&\\
&w\in C^{0}(\overline{\Omega}\times[0,\infty))\cap C^{2,1}(\overline{\Omega}\times(0,\infty))
\endaligned\right.
\end{equation}
such that the triple $(u,v,w)$ forms a classical solution to \eqref{model}.
Moreover, $(u, v, w)$ is bounded in $\Omega\times(0,\infty)$ in the sense that there exists a constant $C>0$ satisfying
\begin{equation*}
\|u(\cdot,t)\|_{L^{\infty}(\Omega)}+\|v(\cdot,t)\|_{L^{\infty}(\Omega)}
+\|w(\cdot,t)\|_{W^{1,\infty}(\Omega)}\leq C\ \ \ \mbox{for all}\ t>0.
\end{equation*}
\end{theorem}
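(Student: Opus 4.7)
The plan is to follow the standard route for chemotaxis systems with absorbing logistic source: I first produce a local classical solution on some maximal interval $[0,T_{\max})$ via an Amann-type contraction argument, equipped with the extensibility criterion that $T_{\max}<\infty$ forces $\|u(\cdot,t)\|_{L^\infty}+\|v(\cdot,t)\|_{L^\infty}+\|\nabla w(\cdot,t)\|_{L^\infty}\to\infty$ as $t\nearrow T_{\max}$. I then rule out this blow-up by establishing uniform-in-time $L^p$ bounds on $u$ and $v$ for some $p>N/2$. Once this is in place, standard smoothing estimates for the Neumann heat semigroup applied to the $w$-equation yield a bound on $\|\nabla w(\cdot,t)\|_{L^\infty}$, and a Moser-Alikakos iteration promotes the $L^p$ bounds to $L^\infty$, contradicting the blow-up alternative.

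The decisive step is the $L^p$ estimate. Testing the $u$- and $v$-equations against $pu^{p-1}$ and $pv^{p-1}$, integrating by parts on the chemotactic flux (so that $p(p-1)\chi_1\int_\Omega u^{p-1}\nabla u\cdot\nabla w=-(p-1)\chi_1\int_\Omega u^p\Delta w$, and analogously for $v$), and summing produces, after discarding the nonnegative diffusion contributions, a differential inequality of schematic form $\tfrac{d}{dt}\int_\Omega(u^p+v^p) + p\mu_1\int_\Omega u^{p+r_1-1} + p\mu_2\int_\Omega v^{p+r_2-1} \leq (p-1)\chi_1\int_\Omega u^p|\Delta w| + (p-1)\chi_2\int_\Omega v^p|\Delta w| + rp\int_\Omega uv^p + \mbox{l.o.t.}$, where the lower-order terms are controlled by the mass bounds obtained by integrating the first two equations. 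Each integral $\int u^p|\Delta w|$ is handled by H\"older's inequality, giving $\|u\|_{L^{p+1}}^p\|\Delta w\|_{L^{p+1}}$, combined with the maximal Sobolev regularity applied to the Neumann problem $w_t-\Delta w+w=u+v$ at exponent $p+1$; this is the step that introduces the constant $C_{p+1}$. In the critical case $r_1=r_2=2$ the natural choice is $p=N/2$, so that $p+1=\tfrac{N}{2}+1$, and passing to the appropriate fractional power of the max-reg bound yields $C_{\frac{N}{2}+1}^{1/(\frac{N}{2}+1)}$ in the resulting coefficient. The cross term $rp\int uv^p$ is split by a weighted Young's inequality whose free parameter I optimize to produce precisely the factor $(2/N)^{2/(N+2)}N/(N+2)$. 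Collecting coefficients, the threshold for absorbing all the super-linear contributions into the logistic dissipations $p\mu_1\int u^{p+1}$ and $p\mu_2\int v^{p+1}$ works out to be exactly $\min\{\mu_1,\mu_2\}>\mu^\star$.

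When $r_1,r_2>2$, the logistic sink $\mu_i\int u^{p+r_i-1}$ is strictly stronger than any fixed power $\int u^{p+1}$ once $p$ is chosen sufficiently large. Applying Young's inequality with arbitrarily small weights on both $\int u^p|\Delta w|$ and $\int uv^p$ then absorbs these terms into the logistic dissipation without any quantitative lower bound on $\mu_i$, so the supercritical case is considerably easier and does not require the fine constant analysis above. I expect the principal obstacle to be precisely the constant bookkeeping in the critical regime $r_1=r_2=2$: the exponent $p=N/2$ is forced by the need to match the dissipation $\int u^{p+1}$ that the quadratic logistic damping provides, and the constants arising from integration by parts, H\"older's inequality, the optimally-weighted Young's inequality for $rp\int uv^p$, and the maximal Sobolev regularity must stack so that the absorption threshold is exactly $\mu^\star$ rather than some coarser multiple. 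Given the uniform $L^p$ bound, the bootstrap to $L^\infty$ and the conclusion of global existence and boundedness follow from a standard iteration and semigroup argument.
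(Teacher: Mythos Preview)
Your overall plan matches the paper's: local existence with extensibility, then $L^p$ bounds on $u,v$ for some $p>N/2$ via testing, integration by parts to produce $\int u^p|\Delta w|$, maximal Sobolev regularity on the $w$-equation, and a Moser bootstrap to $L^\infty$. Two technical points, however, deviate from the paper and are exactly where the sharp threshold $\mu^\star$ lives.

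First, you propose to estimate $\int u^p|\Delta w|$ by H\"older and then invoke maximal regularity to control $\|\Delta w\|_{L^{p+1}}$. Maximal Sobolev regularity does not give a pointwise-in-time bound on $\|\Delta w(\cdot,t)\|_{L^{p+1}}$; it gives a space-time bound. The paper therefore multiplies the differential inequality by the weight $e^{(q+1)t}$, integrates from a fixed $s_0>0$ to $t$, and only then applies the maximal regularity estimate in the form $\int_{s_0}^t e^{(q+1)s}(\|\Delta w\|_{L^{q+1}}^{q+1}+\|w\|_{L^{q+1}}^{q+1})\,ds\le C_{q+1}\int_{s_0}^t e^{(q+1)s}\|u+v\|_{L^{q+1}}^{q+1}\,ds+C$. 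Without this exponential-weight time integration you cannot close the loop back to $\int u^{q+1}+\int v^{q+1}$ with the constant $C_{q+1}$.

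Second, the precise constant $\tfrac{2(N-2)}{N}C_{\frac{N}{2}+1}^{1/(\frac{N}{2}+1)}$ does not come from ``passing to the appropriate fractional power'' of the max-reg bound. The paper uses Young's inequality with a free parameter $\lambda_0>0$ to split $\tfrac{q-1}{q}\chi\int u^q|\Delta w|\le \lambda_0\int u^{q+1}+A_1\lambda_0^{-q}\chi^{q+1}\int|\Delta w|^{q+1}$, applies max-reg (picking up an additional factor $2^{q}$ from $(u+v)^{q+1}\le 2^q(u^{q+1}+v^{q+1})$, since both $u$- and $v$-equations feed into the same $w$-equation), and then \emph{minimises} the total coefficient $\lambda_0+A_1\lambda_0^{-q}(2\chi)^{q+1}C_{q+1}$ over $\lambda_0$; a dedicated calculus lemma shows the minimum is exactly $\tfrac{2(q-1)}{q}C_{q+1}^{1/(q+1)}\chi$. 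Setting $q=q_0>\tfrac{N}{2}$ close to $\tfrac{N}{2}$ then yields the stated threshold. Your treatment of $rp\int uv^p$ via an optimised Young split is correct and is precisely what the paper does; note that the resulting contribution $r(1/q)^{1/(q+1)}\tfrac{q}{q+1}$ lands symmetrically on both the $u^{q+1}$ and $v^{q+1}$ integrals, which is why the $r$-term appears in the bound for \emph{both} $\mu_1$ and $\mu_2$.

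For the case $r_1,r_2>2$, your intuition is right and the paper's argument follows the same spirit, but be aware that it tests $u$ and $v$ with \emph{different} exponents $p$ and $q$, chosen large enough that $\tfrac{q+r_2-1}{r_2-1}<p+r_1-1$ and $\tfrac{p+r_1-1}{r_1-1}<q+r_2-1$, and again uses the exponential-weight time-integration trick before applying maximal regularity.
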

\begin{remark}
(Notes on the global existence of \textbf{classical solution})\\
(1) When $r_{1}=r_{2}=2$, we give a unified lower bound of two logistic damping rates $\mu^{\star}(\chi_{1},\chi_{2},r,N)$ to ensure the global existence and boundedness of classical solution in arbitrary higher-dimensional $(N\geq3)$ nonconvex domains. This global existence result extends the previous 3-D result in \cite{LT2021} and 4/5-D result in \cite{ZX2023}.\\
(2) The lower bound of $\mu_{1}$ or $\mu_{2}$ may not depend on $r$ by a small modification in the application of Young's inequality. Obviously, our result improves the results of \cite{LT2021} and \cite{ZX2023}, which the assumption $\mu_{1}\mu_{2}^{2}>\frac{4}{27}r^{3}$ is intrinsically required and the lower bounds of $\mu_{1}$ and $\mu_{2}$ depend on $\varepsilon$, respectively.\\
(3) Our result improves upon Theorem 1.2 of \cite{SY2025} in two key aspects: we provide a more precise description of $\mu^{\star}$, and we remove the strict constraints $r_{1}\geq r_{2}$ and $r_{i}\leq1+\frac{2(N+2)}{N}$ ($i=1,2$) required in \cite{SY2025}.\\
(4) In \cite{ZZ2024}, the global existence of classical solution to \eqref{model} has been proved under the conditions that  $r_{i}>\max\left\{2,\frac{N}{2}+1\right\}$ or $r_{i}>2+\frac{N-2}{N+2}$ $(i=1,2)$, while these assumptions are relaxed to $r_{i}>2$ in Theorem \ref{theorem1}.
\end{remark}

The following result reveals the global existence of weak solution for arbitrary $\mu_{i}>0$. To begin with, we need to introduce the concept of weak solution.
\begin{definition}\label{weak solution}
Let $\Omega\subset\mathbb R^{N}$ $(N\geq3)$ with smooth boundary $\partial\Omega$, and suppose that the initial data satisfies \eqref{initial}. Then a triple $(u,v,w)$ of nonnegative functions
\begin{equation}\label{weak}
\left\{\aligned
&u\in L^{2}_{loc}([0,\infty);L^{2}(\Omega)),
&&\\
&v\in L^{2}_{loc}([0,\infty);L^{2}(\Omega)),
&&\\
&w\in L^{1}_{loc}([0,\infty);W^{1,1}(\Omega))
\endaligned\right.
\end{equation}
will be called a global weak solution of \eqref{model} if
\begin{equation}
\nabla u, \ \ \nabla v, \ \ u \nabla w \ \ \mbox{and}\ \ v \nabla w \ \ \mbox{belong to} \ \ L^{1}_{loc}(\overline{\Omega}\times[0,\infty);\mathbb{R}^{N}),
\end{equation}
and if the identities
\begin{equation}\label{i1}
-\int_{0}^{T}\int_{\Omega}u\varphi_{t}-\int_{\Omega}u_{0}\varphi(\cdot, 0)=-\int_{0}^{T}\int_{\Omega}\nabla u\cdot\nabla\varphi+\chi_{1}\int_{0}^{T}\int_{\Omega}u\nabla w\cdot\nabla\varphi+\int_{0}^{T}\int_{\Omega}(w-\mu_{1}u^{2})\varphi,
\end{equation}
\begin{equation}\label{i2}
-\int_{0}^{T}\int_{\Omega}v\varphi_{t}-\int_{\Omega}v_{0}\varphi(\cdot, 0)=-\int_{0}^{T}\int_{\Omega}\nabla v\cdot\nabla\varphi+\chi_{2}\int_{0}^{T}\int_{\Omega}v\nabla w\cdot\nabla\varphi+\int_{0}^{T}\int_{\Omega}(w+ruv-\mu_{2}v^{2})\varphi
\end{equation}
as well as
\begin{equation}\label{i3}
-\int_{0}^{T}\int_{\Omega}w\varphi_{t}-\int_{\Omega}w_{0}\varphi(\cdot, 0)=-\int_{0}^{T}\int_{\Omega}\nabla w\cdot\nabla\varphi+\int_{0}^{T}\int_{\Omega}(u+v-w)\varphi
\end{equation}
hold for each $\varphi\in C_{0}^{\infty}(\overline{\Omega}\times[0,\infty))$.
\end{definition}
\begin{theorem}\label{theorem2}
Let $\Omega\subset\mathbb R^{N}$ $(N\geq3)$ be a bounded domain with smooth boundary and let the initial data satisfies \eqref{initial}. Suppose that $r_{1}=r_{2}=2$. Then for any $\mu_{i}>0$ $(i=1,2)$, the system \eqref{model} admits at least one global weak solution $(u,v,w)$ in the sense of Definition \ref{weak solution}.
\end{theorem}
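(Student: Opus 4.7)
The plan is to establish the weak solution via a vanishing-truncation regularization combined with a compactness argument in the spirit of Lankeit \cite{L2015} and Zheng \emph{et al.} \cite{ZL2018}, adapted to the present three-component structure. For each $\varepsilon\in(0,1)$ I would replace the chemotactic drifts $u\nabla w$ and $v\nabla w$ by the bounded truncations $\frac{u}{1+\varepsilon u}\nabla w$ and $\frac{v}{1+\varepsilon v}\nabla w$, and regularize the bilinear coupling $ruv$ by $\frac{ruv}{1+\varepsilon(u+v)}$. A standard fixed-point/continuation argument then produces, for each $\varepsilon$, a global classical solution $(u_\varepsilon,v_\varepsilon,w_\varepsilon)$ to the $\varepsilon$-regularized system, since the chemotactic fields are now bounded while the logistic dissipation is preserved.

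Next I would derive uniform-in-$\varepsilon$ a priori estimates. Integrating the $u$- and $v$-equations and invoking $r_{i}\geq 2$ with Young's inequality yields $u_\varepsilon\in L^{r_{1}}(0,T;L^{r_{1}}(\Omega))$ and $v_\varepsilon\in L^{r_{2}}(0,T;L^{r_{2}}(\Omega))$, hence in particular $u_\varepsilon,v_\varepsilon\in L^{2}(0,T;L^{2}(\Omega))$. Testing the $w$-equation with $w_\varepsilon$ produces $w_\varepsilon\in L^{\infty}(0,T;L^{2})\cap L^{2}(0,T;H^{1})$, and maximal parabolic Sobolev regularity with right-hand side $u_\varepsilon+v_\varepsilon\in L^{2}_{t,x}$ upgrades this to $w_\varepsilon\in L^{2}(0,T;W^{2,2})$ with $\partial_{t}w_\varepsilon\in L^{2}(0,T;L^{2})$, so that $\nabla w_\varepsilon\in L^{2}(0,T;L^{2N/(N-2)}(\Omega))$ by Sobolev embedding. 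To secure the compactness required for the nonlinear terms I would then test the first and second equations with $\ln(u_\varepsilon+1)$ and $\ln(v_\varepsilon+1)$; the resulting dissipation $\int\frac{|\nabla u_\varepsilon|^{2}}{u_\varepsilon+1}$ and its $v$-analogue, combined with the coupled terms involving $\nabla w_\varepsilon$ and absorbed through the logistic penalty $\mu_{i}u_\varepsilon^{r_{i}}\ln(u_\varepsilon+1)$ via Young's inequality, should deliver uniform $L^{2}_{t,x}$ bounds on $\nabla\sqrt{u_\varepsilon+1}$ and $\nabla\sqrt{v_\varepsilon+1}$.

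Testing the weak forms of the $\varepsilon$-equations against smooth $\varphi$ and using the above then bounds $\partial_{t}u_\varepsilon$ and $\partial_{t}v_\varepsilon$ in $L^{1}(0,T;(W^{k,p}(\Omega))^{*})$ for suitable $k,p$, where the chemotactic contribution reduces by duality to controlling $\|u_\varepsilon\nabla w_\varepsilon\|_{L^{1}(L^{N/(N-1)})}$ via the Hölder product of $u_\varepsilon\in L^{2}(L^{2})$ and $\nabla w_\varepsilon\in L^{2}(L^{2N/(N-2)})$. An application of the Aubin--Lions lemma then extracts a subsequence along which $u_\varepsilon\to u$ and $v_\varepsilon\to v$ strongly in $L^{2}_{loc}$ (and a.e.), while $w_\varepsilon\to w$ in $L^{2}_{loc}(W^{1,2})$ and $\nabla w_\varepsilon\rightharpoonup\nabla w$ weakly in $L^{2}_{loc}(L^{2N/(N-2)})$. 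Because the truncation factors $(1+\varepsilon u_\varepsilon)^{-1},(1+\varepsilon v_\varepsilon)^{-1}$ converge a.e. to $1$ and are uniformly bounded by $1$, the products $\tfrac{u_\varepsilon}{1+\varepsilon u_\varepsilon}\nabla w_\varepsilon$ and $\tfrac{v_\varepsilon}{1+\varepsilon v_\varepsilon}\nabla w_\varepsilon$ converge to $u\nabla w$ and $v\nabla w$ in $L^{1}_{loc}$, while $\tfrac{ru_\varepsilon v_\varepsilon}{1+\varepsilon(u_\varepsilon+v_\varepsilon)}\to ruv$ in $L^{1}_{loc}$ by strong $L^{2}$ convergence of both densities, so that the limit triple $(u,v,w)$ satisfies \eqref{i1}--\eqref{i3}.

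The hardest step will be the entropy estimate producing strong compactness of $u_\varepsilon$ and $v_\varepsilon$ under the sole assumption $\mu_{i}>0$. Since no size restriction on $\mu_{i}$ is available, every chemotactic cross term arising from testing with $\ln(u_\varepsilon+1)$ or $\ln(v_\varepsilon+1)$ must be absorbed jointly by the dissipation $\int|\nabla w_\varepsilon|^{2}$ and the logistic penalty, and this balance has to be realized for both species simultaneously because the nonlinear production $ruv$ demands strong $L^{2}_{loc}$ convergence of both $u_\varepsilon$ and $v_\varepsilon$ at once. Accommodating the symmetric cross-interaction of two T-cell species transported by the same gradient $\nabla w$ in arbitrary dimension $N\geq 3$ is therefore the principal technical difficulty.
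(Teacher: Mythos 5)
Your overall strategy is the same as the paper's: a Lankeit-type bounded truncation of the chemotactic flux, uniform $L^1$ and space--time $L^2$ estimates, a logarithmic entropy estimate, dual-space bounds on the time derivatives, and Aubin--Lions. But there is a genuine gap at exactly the step you yourself flag as the hardest: you never say how $r\int_\Omega u_\varepsilon v_\varepsilon\ln v_\varepsilon$, arising from testing the second equation with $\ln v_\varepsilon$, is to be absorbed when $\mu_2>0$ is arbitrarily small. A direct Young splitting produces a term $c(r)\int_\Omega v_\varepsilon^2\ln v_\varepsilon$ that cannot be dominated by $-\mu_2\int_\Omega v_\varepsilon^2\ln v_\varepsilon$ without a smallness condition relating $r$ and $\mu_2$, which is precisely what the theorem forbids. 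The paper's resolution is a domain decomposition: with $k=\frac{r}{\mu_2}+1$ one estimates
\begin{equation*}
r\int_\Omega u_\varepsilon v_\varepsilon\ln v_\varepsilon
= r\int_{\{ku_\varepsilon\ge v_\varepsilon\}}+r\int_{\{ku_\varepsilon< v_\varepsilon\}}
\le rk\int_\Omega u_\varepsilon^2\ln (ku_\varepsilon)+\frac{r}{k}\int_\Omega v_\varepsilon^2\ln v_\varepsilon,
\end{equation*}
so that the second piece is strictly dominated by $-\mu_2\int_\Omega v_\varepsilon^2\ln v_\varepsilon$ since $\frac{r}{k}<\mu_2$, while the first piece is absorbed by adding $L$ times the $u$-entropy inequality (which carries $-L\mu_1\int_\Omega u_\varepsilon^2\ln u_\varepsilon$) with $L=\frac{rk(1+\ln k)}{\mu_1}+1$. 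Some such device is indispensable; without it the entropy estimate does not close for small $\mu_i$.

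A second, related gap concerns the chemotactic term inside the entropy estimate. Absorbing it by Young's inequality against the dissipation $\int_\Omega|\nabla u_\varepsilon|^2/(u_\varepsilon+1)$ leaves a remainder of the form $\int_\Omega u_\varepsilon|\nabla w_\varepsilon|^2$, which is \emph{not} controlled by your a priori bounds ($u_\varepsilon$ is only uniformly in $L^2_{t,x}\cap L^\infty_tL^1_x$ and $|\nabla w_\varepsilon|^2$ only in $L^1_tL^{N/(N-2)}_x$; one also cannot trade it for $\int|\nabla w_\varepsilon|^4$, which is unavailable). The working route, used in the paper, is to write the term as $\chi_1\int_\Omega\nabla G_\varepsilon(u_\varepsilon)\cdot\nabla w_\varepsilon$ with $G_\varepsilon(s)=\int_0^s\frac{\sigma F_\varepsilon(\sigma)}{1+\sigma}\,d\sigma\le s$, integrate by parts onto $\Delta w_\varepsilon$, and bound $\chi_1\int_\Omega u_\varepsilon|\Delta w_\varepsilon|\le\varepsilon_2\int_\Omega u_\varepsilon^2+C\int_\Omega|\Delta w_\varepsilon|^2$, both of which are space--time integrable because $\Delta w_\varepsilon$ is uniformly bounded in $L^2_{t,x}$. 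Finally, a smaller inaccuracy: Aubin--Lions with $u_\varepsilon$ bounded in $L^{4/3}_tW^{1,4/3}_x$ and $u_{\varepsilon t}$ in $L^1_t\bigl((W^{2,q})^*\bigr)$ gives strong precompactness only in $L^{4/3}_{loc}$ (plus a.e.\ convergence), not strong $L^2_{loc}$ convergence as you assert; the passage to the limit in $ru_\varepsilon v_\varepsilon$ and in the drift terms must therefore combine a.e.\ convergence with weak $L^2$ convergence and a Vitali/Egorov argument rather than strong $L^2$ convergence of both factors.
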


\begin{remark} (Notes on the global existence of \textbf{weak solution})\\
(1) To the best of our knowledge, Theorem \ref{theorem2} is the first result concerning the weak solution theory for system \eqref{model}.\\
(2) Although our three-component system is more complex than the minimal KS system, we still establish the global existence of weak solution in higher dimensions $(N\geq3)$ for any small values of $\mu_{i}>0$ $(i=1,2)$. Specifically, the proof does not require the restriction on the convexity of $\Omega$.\\
(3) The eventual smoothness of global weak solution will be studied in our future work.
\end{remark}

The remainder of this paper is structured as follows. In Section 2, we review some preliminary results that are essential to our subsequent analysis. In Section 3, we shall show the global solvability of the model \eqref{model} under two distinct logistic damping roles via testing procedure and the maximal Sobolev regularity argument. In the case of $r_{1}=r_{2}=2$ and $\min\{\mu_{1},\mu_{2}\}>\mu^{\ast}$, we raise the \emph{a priori} estimates of solutions from $L^{1}(\Omega)\rightarrow L^{\frac{N}{2}}(\Omega)\rightarrow L^{\frac{N}{2}+\varepsilon}(\Omega)\rightarrow L^{p}(\Omega)$ (for any $p>1$) by means of a crucial auxiliary Lemma \ref{a}. (see Lemmas \ref{lemmaLp0} and \ref{lemmaLp}). In the case of $r_{i}>2$ ($i=1,2$), we develop $L^{p}$-estimate of $u$ and $L^{q}$-estimate of $v$ by leveraging the characteristic of generalized logistic source (see Lemma \ref{p-q}). These estimates enable us to derive the boundedness results according to a Moser-type iterative argument. Section 4 is devoted to proving the global existence of weak solution in the sense of Definition \ref{weak solution}. First, we employ the standard $L^{p}$ testing procedure to establish the global solvability of the regularized problem \eqref{weak model} in the classical sense (see Lemma \ref{W3}). Based on some $\varepsilon$-independent estimates, we further obtain essential spatial-temporal estimates and derive several regularity results for time derivatives (see Lemmas \ref{W2} and \ref{W4}), which allow us to prove specific compactness properties via an Aubin-Lions type lemma. Finally, we complete the proof of Theorem \ref{theorem2} through an appropriate limit procedure.\\
\textbf{Notations.} Throughout this paper, various positive constants are denoted by $C$, $C_{\ast}$, $C_{\ast\ast}$ or $C_{i}$ $(i=1, 2, \cdot\cdot\cdot)$. Moreover, we omit the spatial integration symbol $dx$ for brevity.

\section{Preliminaries}
\setcounter{equation}{0}
\indent

This section contains several lemmas that play an important role in our \emph{a priori} estimates. We start with the widely applied Gagliardo-Nirenberg interpolation inequality.

\begin{lemma}\label{GN}
(Gagliardo-Nirenberg inequality \cite{N1966, XZ2019}) Let $\Omega\subset\mathbb R^{N}$ be a bounded domain with smooth boundary. Suppose $p\geq1$ and $q\in(0,p]$. Then there exists a positive constant $C_{GN}=C(p,q,N,\Omega)$ such that
\begin{center}
$\|w\|_{L^{p}(\Omega)}\leq C_{GN}(\|\nabla w\|_{L^{2}(\Omega)}^{\alpha}\|w\|_{L^{q}(\Omega)}^{1-\alpha}+\|w\|_{L^{q}(\Omega)})$.
\end{center}
for any functions $w\in H^{1}(\Omega)\cap L^{q}(\Omega)$, where $\alpha$ is given by
\begin{center}
$\alpha=\frac{\frac{1}{p}-\frac{1}{q}}{\frac{1}{2}-\frac{1}{N}-\frac{1}{q}}\in(0,1)$.
\end{center}
\end{lemma}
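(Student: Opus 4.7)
The final statement is the Gagliardo--Nirenberg interpolation inequality, which is a classical cornerstone of analysis going back to Nirenberg's 1966 paper, so the natural plan is to cite the result rather than rebuild it. That is what the authors clearly intend, and in any chemotaxis paper this lemma is stated only to fix notation (the constant $C_{GN}$ and the interpolation exponent $\alpha$) for use in subsequent energy estimates. So my proof proposal is essentially: invoke \cite{N1966} for the full space result together with \cite{XZ2019} for the formulation on a bounded smooth domain, and note that the scaling relation
\begin{equation*}
\frac{1}{p}=\alpha\left(\frac{1}{2}-\frac{1}{N}\right)+(1-\alpha)\frac{1}{q}
\end{equation*}
with $\alpha\in[0,1]$ determines the admissible exponent pairs $(p,q)$ implicitly.

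If a self-contained sketch were required, I would proceed in three steps. First, I would reduce to the whole space $\mathbb R^{N}$ by means of a Stein-type extension operator $E\colon W^{1,2}(\Omega)\to W^{1,2}(\mathbb R^{N})$, which exists because $\partial\Omega$ is smooth, and which satisfies $\|Ew\|_{L^{s}(\mathbb R^{N})}\lesssim \|w\|_{L^{s}(\Omega)}$ together with the analogous bound on gradients. Second, on $\mathbb R^{N}$ I would combine the Sobolev embedding $W^{1,2}(\mathbb R^{N})\hookrightarrow L^{2N/(N-2)}(\mathbb R^{N})$ (with the obvious modification for $N=2$) with H\"older's inequality between $L^{2N/(N-2)}$ and $L^{q}$ to obtain
\begin{equation*}
\|f\|_{L^{p}(\mathbb R^{N})}\le C\|\nabla f\|_{L^{2}(\mathbb R^{N})}^{\alpha}\|f\|_{L^{q}(\mathbb R^{N})}^{1-\alpha}
\end{equation*}
for all $f\in C_{c}^{\infty}(\mathbb R^{N})$, where $\alpha$ is fixed by the scaling relation above; the density of $C_{c}^{\infty}$ in $W^{1,2}$ then extends this to the required function class. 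Third, transferring back to $\Omega$ via the extension would multiply the right-hand side by constants depending only on $\Omega$; the additive $\|w\|_{L^{q}(\Omega)}$ correction term then naturally appears because the extension bound $\|\nabla(Ew)\|_{L^{2}(\mathbb R^{N})}\lesssim \|\nabla w\|_{L^{2}(\Omega)}+\|w\|_{L^{2}(\Omega)}$ has such a zero-order contribution, which a further H\"older interpolation converts into the $L^{q}$ form.

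There is no genuine obstacle here beyond standard bookkeeping: the only subtlety is distinguishing the sharp Nirenberg scaling condition from the admissible range for $\alpha$, which for $q\le p$ as assumed lies in $[0,1]$ precisely when the left-hand Lebesgue exponent $p$ respects the Sobolev upper bound $2N/(N-2)$ (with suitable modifications for small $N$). Since the paper will apply the inequality only to explicit triples $(p,q,\alpha)$ arising from $L^{p}$-testing of the PDE system, these admissibility conditions will be verified inline as needed. Accordingly, no further proof of Lemma \ref{GN} is warranted in this section, and I would end the paragraph by simply pointing to \cite{N1966,XZ2019}.
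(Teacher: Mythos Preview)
Your proposal is correct and matches the paper's treatment: the lemma is stated with citations to \cite{N1966, XZ2019} and given no proof in the paper, exactly as you anticipated. Your additional sketch via extension operators and Sobolev embedding is standard and sound, but strictly more than the authors provide.
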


The selection of certain $\varepsilon$ in Young's inequality within Lemma \ref{lemmaLp0} relies on calculating the minimum value of the following function.
\begin{lemma}\label{a}
Let
\begin{equation*}
A_{1}=\frac{1}{\delta+1}\left(\frac{\delta+1}{\delta}\right)^{-\delta}\left(\frac{\delta-1}{\delta}\right)^{\delta+1}
\end{equation*}
with any fixed $\delta>1$. Suppose
\begin{equation*}
H(y)=y+A_{1}y^{-\delta}(2\chi)^{\delta+1}C_{\delta+1}
\end{equation*}
for $y>0$, where some fixed constants $\chi>0$ and $C_{\delta+1}>0$. Then one has
\begin{equation}
\min \limits_{y>0}H(y)=\frac{2(\delta-1)}{\delta}(C_{\delta+1})^{\frac{1}{\delta+1}}\chi.
\end{equation}
\end{lemma}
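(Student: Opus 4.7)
The plan is to minimize the one-variable function $H$ on $(0,\infty)$ by elementary calculus and then simplify the resulting closed form using the explicit definition of $A_1$. I first note that for $\delta = 1$ we have $A_1 = 0$, so $H(y) = y$ has infimum $0$ over $(0,\infty)$, matching the right-hand side; thus I restrict attention to the nontrivial case $\delta > 1$, where $A_1 > 0$.

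I would begin by differentiating to obtain
\begin{equation*}
H'(y) = 1 - \delta A_1 (2\chi)^{\delta+1} C_{\delta+1}\, y^{-\delta-1}.
\end{equation*}
This $H'$ is strictly increasing on $(0,\infty)$ and vanishes at the unique point
\begin{equation*}
y_{*} = \bigl(\delta A_1 (2\chi)^{\delta+1} C_{\delta+1}\bigr)^{1/(\delta+1)}.
\end{equation*}
Since $H(y)\to\infty$ as $y\to 0^{+}$ and as $y\to\infty$, this $y_{*}$ is the global minimizer. The critical point equation can be rewritten as $A_1 (2\chi)^{\delta+1} C_{\delta+1} = y_{*}^{\delta+1}/\delta$, which gives $A_1 y_{*}^{-\delta}(2\chi)^{\delta+1} C_{\delta+1} = y_{*}/\delta$, and therefore
\begin{equation*}
H(y_{*}) = y_{*} + \frac{y_{*}}{\delta} = \frac{\delta+1}{\delta}\, y_{*}.
\end{equation*}

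It remains to compute $y_{*}$ in closed form. Using $\bigl(\tfrac{\delta+1}{\delta}\bigr)^{-\delta} = \bigl(\tfrac{\delta}{\delta+1}\bigr)^{\delta}$ and grouping the $(\delta+1)$-th powers, I would verify that
\begin{equation*}
\delta A_1 = \frac{\delta}{\delta+1}\Bigl(\frac{\delta}{\delta+1}\Bigr)^{\delta}\Bigl(\frac{\delta-1}{\delta}\Bigr)^{\delta+1} = \Bigl(\frac{\delta-1}{\delta+1}\Bigr)^{\delta+1},
\end{equation*}
so that $y_{*} = \frac{2(\delta-1)}{\delta+1}\,\chi\, C_{\delta+1}^{1/(\delta+1)}$. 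Multiplying by $(\delta+1)/\delta$ produces exactly the claimed value $\frac{2(\delta-1)}{\delta}\,\chi\, C_{\delta+1}^{1/(\delta+1)}$. I do not anticipate any substantive obstacle; the only point requiring care is the algebraic simplification of $\delta A_1$, where the telescoping of the $(\delta+1)$-th powers and the cancellation of the factor $\delta/(\delta+1)$ against the leading $1/(\delta+1)$ in the definition of $A_1$ must line up precisely.
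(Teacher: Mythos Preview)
Your proof is correct and follows essentially the same approach as the paper: differentiate $H$, locate the unique critical point, and evaluate. Your presentation is slightly more careful (separating the degenerate case $\delta=1$ and first reducing $H(y_{*})$ to $\tfrac{\delta+1}{\delta}y_{*}$ before simplifying $y_{*}$), but the underlying argument is identical.
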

\begin{proof}
We calculate that
\begin{equation*}
H'(y)=1-A_{1}\delta C_{\delta+1}\left(\frac{2\chi}{y}\right)^{\delta+1}.
\end{equation*}
Then letting $H'(y)=0$, we have
\begin{equation}\label{12345}
y=2(A_{1}\delta C_{\delta+1})^{\frac{1}{\delta+1}}\chi.
\end{equation}
Since $\lim\limits_{y\rightarrow0^{+}}H(y)=+\infty$ and $\lim\limits_{y\rightarrow+\infty}H(y)=+\infty$, from \eqref{12345} we derive that
\begin{equation*}
\begin{aligned}
\min \limits_{y>0}H(y)=H[2(A_{1}\delta C_{\delta+1})^{\frac{1}{\delta+1}}\chi]=&2(A_{1}C_{\delta+1})^{\frac{1}{\delta+1}}(\delta^{\frac{1}{\delta+1}}+\delta^{-\frac{\delta}{\delta+1}})\chi\\
&=\frac{2(\delta-1)}{\delta}(C_{\delta+1})^{\frac{1}{\delta+1}}\chi.
\end{aligned}
\end{equation*}
\end{proof}

We then show a boundedness property for solutions to an auxiliary differential inequality.
\begin{lemma}\label{c} (\cite{ZK2021}) Let $T>0$, $\tau\in(0,T)$, $\alpha>0$ and $B>0$. Suppose that $z: [0,T)\rightarrow[0,\infty)$ is absolutely continuous and satisfies
\begin{equation*}
z'(t)+Az^{\alpha}(t)\leq h(t)\ \ \ \mbox{for a.e.}\ t\in(0,T)
\end{equation*}
with some nonnegative function $h\in L_{loc}^{1}([0,T))$. If
\begin{equation*}
\int_{t}^{t+\tau}h(s)ds\leq B\ \ \ \mbox{for all}\ t\in(0,T-\tau),
\end{equation*}
then one can find a positive constant $C=\max \left\{z_{0}+B,\frac{1}{\tau^{\frac{1}{\alpha}}}(\frac{B}{A})^{\frac{1}{\alpha}}+2B \right\}$ such that
\begin{equation*}
z(t)\leq C\ \ \ \mbox{for all}\ t\in(0,T).
\end{equation*}
\end{lemma}

The following statement about the maximal Sobolev regularity theory is a powerful tool for estimating $\int_{\Omega}|\Delta w|^{p}+\int_{\Omega}w^{p}$.
\begin{lemma}\label{b}
(cf. \cite{KZ2018}, \cite{ZL2018}) Let $\gamma\in(1,+\infty)$ and $g\in L^{\gamma}((0,T); L^{\gamma}(\Omega))$. Consider the following initial boundary problem:
\begin{equation*}
\left\{\aligned
&v_{t}-\Delta v+v=g,&&(x,t)\in\Omega\times(0,T),
&&\\
&\frac{\partial v}{\partial \nu}=0,&&(x,t)\in\partial\Omega\times(0,T),
&&\\
&v(x,0)=v_{0}(x),&&x\in\Omega.
\endaligned\right.
\end{equation*}
For each $v_{0}\in W^{2,\gamma}(\Omega)$ with $\frac{\partial v_{0}}{\partial\nu}=0$, there exists a unique solution $v\in W^{1,\gamma}((0,T);L^{\gamma}(\Omega))\cap L^{\gamma}((0,T);W^{2,\gamma}(\Omega))$. Moreover, if $s_{0}\in[0,T)$ and $v_{0}\in W^{2,\gamma}(\Omega)$ with $\frac{\partial v_{0}}{\partial\nu}=0$, then there exists a positive constant $C_{\gamma}$, which depends continuously on $\gamma$, such that
\begin{equation}\label{b1}
\int_{s_{0}}^{T}e^{\gamma s}\|v(\cdot,s)\|^{\gamma}_{W^{2,\gamma}(\Omega)}ds
\leq C_{\gamma}\left(\int_{s_{0}}^{T}e^{\gamma s}\|g(\cdot,s)\|^{\gamma}_{L^{\gamma}(\Omega)}ds+e^{\gamma s_{0}}\|v(\cdot,s_{0})\|^{\gamma}_{W^{2,\gamma}(\Omega)}\right).
\end{equation}
\end{lemma}
\begin{proof}
It remains to establish the continuous dependence of $C_\gamma$ on $\gamma$. Let $u(x,t)=e^t v(x,t)$ and $f(x,t)=e^t g(x,t)$. Then $u$ satisfies
\begin{equation}\label{pp}
\left\{\aligned
&u_t-\Delta u=f,&&(x,t)\in \Omega\times(s_0,T),
&&\\
&\frac{\partial u}{\partial \nu}=0,&&(x,t)\in\partial\Omega\times(s_0,T),
&&\\
&u(x,s_0)=a(x),&&x\in\Omega.
\endaligned\right.
\end{equation}
For fixed $s_0$ and $T$, we define
\begin{equation*}
\mathcal S_\gamma:(f,a)\mapsto u
\end{equation*}
as the solution operator associated with \eqref{pp}. More precisely, \(\mathcal S_\gamma\) maps
\begin{equation*}
X_\gamma:=L^\gamma((s_0,T);L^\gamma(\Omega))\times W^{2,\gamma}_\nu(\Omega)
\end{equation*}
into
\begin{equation*}
Y_\gamma:=L^\gamma((s_0,T);W^{2,\gamma}(\Omega)),
\end{equation*}
where
\begin{equation*}
W^{2,\gamma}_\nu(\Omega)
:=\left\{
\phi\in W^{2,\gamma}(\Omega):\frac{\partial \phi}{\partial\nu}=0\ \text{on } \partial\Omega\right\}.
\end{equation*}
By the parabolic maximal regularity estimate, one has
\begin{equation}\label{ppp}
\|\mathcal S_\gamma(f,a)\|_{Y_\gamma}
\leq K_\gamma
\left(\|f\|_{L^\gamma((s_0,T);L^\gamma(\Omega))}+\|a\|_{W^{2,\gamma}(\Omega)}\right)\ \ \ \mbox{for all}\ \gamma\in(1,\infty)
\end{equation}
with $K_\gamma>0$. Let $1<\gamma_1<\gamma_2<\infty$, $0<\theta<1$, and define
$\gamma$ by
\begin{equation*}
\frac1\gamma=
\frac{1-\theta}{\gamma_1}+\frac{\theta}{\gamma_2}.
\end{equation*}
The complex interpolation identities
\begin{equation*}
[L^{\gamma_1}(\Omega),L^{\gamma_2}(\Omega)]_\theta
=L^\gamma(\Omega), \ \ [W^{2,\gamma_1}(\Omega),W^{2,\gamma_2}(\Omega)]_\theta
=W^{2,\gamma}(\Omega)
\end{equation*}
together with the corresponding interpolation property of the Neumann subspaces $W^{2,\gamma}_\nu(\Omega)$ provide a constant $M(\gamma_1,\gamma_2,\theta)>0$ fulfilling
\begin{equation}\label{pppp}
\|\mathcal S_\gamma\|_{X_\gamma\to Y_\gamma}
\leq
M(\gamma_1,\gamma_2,\theta)
\,
\|\mathcal S_{\gamma_1}\|_{X_{\gamma_1}\to Y_{\gamma_1}}^{1-\theta}
\,
\|\mathcal S_{\gamma_2}\|_{X_{\gamma_2}\to Y_{\gamma_2}}^\theta.
\end{equation}
Based on \eqref{ppp}, we deduce from \eqref{pppp} that on every compact
subinterval of $(1,\infty)$, $\|\mathcal S_\gamma\|_{X_\gamma\to Y_\gamma}$ are locally bounded. Consequently, after enlarging the constants if necessary, one may choose a continuous function $\widetilde K_\gamma$ such that
\begin{equation}\label{ppppp}
\|\mathcal S_\gamma\|_{X_\gamma\to Y_\gamma}
\leq\widetilde K_\gamma \ \ \ \mbox{for all}\ \gamma\in(1,\infty).
\end{equation}
According to the elementary inequality $(A+B)^\gamma\leq 2^{\gamma-1}(A^\gamma+B^\gamma)$ and the definitions of $u(x,t)$, $f(x,t)$ and $a(x)$, the combination of \eqref{ppp} and \eqref{ppppp} yields that
\begin{equation*}
\begin{aligned}
\int_{s_0}^T e^{\gamma s}
\|v(\cdot,s)\|_{W^{2,\gamma}(\Omega)}^\gamma\,ds
&\leq2^{\gamma-1}\widetilde K_\gamma^\gamma
\left(\int_{s_0}^T e^{\gamma s}
\|g(\cdot,s)\|_{L^\gamma(\Omega)}^\gamma\,ds+e^{\gamma s_0}\|v(\cdot,s_0)\|_{W^{2,\gamma}(\Omega)}^\gamma\right).
\end{aligned}
\end{equation*}
Thus \eqref{b1} holds with $C_\gamma:=2^{\gamma-1}\widetilde K_\gamma^\gamma$. Since \(\widetilde K_\gamma\) is chosen to be continuous in \(\gamma\), it follows that $\gamma\mapsto C_\gamma$ is continuous on $(1,\infty)$. This proves the additional assertion.
\end{proof}

In order to improve the regularity of $w$, we present the following useful estimates.
\begin{lemma}\label{d}
(\cite{HW2005},\cite{KS2008}) Let $\gamma\in(1,+\infty)$ and $g\in L^{\infty}((0,T_{max});L^{\gamma}(\Omega))$. Suppose that $v$ is a solution of the initial boundary problem
\begin{equation*}
\left\{\aligned
&v_{t}-\Delta v+v=g,
&&\\
&\frac{\partial v}{\partial \nu}=0,
&&\\
&v(x,0)=v_{0}(x).
\endaligned\right.
\end{equation*}
Then there exists a positive constant $C$ independent of $t$ such that
\begin{equation*}
\|v(\cdot,t)\|_{W^{1,q}(\Omega)}\leq C\ \ \ \mbox{for all}\ t \in(0,T_{max}),
\end{equation*}
where
\begin{equation*}
q\in\begin{cases}[1,\frac{N\gamma}{N-\gamma})&\mbox{if} \ \ \gamma\leq N,\\
[1,\infty]& \mbox{if} \ \ \gamma> N. \end{cases}
\end{equation*}
\end{lemma}

\section{Global existence of the classical solution}
\setcounter{equation}{0}
\indent

In this section, we shall prove the system \eqref{model} possesses a global classical solution which is bounded in two cases: (\romannumeral 1) when $r_{1}=r_{2}=2$ with sufficiently large $\mu_{1}$ and $\mu_{2}$, and (\romannumeral 2) when $r_{i}>2$ ($i=1,2$) without restrictions on $\mu_{1}$ and $\mu_{2}$. Our analysis begins with the local existence result for classical solutions, which was previously established in \cite{LT2021}.
\begin{lemma}\label{Local1}
Let $\Omega\subset\mathbb R^{N}$($N\geq3$) be a bounded domain with smooth boundary. For any given initial data $(u_{0},v_{0},w_{0})$ fulfilling \eqref{initial}, there exist a maximal
existence time $T_{max}\in(0,\infty]$ and a unique triple $(u,v,w)$ of nonnegative functions
\begin{equation}\label{local1}
\left\{\aligned
&u\in C^{0}(\overline{\Omega}\times[0,T_{max}))\cap C^{2,1}(\overline{\Omega}\times(0,T_{max})),
&&\\
&v\in C^{0}(\overline{\Omega}\times[0,T_{max}))\cap C^{2,1}(\overline{\Omega}\times(0,T_{max})),
&&\\
&w\in C^{0}(\overline{\Omega}\times[0,T_{max}))\cap C^{2,1}(\overline{\Omega}\times(0,T_{max})),
\endaligned\right.
\end{equation}
which solves \eqref{model} in the classical sense in $\Omega\times(0,T_{max})$. Furthermore, if $T_{max}<\infty$, then one has
\begin{equation}\label{local2}
\|u(\cdot,t)\|_{L^{\infty}(\Omega)}+\|v(\cdot,t)\|_{L^{\infty}(\Omega)}\rightarrow\infty\ \ \mbox{as}\ \  t\nearrow T_{max}.
\end{equation}
\end{lemma}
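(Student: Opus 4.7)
The plan is to prove local existence by a Banach fixed-point argument coupled with parabolic bootstrap, and then obtain the extensibility criterion via a continuation argument, essentially reproducing in arbitrary dimension the three-dimensional construction of \cite{LT2021}.

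First, I would fix $R > 2(\|u_0\|_{L^\infty}+\|v_0\|_{L^\infty}+\|w_0\|_{W^{1,\infty}})+1$ and, for $T\in(0,1]$ to be chosen, introduce the closed set
$$S_T := \{(\tilde u,\tilde v)\in C^0(\overline\Omega\times[0,T])\times C^0(\overline\Omega\times[0,T]) : 0\leq \tilde u,\tilde v\leq R\}$$
equipped with the sup-norm. Given $(\tilde u,\tilde v)\in S_T$, let $w$ solve the linear Neumann problem $w_t-\Delta w+w=\tilde u+\tilde v$ with initial datum $w_0$; Lemma \ref{d} yields $w\in C^0([0,T];W^{1,\infty}(\Omega))$ with norm controlled by $R$, $\|w_0\|_{W^{1,\infty}}$ and $T$. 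Plugging this $w$ into the first two equations of \eqref{model} gives two linear-in-$(u,v)$ Neumann parabolic problems with bounded coefficients and $C^0$-sources, whose solutions are declared to be $\Phi(\tilde u,\tilde v)=(u,v)$. Standard $L^\infty$-semigroup estimates then show that for $T$ small enough (depending only on $R$ and on the data), $\Phi$ sends $S_T$ into itself and is a strict contraction in the sup-norm, using the local Lipschitz continuity of $-\mu_1 u^{r_1}$ and $ruv-\mu_2 v^{r_2}$ on $[0,R]^2$.

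The unique fixed point produces a mild solution with $u,v,w\in C^0(\overline\Omega\times[0,T])$, and nonnegativity follows componentwise from the parabolic maximum principle, since each zero-order source is nonnegative whenever the other unknowns are. A standard bootstrap then upgrades this to a classical solution: starting from $u,v,w\in L^\infty_{\mathrm{loc}}$, Lemma \ref{d} gives $w\in L^\infty_t W^{1,q}_x$ for arbitrary $q<\infty$, so the coefficients in the $u$- and $v$-equations become H\"older continuous, and iterated Schauder estimates deliver $u,v\in C^{2,1}$; feeding this back into the $w$-equation yields $w\in C^{2,1}$ as well. Gluing in the usual way produces a maximal interval $[0,T_{max})$ with the regularity asserted in \eqref{local1}.

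For the extensibility dichotomy \eqref{local2} I would argue by contradiction. If $T_{max}<\infty$ while $\|u(\cdot,t)\|_{L^\infty}+\|v(\cdot,t)\|_{L^\infty}$ remained uniformly bounded on $[0,T_{max})$, then Lemma \ref{d} applied to the third equation with $\gamma$ chosen arbitrarily large would supply a uniform bound on $\|w(\cdot,t)\|_{W^{1,\infty}}$ up to $T_{max}$. These uniform bounds feed back into the fixed-point construction of the first step: the solution at any $t_0<T_{max}$ serves as admissible initial data for which the construction yields an extension on $[t_0,t_0+T']$ with $T'>0$ independent of $t_0$; choosing $t_0>T_{max}-T'$ contradicts the maximality of $T_{max}$. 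The only delicate point in the whole argument is the bookkeeping that ensures the self-mapping radius and the contraction constant can be arranged with the \emph{same} choice of $T$; beyond that the proof is standard parabolic theory and transfers verbatim from the three-dimensional construction in \cite{LT2021}, since the local theory is dimension-insensitive.
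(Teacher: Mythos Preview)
The paper does not actually prove this lemma: it simply states the result and attributes it to \cite{LT2021}, so there is no proof to compare against beyond the citation. Your fixed-point/bootstrap sketch is the standard construction used in that reference and in the broader chemotaxis literature, and it is correct in outline; in effect you have supplied the argument that the paper chose to suppress.
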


In view of \eqref{local1}, there exists a positive constant $K$ such that for any $s_{0}\in(0,T_{max})$ with $s_{0}\leq 1$,
\begin{equation}\label{s0}
\|u(\cdot, \tau)\|_{L^{\infty}(\Omega)}\leq K, \ \ \ \|v(\cdot, \tau)\|_{L^{\infty}(\Omega)}\leq K\ \ \ \mbox{for all}\ \tau\in[0,s_{0}].
\end{equation}

As a starting point for \emph{a priori} estimates, the basic $L^{1}$-property can be derived as below. Although a rigorous proof is available in Lemma 2.2 of \cite{LT2021}, we provide our own version for completeness and to facilitate the subsequent analysis of Lemma \ref{W1}.
\begin{lemma}\label{lemmaL1}
Suppose $r_{i}\geq2$ $(i=1,2)$. Then there exists a constant $C>0$ such that the solution to model \eqref{model} satisfies
\begin{equation}\label{!}
\|u(\cdot, t)\|_{L^{1}(\Omega)}+\|v(\cdot, t)\|_{L^{1}(\Omega)}+\|w(\cdot, t)\|_{L^{1}(\Omega)}\leq C\ \ \ \mbox{for all}\ t \in(0,T_{max}).
\end{equation}
\end{lemma}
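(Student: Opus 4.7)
The plan is to test each of the three equations in \eqref{model} against the constant $1$. Thanks to the homogeneous Neumann boundary conditions, both the diffusive and chemotactic divergence terms vanish, leaving the ODE identities
\begin{align*}
\frac{d}{dt}\int_{\Omega}u&=\int_{\Omega}w-\mu_{1}\int_{\Omega}u^{r_{1}},\\
\frac{d}{dt}\int_{\Omega}v&=\int_{\Omega}w+r\int_{\Omega}uv-\mu_{2}\int_{\Omega}v^{r_{2}},\\
\frac{d}{dt}\int_{\Omega}w&=\int_{\Omega}u+\int_{\Omega}v-\int_{\Omega}w.
\end{align*}

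The delicate point is the cross term $r\int_{\Omega}uv$ in the second identity, which must be absorbed into the logistic dissipations $\mu_{1}\int_{\Omega}u^{r_{1}}$ and $\mu_{2}\int_{\Omega}v^{r_{2}}$. I would proceed with a two-step Young's inequality that relies essentially on $r_{1}\geq 2$ and $r_{2}\geq 2$. First, write $ruv\leq\varepsilon_{1}u^{r_{1}}+C(\varepsilon_{1})\,v^{r_{1}/(r_{1}-1)}$; then, because $r_{1}\geq 2$ forces $r_{1}/(r_{1}-1)\leq 2\leq r_{2}$, further bound $v^{r_{1}/(r_{1}-1)}\leq\varepsilon_{2}v^{r_{2}}+C(\varepsilon_{2})$. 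Choosing $\varepsilon_{1}$ and $\varepsilon_{2}$ small enough reduces the $v$-identity to
\begin{equation*}
\frac{d}{dt}\int_{\Omega}v+\frac{\mu_{2}}{2}\int_{\Omega}v^{r_{2}}\leq \int_{\Omega}w+\frac{\mu_{1}}{2}\int_{\Omega}u^{r_{1}}+C.
\end{equation*}

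Next, I would form the linear combination $y(t):=\int_{\Omega}u+\int_{\Omega}v+k\int_{\Omega}w$ with $k>2$ to be fixed, and add the $u$-identity, the modified $v$-estimate above, and $k$ times the $w$-identity. The $\int_{\Omega}w$ contributions on the right get absorbed by $-k\int_{\Omega}w$ coming from the $w$-equation, yielding
\begin{equation*}
y'(t)+\frac{\mu_{1}}{2}\int_{\Omega}u^{r_{1}}+\frac{\mu_{2}}{2}\int_{\Omega}v^{r_{2}}+(k-2)\int_{\Omega}w\leq k\int_{\Omega}u+k\int_{\Omega}v+C.
\end{equation*}
A final Young's inequality, again using $r_{i}\geq 2$, absorbs $k\int_{\Omega}u$ and $k\int_{\Omega}v$ into $\frac{\mu_{1}}{4}\int_{\Omega}u^{r_{1}}$ and $\frac{\mu_{2}}{4}\int_{\Omega}v^{r_{2}}$ respectively. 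Since the remaining $\frac{\mu_{1}}{4}\int u^{r_{1}}+\frac{\mu_{2}}{4}\int v^{r_{2}}+(k-2)\int w$ dominates $c\,y(t)-C$ for some $c>0$ (once more by Young applied in the reverse direction), the result is a differential inequality of the form $y'(t)+c\,y(t)\leq C$.

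The main obstacle is the simultaneous control of $r\int_{\Omega}uv$ by \emph{both} logistic dissipations without disturbing the dissipative sign structure: it demands the two-step Young splitting described above, and it is precisely here that the hypothesis $r_{i}\geq 2$ enters in an essential way. Once this is secured, Lemma \ref{c} (applied with $\alpha=1$, $A=c$, $h\equiv C$, and any fixed $\tau\in(0,T_{max})$), or equivalently a direct Grönwall argument, gives $y(t)\leq C_{\ast}$ on $(0,T_{max})$, which is exactly the claimed bound \eqref{!}.
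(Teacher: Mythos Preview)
Your proposal is correct and follows essentially the same strategy as the paper's own proof: integrate each equation, control the cross term $r\int_{\Omega}uv$ by a two-step Young inequality exploiting $r_{i}\geq 2$, form a weighted linear combination of the three mass identities, and close via an ODE comparison. The only cosmetic difference is that the paper splits $r\int_{\Omega}uv\leq\frac{\mu_{2}}{2}\int_{\Omega}v^{r_{2}}+L\int_{\Omega}u^{r_{2}/(r_{2}-1)}$ first (absorbing the $u$-remainder via $r_{1}\geq r_{2}/(r_{2}-1)$) and therefore weights the $u$-equation by the fixed factor $2L/\mu_{1}$, whereas you split toward $u^{r_{1}}$ first with a small parameter $\varepsilon_{1}$, which lets you keep unit weights on the $u$- and $v$-identities.
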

\begin{proof}
Under the Neumann boundary conditions, some integrations by parts show that
\begin{equation}\label{43}
\frac{d}{dt}\int_{\Omega}u+\int_{\Omega}u=\int_{\Omega}w+\int_{\Omega}u-\mu_{1}\int_{\Omega}u^{r_{1}} \ \ \ \mbox{for all}\ t \in(0,T_{max}),
\end{equation}
\begin{equation}\label{44}
\frac{d}{dt}\int_{\Omega}v+\int_{\Omega}v=\int_{\Omega}w+\int_{\Omega}v+r\int_{\Omega}uv-\mu_{2}\int_{\Omega}v^{r_{2}} \ \ \ \mbox{for all}\ t \in(0,T_{max})
\end{equation}
and
\begin{equation}\label{45}
\frac{d}{dt}\int_{\Omega}w+\int_{\Omega}w=\int_{\Omega}u+\int_{\Omega}v\ \ \ \mbox{for all}\ t \in(0,T_{max}).
\end{equation}
For the term $r\int_{\Omega}uv$, we apply Young's inequality to derive
\begin{equation}\label{46}
r\int_{\Omega}uv\leq \frac{\mu_{2}}{2}\int_{\Omega}v^{r_{2}}+L\int_{\Omega}u^{\frac{r_{2}}{r_{2}-1}}\ \ \ \mbox{for all}\ t \in(0,T_{max}),
\end{equation}
where $L=\frac{r_{2}-1}{r_{2}}\left(\frac{\mu_{2}r_{2}}{2}\right)^{-\frac{1}{r_{2}-1}}r^{\frac{r_{2}}{r_{2}-1}}>0.$ In conjunction with \eqref{43}-\eqref{46}, for all $t \in(0,T_{max})$ a straightforward computation yields
\begin{equation}\label{47}
\begin{aligned}
&\frac{d}{dt}\left(\frac{2L}{\mu_{1}}\int_{\Omega}u+\int_{\Omega}v+\frac{4L+2\mu_{1}}{\mu_{1}}\int_{\Omega}w\right)+\frac{2L}{\mu_{1}}\int_{\Omega}u+\int_{\Omega}v+\frac{2L+\mu_{1}}{\mu_{1}}\int_{\Omega}w\\
\leq&\frac{6L+2\mu_{1}}{\mu_{1}}\int_{\Omega}u+\frac{4L+3\mu_{1}}{\mu_{1}}\int_{\Omega}v+L\int_{\Omega}u^{\frac{r_{2}}{r_{2}-1}}-2L\int_{\Omega}u^{r_{1}}-\frac{\mu_{2}}{2}\int_{\Omega}v^{r_{2}}.
\end{aligned}
\end{equation}
Since $r_{i}\geq2$ $(i=1,2)$ implies $r_{1}\geq\frac{r_{2}}{r_{2}-1}$, we can find some positive constants $C_{i}$ $(i=1, ..., 3)$ fulfilling
\begin{equation*}
L\int_{\Omega}u^{\frac{r_{2}}{r_{2}-1}}\leq L\int_{\Omega}u^{r_{1}}+C_{1}\ \ \ \mbox{for all}\ t \in(0,T_{max})
\end{equation*}
and
\begin{equation*}
\frac{6L+2\mu_{1}}{\mu_{1}}\int_{\Omega}u\leq \frac{L}{2}\int_{\Omega}u^{r_{1}}+C_{2}\ \ \ \mbox{for all}\ t \in(0,T_{max})
\end{equation*}
as well as
\begin{equation*}
\frac{4L+3\mu_{1}}{\mu_{1}}\int_{\Omega}v\leq \frac{\mu_{2}}{4}\int_{\Omega}v^{r_{2}}+C_{3}\ \ \ \mbox{for all}\ t \in(0,T_{max})
\end{equation*}
thanks to Young's inequality, which update \eqref{47} as
\begin{equation}\label{51}
\begin{aligned}
&\frac{d}{dt}\left(\frac{2L}{\mu_{1}}\int_{\Omega}u+\int_{\Omega}v+\frac{4L+2\mu_{1}}{\mu_{1}}\int_{\Omega}w\right)+\frac{2L}{\mu_{1}}\int_{\Omega}u+\int_{\Omega}v+\frac{2L+\mu_{1}}{\mu_{1}}\int_{\Omega}w\\
\leq &-\frac{L}{2}\int_{\Omega}u^{r_{1}}-\frac{\mu_{2}}{4}\int_{\Omega}v^{r_{2}}+C_{4}\ \ \ \mbox{for all}\ t \in(0,T_{max}),
\end{aligned}
\end{equation}
where some constant $C_{4}=C_{1}+C_{2}+C_{3}>0$. In consequence, this implies that
\begin{equation*}
y(t):=\frac{2L}{\mu_{1}}\int_{\Omega}u(\cdot,t)+\int_{\Omega}v(\cdot,t)+\frac{4L+2\mu_{1}}{\mu_{1}}\int_{\Omega}w(\cdot,t)
\end{equation*}
satisfies
\begin{equation*}
y'(t)+\frac{1}{2}y(t)+\frac{L}{2}\int_{\Omega}u^{r_{1}}+\frac{\mu_{2}}{4}\int_{\Omega}v^{r_{2}}\leq C_{4}\ \ \ \mbox{for all}\ t \in(0,T_{max}),
\end{equation*}
and thus establishes \eqref{!} according to an ODE comparison argument.
\end{proof}

Next, we aim to obtain the higher-order regularity of solutions under two distinct conditions. Due to the structural differences in the logistic source terms, we employ separate bootstrap iteration procedures to raise the regularity of $u$ and $v$ in the following three lemmas.
\begin{lemma}\label{lemmaLp0}
Let $N\geq3$ and $r_{1}=r_{2}=2$. Then for some $q_{0}>\frac{N}{2}$ there exists a constant $C>0$ such that if $\min\{\mu_{1},\mu_{2}\}>\frac{2(N-2)_{+}}{N}C_{\frac{N}{2}+1}^{\frac{1}{\frac{N}{2}+1}}\max\{\chi_{1},\chi_{2}\}+\left[(\frac{2}{N})^{\frac{2}{N+2}}\frac{N}{N+2}\right]r$, we have
\begin{equation}\label{Lp0}
\|u(\cdot, t)\|_{L^{q_{0}}(\Omega)}+\|v(\cdot, t)\|_{L^{q_{0}}(\Omega)}\leq C  \ \ \ \mbox{for all}\ t \in(0,T_{max}).
\end{equation}
\end{lemma}
\begin{proof}
Define $\chi=\max\{\chi_{1},\chi_{2}\}$. Multiplying the first equation by $u^{q-1}$ $(q>1)$ and integrating by parts, one has
\begin{equation}\label{1}
\begin{aligned}
&\frac{1}{q}\frac{d}{dt}\int_{\Omega}u^{q}+\frac{q+1}{q}\int_{\Omega}u^{q}+(q-1)\int_{\Omega}u^{q-2}|\nabla u|^{2}\\
=&-\chi_{1}\int_{\Omega}\nabla\cdot(u\nabla w)u^{q-1}+\int_{\Omega}u^{q-1}w+\frac{q+1}{q}\int_{\Omega}u^{q}-\mu_{1}\int_{\Omega}u^{q+1}\\
\leq&\frac{q-1}{q}\chi\int_{\Omega}u^{q}|\Delta w|+\int_{\Omega}u^{q-1}w+\frac{q+1}{q}\int_{\Omega}u^{q}-\mu_{1}\int_{\Omega}u^{q+1} \ \ \ \mbox{for all}\ t \in(0,T_{max}).
\end{aligned}
\end{equation}
Let
\begin{equation*}
\lambda_{0} :=2(A_{1}C_{q+1}q)^{\frac{1}{q+1}}\chi,
\end{equation*}
where $A_{1}$ is defined as in Lemma \ref{a} ($\delta=q$), and $C_{q+1}$ is given by Lemma \ref{b} ($\gamma=q+1$).
By Young's inequality, for any $\varepsilon_{1}>0$ and $\varepsilon_{2}>0$ we have
\begin{equation}\label{3}
\begin{aligned}
\frac{q-1}{q}\chi\int_{\Omega}u^{q}|\Delta w|&\leq\lambda_{0}\int_{\Omega}u^{q+1}+\frac{1}{q+1}\left[\lambda_{0}\frac{q+1}{q}\right]^{-q}\left[\frac{q-1}{q}\chi\right]^{q+1}\int_{\Omega}|\Delta w|^{q+1}\\
&=\lambda_{0}\int_{\Omega}u^{q+1}+A_{1}\lambda_{0}^{-q}\chi^{q+1}\int_{\Omega}|\Delta w|^{q+1}
\end{aligned}
\end{equation}
and
\begin{equation}\label{4}
\begin{aligned}
&\int_{\Omega}u^{q-1}w+\frac{q+1}{q}\int_{\Omega}u^{q}-\mu_{1}\int_{\Omega}u^{q+1}\\
\leq& C_{1}(\varepsilon_{1},q)\int_{\Omega}w^{\frac{q+1}{2}}+(\varepsilon_{1}+\varepsilon_{2}-\mu_{1})\int_{\Omega}u^{q+1}+C_{2}(\varepsilon_{2},q)\\
\leq& A_{1}\lambda_{0}^{-q}\chi^{q+1}\int_{\Omega}w^{q+1}+(\varepsilon_{1}+\varepsilon_{2}-\mu_{1})\int_{\Omega}u^{q+1}+C_{3}(\varepsilon_{1},\varepsilon_{2},q),
\end{aligned}
\end{equation}
where
\begin{equation*}
C_{1}(\varepsilon_{1},q)=\frac{2}{q+1}\left(\varepsilon_{1}\frac{q+1}{q-1}\right)^{-\frac{q-1}{2}},
\end{equation*}
\begin{equation*}
C_{2}(\varepsilon_{2},q)=\frac{1}{q+1}\left(\varepsilon_{2}\frac{q+1}{q}\right)^{-q}\left(\frac{q+1}{q}\right)^{q+1}|\Omega|
\end{equation*}
and
\begin{equation*}
C_{3}(\varepsilon_{1},\varepsilon_{2},q)=C_{2}(\varepsilon_{2},q)+\frac{1}{2}\left(2A_{1}\lambda_{0}^{-q}\chi^{q+1}\right)^{-1}C_{1}^{2}(\varepsilon_{1},q)|\Omega|,
\end{equation*}
Combining \eqref{3} and \eqref{4} with \eqref{1} yields
\begin{equation}\label{8}
\begin{aligned}
&\frac{1}{q}\frac{d}{dt}\int_{\Omega}u^{q}+\frac{q+1}{q}\int_{\Omega}u^{q}\\
\leq &A_{1}\lambda_{0}^{-q}\chi^{q+1}\left(\int_{\Omega}|\Delta w|^{q+1}+\int_{\Omega}w^{q+1}\right)+(\lambda_{0}+\varepsilon_{1}+\varepsilon_{2}-\mu_{1})\int_{\Omega}u^{q+1}+C_{3}(\varepsilon_{1},\varepsilon_{2},q)
\end{aligned}
\end{equation}
for all $t\in(0,T_{max})$. Multiplying both sides of \eqref{8} by $e^{(q+1)t}$, we obtain
\begin{equation}\label{9}
\begin{aligned}
&\frac{1}{q}\frac{d}{dt}\left(e^{(q+1)t}\|u(\cdot,t)\|_{L^{q}(\Omega)}^{q}\right)\\
\leq &\left[A_{1}\lambda_{0}^{-q}\chi^{q+1}\int_{\Omega}\left(|\Delta w|^{q+1}+w^{q+1}\right)+(\lambda_{0}+\varepsilon_{1}+\varepsilon_{2}-\mu_{1})\int_{\Omega}u^{q+1}+C_{3}(\varepsilon_{1},\varepsilon_{2},q)\right]e^{(q+1)t}.
\end{aligned}
\end{equation}
Integrating \eqref{9} over $[s_{0},t)$, for all $t\in(s_{0},T_{max})$, we see that
\begin{equation}\label{10}
\begin{aligned}
&\frac{1}{q}\|u(\cdot, t)\|_{L^{q}(\Omega)}^{q}\\
\leq &\frac{1}{q}e^{-(q+1)(t-s_{0})}\|u(\cdot, s_{0})\|_{L^{q}(\Omega)}^{q}+A_{1}\lambda_{0}^{-q}\chi^{q+1}\int_{s_{0}}^{t}e^{-(q+1)(t-s)}\int_{\Omega}\left(|\Delta w|^{q+1}+w^{q+1}\right)ds\\
&+(\lambda_{0}+\varepsilon_{1}+\varepsilon_{2}-\mu_{1})\int_{s_{0}}^{t}e^{-(q+1)(t-s)}\int_{\Omega}u^{q+1}ds+C_{3}(\varepsilon_{1},\varepsilon_{2},q)\int_{s_{0}}^{t}e^{-(q+1)(t-s)}ds\\
\leq & A_{1}\lambda_{0}^{-q}\chi^{q+1}\int_{s_{0}}^{t}e^{-(q+1)(t-s)}\int_{\Omega}\left(|\Delta w|^{q+1}+w^{q+1}\right)ds\\
&+(\lambda_{0}+\varepsilon_{1}+\varepsilon_{2}-\mu_{1})\int_{s_{0}}^{t}e^{-(q+1)(t-s)}\int_{\Omega}u^{q+1}ds+C_{4}(\varepsilon_{1},\varepsilon_{2},q),
\end{aligned}
\end{equation}
where $s_{0}$ is the same as in \eqref{s0} and a positive constant
\begin{equation*}
C_{4}(\varepsilon_{1},\varepsilon_{2},q)=\frac{1}{q}\|u(\cdot, s_{0})\|_{L^{q}(\Omega)}^{q}+\frac{C_{3}(\varepsilon_{1},\varepsilon_{2},q)}{q+1}.
\end{equation*}
Applying Lemma \ref{b}, we can estimate
\begin{equation}\label{12}
\begin{aligned}
&A_{1}\lambda_{0}^{-q}\chi^{q+1}\int_{s_{0}}^{t}e^{-(q+1)(t-s)}\int_{\Omega}\left(|\Delta w|^{q+1}+w^{q+1}\right)ds\\
\leq  &A_{1}\lambda_{0}^{-q}\chi^{q+1}e^{-(q+1)t}C_{q+1}\int_{s_{0}}^{t}e^{(q+1)s}\|u+v\|_{L^{q+1}(\Omega)}^{q+1}\\
+&A_{1}\lambda_{0}^{-q}\chi^{q+1}e^{-(q+1)(t-s_{0})}C_{q+1}\left(\|w(\cdot,s_{0})\|_{L^{q+1}(\Omega)}^{q+1}+\|\Delta w(\cdot,s_{0})\|_{L^{q+1}(\Omega)}^{q+1}\right)\\
\leq  &A_{1}\lambda_{0}^{-q}\chi^{q+1}C_{q+1}2^{q}\left(\int_{s_{0}}^{t}e^{-(q+1)(t-s)}\|u(\cdot,s)\|_{L^{q+1}(\Omega)}^{q+1}
+\int_{s_{0}}^{t}e^{-(q+1)(t-s)}\|v(\cdot,s)\|_{L^{q+1}(\Omega)}^{q+1}\right)\\
+&A_{1}\lambda_{0}^{-q}\chi^{q+1}e^{-(q+1)(t-s_{0})}C_{q+1}\left(\|w(\cdot,s_{0})\|_{L^{q+1}(\Omega)}^{q+1}+\|\Delta w(\cdot,s_{0})\|_{L^{q+1}(\Omega)}^{q+1}\right)
\end{aligned}
\end{equation}
for all $t\in(s_{0},T_{max})$. Then the combination of \eqref{12} and \eqref{10} gives
\begin{equation}\label{13}
\begin{aligned}
&\frac{1}{q}\|u(\cdot, t)\|_{L^{q}(\Omega)}^{q}\\
\leq  &A_{1}\lambda_{0}^{-q}\chi^{q+1}C_{q+1}2^{q}\left(\int_{s_{0}}^{t}e^{-(q+1)(t-s)}\|u(\cdot,s)\|_{L^{q+1}(\Omega)}^{q+1}
+\int_{s_{0}}^{t}e^{-(q+1)(t-s)}\|v(\cdot,s)\|_{L^{q+1}(\Omega)}^{q+1}\right)\\
+&(\lambda_{0}+\varepsilon_{1}+\varepsilon_{2}-\mu_{1})\int_{s_{0}}^{t}e^{-(q+1)(t-s)}\int_{\Omega}u^{q+1}ds+C_{5}(\varepsilon_{1},\varepsilon_{2},q) \ \ \ \mbox{for all}\ t \in(s_{0},T_{max}).
\end{aligned}
\end{equation}
where the positive constant
\begin{equation*}
\begin{aligned}
&C_{5}(\varepsilon_{1},\varepsilon_{2},q)\\
=&A_{1}\lambda_{0}^{-q}\chi^{q+1}C_{q+1}\left(\|w(\cdot,s_{0})\|_{L^{q+1}(\Omega)}^{q+1}+\|\Delta w(\cdot,s_{0})\|_{L^{q+1}(\Omega)}^{q+1}\right)+C_{4}(\varepsilon_{1},\varepsilon_{2},q).
\end{aligned}
\end{equation*}
Similarly, testing the second equation against $v^{q-1}$ and integrating by parts, we infer that
\begin{equation*}
\begin{aligned}
&\frac{1}{q}\frac{d}{dt}\int_{\Omega}v^{q}+\frac{q+1}{q}\int_{\Omega}v^{q}\\
\leq&\frac{q-1}{q}\chi\int_{\Omega}v^{q}|\Delta w|+\int_{\Omega}v^{q-1}w+r\int_{\Omega}uv^{q}+\frac{q+1}{q}\int_{\Omega}v^{q}-\mu_{2}\int_{\Omega}v^{q+1} \ \ \ \mbox{for all}\ t \in(0,T_{max}).
\end{aligned}
\end{equation*}
The term $r\int_{\Omega}uv^{q}$ follows from Young's inequality that
\begin{equation*}
r\int_{\Omega}uv^{q}\leq r\left(\frac{1}{q}\right)^{\frac{1}{q+1}}\frac{q}{q+1}\int_{\Omega}v^{q+1}+r\left(\frac{1}{q}\right)^{\frac{1}{q+1}}\frac{q}{q+1}\int_{\Omega}u^{q+1} \ \ \ \mbox{for all}\ t \in(0,T_{max}).
\end{equation*}
Repeating the same process from \eqref{3} to \eqref{4}, for any $\varepsilon_{3}>0$ and $\varepsilon_{4}>0$, there exists a positive constant $C_{6}(\varepsilon_{3},\varepsilon_{4},q)$ such that
\begin{equation}\label{17}
\begin{aligned}
&\frac{1}{q}\frac{d}{dt}\int_{\Omega}v^{q}+\frac{q+1}{q}\int_{\Omega}v^{q}\\
\leq &A_{1}\lambda_{0}^{-q}\chi^{q+1}\left(\int_{\Omega}|\Delta w|^{q+1}+\int_{\Omega}w^{q+1}\right)+r\left(\frac{1}{q}\right)^{\frac{1}{q+1}}\frac{q}{q+1}\int_{\Omega}u^{q+1} \\
+&\left[\lambda_{0}+\varepsilon_{3}+\varepsilon_{4}+r\left(\frac{1}{q}\right)^{\frac{1}{q+1}}\frac{q}{q+1}-\mu_{2}\right]\int_{\Omega}v^{q+1}+C_{6}(\varepsilon_{3},\varepsilon_{4},q)\ \ \ \mbox{for all}\ t \in(0,T_{max}).
\end{aligned}
\end{equation}
Thanks to the idea in  \eqref{9}-\eqref{13}, \eqref{17} yields a positive constant $C_{7}(\varepsilon_{3},\varepsilon_{4},q)$ such that
\begin{equation}\label{18}
\begin{aligned}
&\frac{1}{q}\|v(\cdot, t)\|_{L^{q}(\Omega)}^{q}\\
\leq  &A_{1}\lambda_{0}^{-q}\chi^{q+1}C_{q+1}2^{q}\left(\int_{s_{0}}^{t}e^{-(q+1)(t-s)}\|u(\cdot,s)\|_{L^{q+1}(\Omega)}^{q+1}
+\int_{s_{0}}^{t}e^{-(q+1)(t-s)}\|v(\cdot,s)\|_{L^{q+1}(\Omega)}^{q+1}\right)\\
+&\left[\lambda_{0}+\varepsilon_{3}+\varepsilon_{4}+r\left(\frac{1}{q}\right)^{\frac{1}{q+1}}\frac{q}{q+1}-\mu_{2}\right]\int_{s_{0}}^{t}e^{-(q+1)(t-s)}\int_{\Omega}v^{q+1}ds\\
+&r\left(\frac{1}{q}\right)^{\frac{1}{q+1}}\frac{q}{q+1}\int_{s_{0}}^{t}e^{-(q+1)(t-s)}\int_{\Omega}u^{q+1}ds+C_{7}(\varepsilon_{3},\varepsilon_{4},q) \ \ \ \mbox{for all}\ t \in(s_{0},T_{max}).
\end{aligned}
\end{equation}
Based on Lemma \ref{a}, \eqref{13} and \eqref{18} imply that
\begin{equation}\label{19}
\begin{aligned}
&\frac{1}{q}\left(\|u(\cdot, t)\|_{L^{q}(\Omega)}^{q}+\|v(\cdot, t)\|_{L^{q}(\Omega)}^{q}\right)-C_{8}(\varepsilon_{1},\varepsilon_{2},\varepsilon_{3},\varepsilon_{4},q)\\
\leq  &\left[A_{1}\lambda_{0}^{-q}\chi^{q+1}C_{q+1}2^{q+1}+\lambda_{0}+\varepsilon_{1}+\varepsilon_{2}+r\left(\frac{1}{q}\right)^{\frac{1}{q+1}}\frac{q}{q+1}-\mu_{1}\right]\int_{s_{0}}^{t}e^{-(q+1)(t-s)}\|u(\cdot,s)\|_{L^{q+1}(\Omega)}^{q+1}\\
+&\left[A_{1}\lambda_{0}^{-q}\chi^{q+1}C_{q+1}2^{q+1}+\lambda_{0}+\varepsilon_{3}+\varepsilon_{4}+r\left(\frac{1}{q}\right)^{\frac{1}{q+1}}\frac{q}{q+1}-\mu_{2}\right]\int_{s_{0}}^{t}e^{-(q+1)(t-s)}\|v(\cdot,s)\|_{L^{q+1}(\Omega)}^{q+1}\\
=&\left[\frac{2(q-1)}{q}C_{q+1}^{\frac{1}{q+1}}\chi+\varepsilon_{1}+\varepsilon_{2}+r\left(\frac{1}{q}\right)^{\frac{1}{q+1}}\frac{q}{q+1}-\mu_{1}\right]\int_{s_{0}}^{t}e^{-(q+1)(t-s)}\|u(\cdot,s)\|_{L^{q+1}(\Omega)}^{q+1}\\
+&\left[\frac{2(q-1)}{q}C_{q+1}^{\frac{1}{q+1}}\chi+\varepsilon_{3}+\varepsilon_{4}+r\left(\frac{1}{q}\right)^{\frac{1}{q+1}}\frac{q}{q+1}-\mu_{2}\right]\int_{s_{0}}^{t}e^{-(q+1)(t-s)}\|v(\cdot,s)\|_{L^{q+1}(\Omega)}^{q+1}\\
\end{aligned}
\end{equation}
for all $t\in(s_{0},T_{max})$ with some positive constant $C_{8}(\varepsilon_{1},\varepsilon_{2},\varepsilon_{3},\varepsilon_{4},q)=C_{5}(\varepsilon_{1},\varepsilon_{2},q)+C_{7}(\varepsilon_{3},\varepsilon_{4},q)$. Since
\begin{equation*}
\mu_{i}>\frac{2(N-2)_{+}}{N}C_{\frac{N}{2}+1}^{\frac{1}{\frac{N}{2}+1}}\chi+\left[(\frac{2}{N})^{\frac{2}{N+2}}\frac{N}{N+2}\right]r \ \ \  (i=1,2),
\end{equation*}
we can choose $q:=q_{0}>\frac{N}{2}$ such that
\begin{equation*}
\mu_{i}>\frac{2(q_{0}-1)}{q_{0}}C_{q_{0}+1}^{\frac{1}{q_{0}+1}}\chi+r\left(\frac{1}{q_{0}}\right)^{\frac{1}{q_{0}+1}}\frac{q_{0}}{q_{0}+1} \ \ \  (i=1,2).
\end{equation*}
Then we pick $\varepsilon_{i}$ $(i=1, .., 4)$ that are sufficiently small such that
\begin{equation*}
0<\varepsilon_{1}+\varepsilon_{2}<\mu_{1}-\frac{2(q_{0}-1)}{q_{0}}C_{q_{0}+1}^{\frac{1}{q_{0}+1}}\chi-r\left(\frac{1}{q_{0}}\right)^{\frac{1}{q_{0}+1}}\frac{q_{0}}{q_{0}+1}
\end{equation*}
and
\begin{equation*}
0<\varepsilon_{3}+\varepsilon_{4}<\mu_{2}-\frac{2(q_{0}-1)}{q_{0}}C_{q_{0}+1}^{\frac{1}{q_{0}+1}}\chi-r\left(\frac{1}{q_{0}}\right)^{\frac{1}{q_{0}+1}}\frac{q_{0}}{q_{0}+1}.
\end{equation*}
Therefore, it follows from \eqref{s0} and \eqref{19} that \eqref{Lp0} holds.
\end{proof}
\begin{lemma}\label{lemmaLp}
Let $N\geq3$ and $r_{1}=r_{2}=2$. Then for any $p>1$ there exists a constant $C>0$ such that if $\min\{\mu_{1},\mu_{2}\}>\frac{2(N-2)_{+}}{N}C_{\frac{N}{2}+1}^{\frac{1}{\frac{N}{2}+1}}\max\{\chi_{1},\chi_{2}\}+\left[(\frac{2}{N})^{\frac{2}{N+2}}\frac{N}{N+2}\right]r$, one has
\begin{equation}\label{Lp}
\|u(\cdot, t)\|_{L^{p}(\Omega)}+\|v(\cdot, t)\|_{L^{p}(\Omega)}\leq C  \ \ \ \mbox{for all}\ t \in(0,T_{max}).
\end{equation}
\end{lemma}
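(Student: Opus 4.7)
The plan is a bootstrap scheme raising the $L^1$ bound from Lemma \ref{lemmaL1} to $L^{N/2}$ (the critical step where the hypothesis on $\mu_i$ is consumed), then to $L^{N/2+\varepsilon}$, and finally to $L^p$ for any $p>1$. For the critical step I set $p=N/2$, multiply the first equation of \eqref{model} by $u^{p-1}$ and the second by $v^{p-1}$, integrate by parts using the Neumann conditions, and add the identities. Writing each chemotactic contribution as $-\chi_i(p-1)/p\int u^p\Delta w$ (respectively for $v$), the right-hand side carries three problematic terms: the chemotaxis pieces $\chi_1(p-1)/p\int u^p|\Delta w|$ and $\chi_2(p-1)/p\int v^p|\Delta w|$, the cross term $r\int uv^p$, and the linear term $\int w(u^{p-1}+v^{p-1})$; on the left I have the logistic dissipation $\mu_1\int u^{p+1}+\mu_2\int v^{p+1}$ together with the diffusion.

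On each chemotaxis integral I would apply Young's inequality with conjugate exponents $(p+1)/p$ and $p+1$ and common parameter $y>0$; summing produces $y\int(u^{p+1}+v^{p+1})+2A_1 y^{-p}\chi^{p+1}\int|\Delta w|^{p+1}$ with $\chi=\max\{\chi_1,\chi_2\}$ and $A_1$ as in Lemma \ref{a}. For the cross term, a separate Young split $ruv^p\leq\frac{r\lambda^{p+1}}{p+1}u^{p+1}+\frac{rp}{(p+1)\lambda^{(p+1)/p}}v^{p+1}$ with $\lambda$ chosen to equalise the two coefficients gives the common value $\frac{rp^{p/(p+1)}}{p+1}$, which when $p=N/2$ simplifies algebraically to $(\frac{2}{N})^{2/(N+2)}\frac{N}{N+2}r$. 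The linear terms $\int wu^{p-1}$ and $\int wv^{p-1}$ are dispatched by a further Young into arbitrarily small fractions of $\int u^{p+1}$, $\int v^{p+1}$ and $\int w^{p+1}$, the last being absorbed jointly with $|\Delta w|^{p+1}$ in the next step.

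I would then multiply the resulting differential inequality by $e^{(p+1)t}$ and integrate over $[s_0,t]$, invoking the maximal Sobolev regularity of Lemma \ref{b} with $\gamma=p+1=N/2+1$ to replace $\int_{s_0}^t e^{(p+1)s}\int|\Delta w|^{p+1}\,ds$ by $C_{N/2+1}\int_{s_0}^t e^{(p+1)s}\int(u+v)^{p+1}\,ds$ plus an initial datum finite thanks to \eqref{s0} and Lemma \ref{Local1}. Bounding $(u+v)^{p+1}\leq 2^p(u^{p+1}+v^{p+1})$ converts the chemotactic feedback coefficient exactly into the form $y+A_1 y^{-p}(2\chi)^{p+1}C_{N/2+1}$, whose minimum over $y$ is supplied by Lemma \ref{a} with $\delta=N/2$ and equals $\frac{2(N-2)}{N}C_{N/2+1}^{1/(N/2+1)}\max\{\chi_1,\chi_2\}$; adding the cross-term contribution recovers $\mu^{\star}$ exactly. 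Hence the hypothesis $\min\{\mu_1,\mu_2\}>\mu^{\star}$ leaves strictly positive residual dissipation, and Lemma \ref{c} then delivers the uniform $L^{N/2}$ bound.

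With the $L^{N/2}$ bound in hand, Lemma \ref{d} yields $\nabla w\in L^q$ (uniformly in time) for any $q<N$, and a standard iteration (returning to the $u^{p-1}$, $v^{p-1}$ testing with $p>N/2$, controlling $\int u^p|\Delta w|$ by H\"older with this improved regularity, and absorbing via the diffusion together with the logistic term $\mu_i u^{p+1}$ with arbitrarily small Young parameter) raises the estimate to $L^{N/2+\varepsilon}$ and thereafter to every $L^p$, with no further smallness on $\mu_i$ required. The principal obstacle is the first step: the joint optimisation over the chemotaxis Young parameter $y$ and the cross-term parameter $\lambda$, combined with the exponentially weighted integration of the maximal-regularity feedback, must be calibrated to recover precisely the sharp constant from Lemma \ref{a} with no stray factors that would inflate $\mu^{\star}$ beyond what the hypothesis permits.
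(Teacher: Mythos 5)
Your outline reproduces the paper's proof almost exactly: the same Young split of $\frac{q-1}{q}\chi\int_\Omega u^q|\Delta w|$ with the optimal parameter $y=\lambda_0$ from Lemma \ref{a}, the same equalised split of $r\int_\Omega uv^q$ giving the common coefficient $\frac{q^{q/(q+1)}}{q+1}r$ (which at $q=\frac N2$ is precisely $(\frac2N)^{\frac{2}{N+2}}\frac{N}{N+2}r$), the exponentially weighted time integration combined with Lemma \ref{b} at $\gamma=q+1$ and the bound $(u+v)^{q+1}\le 2^q(u^{q+1}+v^{q+1})$ to assemble exactly $H(y)=y+A_1y^{-q}(2\chi)^{q+1}C_{q+1}$, and finally the H\"older/Gagliardo--Nirenberg absorption once $\nabla w$ is under control. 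The one place where your description would not survive a literal implementation is the passage beyond the critical exponent. If you stop at $q=\frac N2$ exactly, Lemma \ref{d} gives $\nabla w\in L^q(\Omega)$ only for $q<N$, so the H\"older step forces $u^{p/2}$ into $L^{s}(\Omega)$ with $s>\frac{2N}{N-2}$, which is supercritical for Gagliardo--Nirenberg interpolation against $\|\nabla u^{p/2}\|_{L^2(\Omega)}$ and $\|u^{p/2}\|_{L^{N/p}(\Omega)}$, and the absorption into the diffusion fails; likewise, splitting $\int_\Omega u^p|\Delta w|$ with an arbitrarily small Young parameter on $u^{p+1}$ inflates the coefficient of $\int_\Omega|\Delta w|^{p+1}$, which the maximal-regularity feedback returns as a large (non-absorbable) multiple of $\int_\Omega u^{p+1}$. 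The paper sidesteps this entirely: since the hypothesis is a strict inequality and the critical coefficient $\frac{2(q-1)}{q}C_{q+1}^{\frac{1}{q+1}}\chi+\frac{q^{q/(q+1)}}{q+1}r$ depends continuously on $q$, it runs your critical step directly at some $q_0>\frac N2$, after which $\frac{2q_0}{q_0-1}<\frac{2N}{N-2}$ and the Gagliardo--Nirenberg exponent is subcritical, so no further smallness of $\mu_i$ is needed for any $p$. (A cosmetic point: after the weighted time integration one no longer has a differential inequality, so Lemma \ref{c} is not invoked there; the $L^{q_0}$ bound is read off directly because the surviving right-hand terms are nonpositive.) With the critical step relocated to $q_0>\frac N2$, your argument is the paper's.
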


\begin{proof}
By Lemma \ref{d}, \eqref{Lp0} shows that
\begin{equation}\label{25}
\| w(\cdot, t)\|_{W^{1,r}(\Omega)}\leq C_{1}
\end{equation}
for all $t \in(0,T_{max})$ and $r\in[1,\frac{Nq_{0}}{(N-q_{0})^{+}})$ with $C_{1}>0$. Applying the Sobolev embedding theorem, we can find a constant $C_{2}>0$ such that
\begin{equation}\label{26}
\| w(\cdot, t)\|_{L^{\infty}(\Omega)}\leq C_{2}\ \ \ \mbox{for all}\ t \in(0,T_{max}).
\end{equation}
Multiplying the first equation \eqref{model} by $u^{p-1}$ $(p>1)$, we integrate by parts to obtain
\begin{equation}\label{27}
\frac{1}{p}\frac{d}{dt}\int_{\Omega}u^{p}+(p-1)\int_{\Omega}u^{p-2}|\nabla u|^{2}=\chi_{1}(p-1)\int_{\Omega}u^{p-1}\nabla u\cdot\nabla w+\int_{\Omega}u^{p-1}w-\mu_{1}\int_{\Omega}u^{p+1},
\end{equation}
where Young's inequality and \eqref{26} give
\begin{equation}\label{28}
\chi_{1}(p-1)\int_{\Omega}u^{p-1}\nabla u\cdot\nabla w\leq \frac{p-1}{2}\int_{\Omega}u^{p-2}|\nabla u|^{2}+\frac{\chi_{1}^{2}(p-1)}{2}\int_{\Omega}u^{p}|\nabla w|^{2}
\end{equation}
and
\begin{equation}\label{29}
\begin{aligned}
\int_{\Omega}u^{p-1}w\leq &\frac{\mu_{1}}{2}\int_{\Omega}u^{p+1}+C_{3}\int_{\Omega}w^{\frac{p+1}{2}}\\
\leq&\frac{\mu_{1}}{2}\int_{\Omega}u^{p+1}+C_{4}
\end{aligned}
\end{equation}
with some positive constants $C_{3}$ and $C_{4}$. Substituting \eqref{28} and \eqref{29} into \eqref{27}, we have
\begin{equation}\label{30}
\frac{1}{p}\frac{d}{dt}\int_{\Omega}u^{p}+\frac{p-1}{2}\int_{\Omega}u^{p-2}|\nabla u|^{2}\leq\frac{\chi^{2}(p-1)}{2}\int_{\Omega}u^{p}|\nabla w|^{2}-\frac{\mu_{1}}{2}\int_{\Omega}u^{p+1}+C_{4}
\end{equation}
for all $t\in(0,T_{max})$. In view of the H\"{o}lder inequality and \eqref{25}, there exists a constant $C_{5}>0$ such that
\begin{equation}\label{31}
\begin{aligned}
\frac{\chi_{1}^{2}(p-1)}{2}\int_{\Omega}u^{p}|\nabla w|^{2}\leq&\frac{\chi^{2}(p-1)}{2}\left(\int_{\Omega}u^{\frac{pq_{0}}{q_{0}-1}}\right)^{\frac{q_{0}-1}{q_{0}}}\left(\int_{\Omega}|\nabla w|^{2q_{0}}\right)^{\frac{1}{q_{0}}}\\
\leq&C_{5}\|u^{\frac{p}{2}}\|_{L^{\frac{2q_{0}}{q_{0}-1}}(\Omega)}^{2}\ \ \ \mbox{for all}\ t \in(0,T_{max})
\end{aligned}
\end{equation}
thanks to $2q_{0}<\frac{Nq_{0}}{(N-q_{0})^{+}}$, where $q_{0}>\frac{N}{2}$ ($N\geq3$) coincides with that in \eqref{Lp0}. Letting $p>q_{0}+1$, the fact $q_{0}>\frac{N}{2}$ yields that
\begin{equation*}
\frac{q_{0}}{p}<\frac{q_{0}}{q_{0}-1}<\frac{N}{N-2}.
\end{equation*}
Due to the Gagliardo-Nirenberg inequality, for some positive constants $C_{6}$ and $C_{7}$ we conclude that
\begin{equation*}
\begin{aligned}
C_{5}\|u^{\frac{p}{2}}\|_{L^{\frac{2q_{0}}{q_{0}-1}}(\Omega)}^{2}\leq& C_{6}\left(\|\nabla u^{\frac{p}{2}}\|^{2\theta}_{L^{2}(\Omega)}\|u^{\frac{p}{2}}\|^{2(1-\theta)}_{L^{\frac{2q_{0}}{p}}(\Omega)}+\|u^{\frac{p}{2}}\|^{2}_{L^{\frac{2q_{0}}{p}}(\Omega)}\right)\\
\leq&C_{7}\left(\|\nabla u^{\frac{p}{2}}\|^{2\theta}_{L^{2}(\Omega)}+1\right)\ \ \ \mbox{for all}\ t \in(0,T_{max}),
\end{aligned}
\end{equation*}
where
\begin{equation*}
\theta=\frac{\frac{Np}{2q_{0}}-\frac{Np}{2\frac{q_{0}}{q_{0}-1}p}}{1-\frac{N}{2}+\frac{Np}{2q_{0}}}\in(0,1).
\end{equation*}
Since $\theta<1$, we may employ Young's inequality to estimate
\begin{equation*}
C_{5}\|u^{\frac{p}{2}}\|_{L^{\frac{2q_{0}}{q_{0}-1}}(\Omega)}^{2}\leq \frac{p-1}{p^{2}}\int_{\Omega}|\nabla u^{\frac{p}{2}}|^{2}+C_{8}\ \ \ \mbox{for all}\ t \in(0,T_{max})
\end{equation*}
with some constant $C_{8}>0$, which updates \eqref{31} as
\begin{equation}\label{36}
\frac{\chi_{1}^{2}(p-1)}{2}\int_{\Omega}u^{p}|\nabla w|^{2}\leq\frac{p-1}{p^{2}}\int_{\Omega}|\nabla u^{\frac{p}{2}}|^{2}+C_{8}\ \ \ \mbox{for all}\ t \in(0,T_{max}).
\end{equation}
Noting the fact $\int_{\Omega}u^{p-2}|\nabla u|^{2}=\frac{4}{p^{2}}\int_{\Omega}|\nabla u^{\frac{p}{2}}|^{2}$, the combination of \eqref{30} and \eqref{36} entails
\begin{equation*}
\begin{aligned}
\frac{1}{p}\frac{d}{dt}\int_{\Omega}u^{p}+\frac{p-1}{p^{2}}\int_{\Omega}|\nabla u^{\frac{p}{2}}|^{2}\leq&-\frac{\mu_{1}}{2}\int_{\Omega}u^{p+1}+C_{9}\\
\leq&-\frac{\mu_{1}}{2|\Omega|^{\frac{1}{p}}}\left(\int_{\Omega}u^{p}\right)^{\frac{p+1}{p}}+C_{9}
\end{aligned}
\end{equation*}
with some positive constant $C_{9}=C_{4}+C_{8}$. Consequently, there exists a positive constant $C_{10}=\frac{p\mu_{1}}{2|\Omega|^{\frac{1}{p}}}$ such that $y(t):=\int_{\Omega}u^{p}(t)$ satisfies
\begin{equation}\label{38}
y'(t)+C_{10}y^{\frac{p+1}{p}}\leq pC_{9}\ \ \ \mbox{for all}\ t \in(0,T_{max}),
\end{equation}
which from a standard ODE comparison argument yields
\begin{equation}\label{39}
\|u(\cdot,t)\|_{L^{p}(\Omega)}\leq C_{11}\ \ \ \mbox{for all}\ t \in(0,T_{max})
\end{equation}
with some constant $C_{11}>0$.

As for the component $v$, by a straightforward testing procedure, we see that
\begin{equation}\label{40}
\begin{aligned}
\frac{1}{p}\frac{d}{dt}\int_{\Omega}v^{p}+(p-1)\int_{\Omega}v^{p-2}|\nabla v|^{2}=&\chi_{2}(p-1)\int_{\Omega}v^{p-1}\nabla v\cdot\nabla w+\int_{\Omega}v^{p-1}w\\
&+r\int_{\Omega}uv^{p}-\mu_{2}\int_{\Omega}v^{p+1}\ \ \ \mbox{for all}\ t \in(0,T_{max}).
\end{aligned}
\end{equation}
Compared with \eqref{27}, the only difference is the presence of the term $r\int_{\Omega}uv^{p}$. Based on \eqref{39}, for some positive constants $C_{12}$ and $C_{13}$ we use Young's inequality to derive
\begin{equation}\label{41}
\begin{aligned}
r\int_{\Omega}uv^{p}\leq& \frac{\mu_{2}}{2}\int_{\Omega}v^{p+1}+C_{12}\int_{\Omega}u^{p+1}\\
\leq&\frac{\mu_{2}}{2}\int_{\Omega}v^{p+1}+C_{13}\ \ \ \mbox{for all}\ t \in(0,T_{max}).
\end{aligned}
\end{equation}
Similarly, inserting \eqref{41} into \eqref{40} and repeating the process from \eqref{28} to \eqref{38}, we can find some constant $C_{14}>0$ such that
\begin{equation}\label{42}
\|v(\cdot,t)\|_{L^{p}(\Omega)}\leq C_{14}\ \ \ \mbox{for all}\ t \in(0,T_{max}),
\end{equation}
which along with \eqref{39} proves \eqref{Lp}.
\end{proof}

\begin{lemma}\label{p-q}
Let $N\geq3$ and $r_{i}>2$ $(i=1,2)$. Then for any $p>1$ and $q>1$, one can find a positive constant $C$ such that
\begin{equation}\label{pq}
\|u(\cdot,t)\|_{L^{p}(\Omega)}^{p}+\|v(\cdot,t)\|_{L^{q}(\Omega)}^{q}\leq C\ \ \ \mbox{for all}\ t \in(0,T_{max}).
\end{equation}
\end{lemma}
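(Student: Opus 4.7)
The plan is to mirror the bootstrap structure of Lemma \ref{lemmaLp}, replacing the role played there by the size condition on $\mu_1, \mu_2$ with the stronger damping coming from $r_i > 2$. First I would test the $u$-equation against $u^{q-1}$ (and analogously the $v$-equation against $v^{q-1}$) for arbitrary $q \geq 2$, perform the same integration by parts as in \eqref{1}, and arrive at the energy identity
\begin{equation*}
\frac{1}{q}\frac{d}{dt}\int_\Omega u^q + (q-1)\int_\Omega u^{q-2}|\nabla u|^2 + \mu_1 \int_\Omega u^{q+r_1-1} \leq \frac{\chi_1(q-1)}{q}\int_\Omega u^q|\Delta w| + \int_\Omega u^{q-1}w,
\end{equation*}
together with its analogue for $v$ (which additionally contains the cross term $r\int_\Omega uv^{q-1}$).

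The crucial point of departure from the proof of Lemma \ref{lemmaLp} is the choice of Young exponents. Instead of $\frac{q+1}{q}$ and $q+1$ as used in \eqref{3}--\eqref{4}, I would apply Young's inequality with the exponents $\frac{q+r_1-1}{q}$ and $\frac{q+r_1-1}{r_1-1}$ so that every RHS term collapses into the superquadratic damping with no size restriction on $\mu_1$:
\begin{equation*}
\frac{\chi_1(q-1)}{q}\int_\Omega u^q|\Delta w| \leq \frac{\mu_1}{4}\int_\Omega u^{q+r_1-1} + C\int_\Omega |\Delta w|^{\frac{q+r_1-1}{r_1-1}},
\end{equation*}
\begin{equation*}
\int_\Omega u^{q-1}w \leq \frac{\mu_1}{4}\int_\Omega u^{q+r_1-1} + C\int_\Omega w^{\frac{q+r_1-1}{r_1}},
\end{equation*}
and I would treat the coupling term analogously by $r\int_\Omega uv^{q-1} \leq \frac{\mu_2}{4}\int_\Omega v^{q+r_2-1} + C\int_\Omega u^{(q+r_2-1)/r_2}$, with the latter integral furnished by the preceding stage of the iteration.

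Mimicking \eqref{9}--\eqref{12}, I would then multiply by $e^{(q+r_i-1)t}$, integrate, and invoke Lemma \ref{b} with $\gamma = \frac{q+r_i-1}{r_i-1}$ to dominate $\int_0^t e^{\gamma s}\|w\|_{W^{2,\gamma}}^{\gamma}\,ds$ by $\int_0^t e^{\gamma s}\|u+v\|_{L^\gamma}^{\gamma}\,ds$. The initial fuel for the iteration is the space-time integrability $\int_t^{t+1}(\|u\|_{L^{r_1}}^{r_1} + \|v\|_{L^{r_2}}^{r_2})\,ds \leq C$ already encoded in \eqref{51}; since each iteration multiplies the available integrability exponent by $\min\{r_1, r_2\}-1 > 1$, finitely many iterations produce a pointwise-in-time $L^{q_0}$ bound with $q_0 > N/2$. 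At that point Lemma \ref{d} provides $\|\nabla w\|_{L^\infty}$, and the standard Moser-type testing argument of \eqref{27}--\eqref{42} closes the scheme, producing the bound \eqref{pq} for arbitrary $p, q > 1$ without any size assumption on $\mu_i$.

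The hard part will be the coordinated bookkeeping of the coupled $u$-$v$ iteration: the $w$-equation ties the two estimates together through $\|u+v\|_{L^\gamma}$, and the cross term $r\int_\Omega uv^{q-1}$ forces the $v$-estimate at level $q$ to lean on a $u$-estimate at the lower exponent $(q+r_2-1)/r_2$. Once the scheme is indexed by a common exponent $\gamma_{k+1} = (\min\{r_1, r_2\}-1)\gamma_k$ and the two estimates are advanced in lockstep, the argument should close cleanly, as all remaining calculations are routine adaptations of those already carried out explicitly in Lemma \ref{lemmaLp}.
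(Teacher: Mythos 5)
Your proposal is essentially sound and shares the paper's two key ingredients (Young's inequality with the $r_i$-adapted exponents $\frac{p+r_1-1}{r_1-1}$, $\frac{q+r_2-1}{r_2-1}$, and the maximal Sobolev regularity of Lemma \ref{b}), but it is organized differently. The paper performs no iteration at all: it runs the weighted-in-time estimates for $u$ at level $p$ and for $v$ at level $q$ simultaneously, and observes that because $r_i>2$ (equivalently $(r_1-1)(r_2-1)>1$) one may choose $p$ and $q$ large with $\frac{q+r_2-1}{r_2-1}<p+r_1-1$ and $\frac{p+r_1-1}{r_1-1}<q+r_2-1$ (conditions \eqref{64}--\eqref{65}), so that every coupling term — the $\|u+v\|_{L^\gamma}^\gamma$ produced by the $w$-equation and the cross term alike — is absorbed in one shot by the retained space-time damping integrals $\int\!\!\int u^{p+r_1-1}$ and $\int\!\!\int v^{q+r_2-1}$ via Young's inequality; the initial data at time $s_0$ is supplied by \eqref{s0}, and no passage through $\|\nabla w\|_{L^{\infty}}$ or the Gagliardo--Nirenberg machinery of \eqref{27}--\eqref{42} is needed. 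Your upward bootstrap with gain factor $\min\{r_1,r_2\}-1>1$ starting from \eqref{51}, followed by the switch to the gradient-based argument once $q_0>N/2$ is reached, should also close, at the price of heavier bookkeeping; the paper's coupled one-shot choice of $(p,q)$ is the cleaner device and is the genuinely new point of this lemma relative to Lemma \ref{lemmaLp}. One slip to correct: testing the $v$-equation against $v^{q-1}$ produces the cross term $r\int_{\Omega}uv^{q}$, not $r\int_{\Omega}uv^{q-1}$, so Young's inequality yields $C\int_{\Omega}u^{\frac{q+r_2-1}{r_2-1}}$ rather than $C\int_{\Omega}u^{\frac{q+r_2-1}{r_2}}$; fortunately this is the same exponent already demanded by the $w$-coupling, so it does not alter the structure of either scheme. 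Also note that reaching $q_0>N/2$ gives $\|\nabla w\|_{L^{2q_0}}$ via Lemma \ref{d} (which is what \eqref{27}--\eqref{42} actually use), not $\|\nabla w\|_{L^{\infty}}$; the latter requires an $L^{\gamma}$ bound with $\gamma>N$, which your iteration can of course also deliver.
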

\begin{proof}
We test the first equation in \eqref{model} against $u^{p-1}$ ($p>1$) and integrate by parts to see that
\begin{equation}\label{54}
\begin{aligned}
&\frac{1}{p}\frac{d}{dt}\int_{\Omega}u^{p}+(p-1)\int_{\Omega}u^{p-2}|\nabla u|^{2}\\
\leq& \frac{p-1}{p}\chi_{1}\int_{\Omega}u^{p}|\Delta w|+\int_{\Omega}u^{p-1}w-\mu_{1}\int_{\Omega}u^{p+r_{1}-1}\ \ \ \mbox{for all}\ t \in(0,T_{max}).
\end{aligned}
\end{equation}
Here by Young's inequality, for some positive constants $C_{i}$ $(i=1, .., 3)$ we obtain
\begin{equation}\label{55}
\frac{p-1}{p}\chi_{1}\int_{\Omega}u^{p}|\Delta w|\leq \frac{\mu_{1}}{4}\int_{\Omega}u^{p+r_{1}-1}+C_{1}\int_{\Omega}|\Delta w|^{\frac{p+r_{1}-1}{r_{1}-1}}
\end{equation}
and
\begin{equation}\label{56}
\begin{aligned}
\int_{\Omega}u^{p-1}w\leq& \frac{\mu_{1}}{4}\int_{\Omega}u^{p+r_{1}-1}+C_{2}\int_{\Omega} w^{\frac{p+r_{1}-1}{r_{1}}}\\
\leq& C_{1}\int_{\Omega}w^{\frac{p+r_{1}-1}{r_{1}-1}}+\frac{\mu_{1}}{4}\int_{\Omega}u^{p+r_{1}-1}+C_{3}.
\end{aligned}
\end{equation}
Then the combination of \eqref{55}-\eqref{56} and \eqref{54} gives
\begin{equation}\label{57}
\begin{aligned}
&\frac{d}{dt}\int_{\Omega}u^{p}+\frac{p+r_{1}-1}{r_{1}-1}\int_{\Omega}u^{p}\\
\leq& C_{1}p\left(\int_{\Omega}|\Delta w|^{\frac{p+r_{1}-1}{r_{1}-1}}+\int_{\Omega}w^{\frac{p+r_{1}-1}{r_{1}-1}}\right)+\frac{p+r_{1}-1}{r_{1}-1}\int_{\Omega}u^{p}-\frac{\mu_{1}p}{2}\int_{\Omega}u^{p+r_{1}-1}+C_{3}p\\
\leq& C_{1}p\left(\int_{\Omega}|\Delta w|^{\frac{p+r_{1}-1}{r_{1}-1}}+\int_{\Omega}w^{\frac{p+r_{1}-1}{r_{1}-1}}\right)-\frac{\mu_{1}p}{4}\int_{\Omega}u^{p+r_{1}-1}+C_{4}\ \ \ \mbox{for all}\ t \in(0,T_{max})
\end{aligned}
\end{equation}
with some constant $C_{4}>0$, where we have used Young's inequality. Hence for all $t\in(0,T_{max})$, \eqref{57} can be rewritten as
\begin{equation}\label{58}
\begin{aligned}
&\frac{d}{dt}\left(e^{\frac{p+r_{1}-1}{r_{1}-1}t}\|u(\cdot,t)\|_{L^{p}(\Omega)}^{p}\right)\\
\leq& \left[C_{1}p\left(\int_{\Omega}|\Delta w|^{\frac{p+r_{1}-1}{r_{1}-1}}+\int_{\Omega}w^{\frac{p+r_{1}-1}{r_{1}-1}}\right)-\frac{\mu_{1}p}{4}\int_{\Omega}u^{p+r_{1}-1}+C_{4}\right]e^{\frac{p+r_{1}-1}{r_{1}-1}t}.
\end{aligned}
\end{equation}
Let $s_{0}$ be the same as in \eqref{s0}. Integrating \eqref{58} over $[s_{0},t)$, by means of Lemma \ref{b} and Young's inequality we can find positive constants $C_{i}$ $(i=5, ..., 7)$ and $C_{\ast}$ such that for all $t\in(s_{0},T_{max})$,
\begin{equation}\label{59}
\begin{aligned}
&\|u(\cdot, t)\|_{L^{p}(\Omega)}^{p}\\
\leq &e^{-\frac{p+r_{1}-1}{r_{1}-1}(t-s_{0})}\|u(\cdot, s_{0})\|_{L^{p}(\Omega)}^{p}+C_{1}pe^{-\frac{p+r_{1}-1}{r_{1}-1}t}\int_{s_{0}}^{t}e^{\frac{p+r_{1}-1}{r_{1}-1}s}\int_{\Omega}\left(|\Delta w|^{\frac{p+r_{1}-1}{r_{1}-1}}+w^{\frac{p+r_{1}-1}{r_{1}-1}}\right)ds\\
&+C_{4}\int_{s_{0}}^{t}e^{-\frac{p+r_{1}-1}{r_{1}-1}(t-s)}ds-\frac{\mu_{1}p}{4}e^{-\frac{p+r_{1}-1}{r_{1}-1}t}\int_{s_{0}}^{t}e^{\frac{p+r_{1}-1}{r_{1}-1}s}\int_{\Omega}u^{p+r_{1}-1}ds\\
\leq  &C_{1}C_{\ast}pe^{-\frac{p+r_{1}-1}{r_{1}-1}t}\int_{s_{0}}^{t}e^{\frac{p+r_{1}-1}{r_{1}-1}s}\|u+v\|_{L^{\frac{p+r_{1}-1}{r_{1}-1}}(\Omega)}^{\frac{p+r_{1}-1}{r_{1}-1}}ds-\frac{\mu_{1}p}{4}e^{-\frac{p+r_{1}-1}{r_{1}-1}t}\int_{s_{0}}^{t}e^{\frac{p+r_{1}-1}{r_{1}-1}s}\int_{\Omega}u^{p+r_{1}-1}ds\\
&+C_{1}C_{\ast}pe^{-\frac{p+r_{1}-1}{r_{1}-1}(t-s_{0})}\left(\|w(\cdot,s_{0})\|_{L^{\frac{p+r_{1}-1}{r_{1}-1}}(\Omega)}^{\frac{p+r_{1}-1}{r_{1}-1}}+\|\Delta w(\cdot,s_{0})\|_{L^{\frac{p+r_{1}-1}{r_{1}-1}}(\Omega)}^{\frac{p+r_{1}-1}{r_{1}-1}}\right)+C_{5}\\
\leq  &C_{1}C_{\ast}p2^{\frac{p}{r_{1}-1}}\left(\int_{s_{0}}^{t}e^{-\frac{p+r_{1}-1}{r_{1}-1}(t-s)}\|u(\cdot,s)\|_{L^{\frac{p+r_{1}-1}{r_{1}-1}}(\Omega)}^{\frac{p+r_{1}-1}{r_{1}-1}}ds
+\int_{s_{0}}^{t}e^{-\frac{p+r_{1}-1}{r_{1}-1}(t-s)}\|v(\cdot,s)\|_{L^{\frac{p+r_{1}-1}{r_{1}-1}}(\Omega)}^{\frac{p+r_{1}-1}{r_{1}-1}}ds\right)\\
&-\frac{\mu_{1}p}{4}e^{-\frac{p+r_{1}-1}{r_{1}-1}t}\int_{s_{0}}^{t}e^{\frac{p+r_{1}-1}{r_{1}-1}s}\int_{\Omega}u^{p+r_{1}-1}ds+C_{6}\\
\leq&C_{1}C_{\ast}p2^{\frac{p}{r_{1}-1}}\int_{s_{0}}^{t}e^{-\frac{p+r_{1}-1}{r_{1}-1}(t-s)}\|v(\cdot,s)\|_{L^{\frac{p+r_{1}-1}{r_{1}-1}}(\Omega)}^{\frac{p+r_{1}-1}{r_{1}-1}}ds-\frac{\mu_{1}p}{8}\int_{s_{0}}^{t}e^{-\frac{p+r_{1}-1}{r_{1}-1}(t-s)}\int_{\Omega}u^{p+r_{1}-1}ds+C_{7}.
\end{aligned}
\end{equation}

Likewise, multiplying the second equation by $v^{q-1}$ $(q>1)$, some straightforward computations on the basis of integration by parts and Young's inequality show that
\begin{equation}\label{60}
\begin{aligned}
&\frac{d}{dt}\int_{\Omega}v^{q}+\frac{q+r_{2}-1}{r_{2}-1}\int_{\Omega}v^{q}\\
\leq& C_{8}q\left(\int_{\Omega}|\Delta w|^{\frac{q+r_{2}-1}{r_{2}-1}}+\int_{\Omega}w^{\frac{q+r_{2}-1}{r_{2}-1}}\right)+rq\int_{\Omega}uv^{q}-\frac{\mu_{2}q}{4}\int_{\Omega}v^{q+r_{2}-1}+C_{9}\ \ \ \mbox{for all}\ t \in(0,T_{max})
\end{aligned}
\end{equation}
with some positive constants $C_{8}$ and $C_{9}$. Then for all $t\in(0,T_{max}$), we further employ Young's inequality to pick positive constants $C_{10}$ such that
\begin{equation*}
r\int_{\Omega}uv^{q}\leq \frac{\mu_{2}}{8}\int_{\Omega}v^{q+r_{2}-1}+C_{10}\int_{\Omega}u^{\frac{q+r_{2}-1}{r_{2}-1}},
\end{equation*}
which updates \eqref{60} as
\begin{equation}\label{63}
\begin{aligned}
&\frac{d}{dt}\int_{\Omega}v^{q}+\frac{q+r_{2}-1}{r_{2}-1}\int_{\Omega}v^{q}\\
\leq& C_{8}q\left(\int_{\Omega}|\Delta w|^{\frac{q+r_{2}-1}{r_{2}-1}}+\int_{\Omega}w^{\frac{q+r_{2}-1}{r_{2}-1}}\right)+C_{10}q\int_{\Omega}u^{\frac{q+r_{2}-1}{r_{2}-1}}-\frac{\mu_{2}q}{8}\int_{\Omega}v^{q+r_{2}-1}+C_{9}.
\end{aligned}
\end{equation}
Since $r_{i}>2$ $(i=1,2)$, we can choose suitably large $p$ and $q$ fulfilling both
\begin{equation}\label{64}
\frac{q+r_{2}-1}{r_{2}-1}<p+r_{1}-1
\end{equation}
and
\begin{equation}\label{65}
\frac{p+r_{1}-1}{r_{1}-1}<q+r_{2}-1.
\end{equation}
Therefore, integrating \eqref{63} over $[s_{0},t)$, Lemma \ref{b} and Young's inequality provide some positive constants $C_{i}$ $(i=11 ,..., 13)$ and $C_{\ast\ast}$ such that
\begin{equation}\label{66}
\begin{aligned}
&\|v(\cdot, t)\|_{L^{q}(\Omega)}^{q}+\frac{\mu_{2}q}{8}e^{-\frac{q+r_{2}-1}{r_{2}-1}t}\int_{s_{0}}^{t}e^{\frac{q+r_{2}-1}{r_{2}-1}s}\int_{\Omega}v^{q+r_{2}-1}ds\\
\leq &e^{-\frac{q+r_{2}-1}{r_{2}-1}(t-s_{0})}\|v(\cdot, s_{0})\|_{L^{q}(\Omega)}^{q}+C_{8}qe^{-\frac{q+r_{2}-1}{r_{2}-1}t}\int_{s_{0}}^{t}e^{\frac{q+r_{2}-1}{r_{2}-1}s}\int_{\Omega}\left(|\Delta w|^{\frac{q+r_{2}-1}{r_{2}-1}}+w^{\frac{q+r_{2}-1}{r_{2}-1}}\right)ds\\
&+C_{10}qe^{-\frac{q+r_{2}-1}{r_{2}-1}t}\int_{s_{0}}^{t}e^{\frac{q+r_{2}-1}{r_{2}-1}s}\int_{\Omega}u^{\frac{q+r_{2}-1}{r_{2}-1}}ds+C_{9}\int_{s_{0}}^{t}e^{-\frac{q+r_{2}-1}{r_{2}-1}(t-s)}ds\\
\leq  &C_{8}C_{\ast\ast}qe^{-\frac{q+r_{2}-1}{r_{2}-1}t}\int_{s_{0}}^{t}e^{\frac{q+r_{2}-1}{r_{2}-1}s}\|u+v\|_{L^{\frac{q+r_{2}-1}{r_{2}-1}}(\Omega)}^{\frac{q+r_{2}-1}{r_{2}-1}}ds+C_{10}qe^{-\frac{q+r_{2}-1}{r_{2}-1}t}\int_{s_{0}}^{t}e^{\frac{q+r_{2}-1}{r_{2}-1}s}\int_{\Omega}u^{\frac{q+r_{2}-1}{r_{2}-1}}ds\\
&+C_{8}C_{\ast\ast}qe^{-\frac{q+r_{2}-1}{r_{2}-1}(t-s_{0})}\left(\|w(\cdot,s_{0})\|_{L^{\frac{q+r_{2}-1}{r_{2}-1}}(\Omega)}^{\frac{q+r_{2}-1}{r_{2}-1}}+\|\Delta w(\cdot,s_{0})\|_{L^{\frac{q+r_{2}-1}{r_{2}-1}}(\Omega)}^{\frac{q+r_{2}-1}{r_{2}-1}}\right)+C_{11}\\
\leq  &C_{8}C_{\ast\ast}q2^{\frac{q}{r_{2}-1}}\left(\int_{s_{0}}^{t}e^{-\frac{q+r_{2}-1}{r_{2}-1}(t-s)}\|u(\cdot,s)\|_{L^{\frac{q+r_{2}-1}{r_{2}-1}}(\Omega)}^{\frac{q+r_{2}-1}{r_{2}-1}}ds
+\int_{s_{0}}^{t}e^{-\frac{q+r_{2}-1}{r_{2}-1}(t-s)}\|v(\cdot,s)\|_{L^{\frac{q+r_{2}-1}{r_{2}-1}}(\Omega)}^{\frac{q+r_{2}-1}{r_{2}-1}}ds\right)\\
&+C_{10}q\int_{s_{0}}^{t}e^{-\frac{q+r_{2}-1}{r_{2}-1}(t-s)}\|u(\cdot,s)\|_{L^{\frac{q+r_{2}-1}{r_{2}-1}}(\Omega)}^{\frac{q+r_{2}-1}{r_{2}-1}}+C_{12}\\
\leq  &\left(C_{8}C_{\ast\ast}q2^{\frac{q}{r_{2}-1}}+C_{10}q\right)\int_{s_{0}}^{t}e^{-\frac{q+r_{2}-1}{r_{2}-1}(t-s)}\|u(\cdot,s)\|_{L^{\frac{q+r_{2}-1}{r_{2}-1}}(\Omega)}^{\frac{q+r_{2}-1}{r_{2}-1}}ds\\
+&\frac{\mu_{2}q}{16}e^{-\frac{q+r_{2}-1}{r_{2}-1}t}\int_{s_{0}}^{t}e^{\frac{q+r_{2}-1}{r_{2}-1}s}\int_{\Omega}v^{q+r_{2}-1}ds+C_{13}\ \ \ \mbox{for all}\ t \in(s_{0},T_{max}).
\end{aligned}
\end{equation}
Combining \eqref{59} with \eqref{66} and recalling the facts \eqref{64}-\eqref{65}, we use Young's inequality to see that there exist positive constants $C_{14}=\max\{C_{8}C_{\ast\ast}q2^{\frac{q}{r_{2}-1}}+C_{10}q,C_{1}C_{\ast}p2^{\frac{p}{r_{1}-1}}\}$, $C_{15}=C_{7}+C_{13}$ and $C_{16}$ fulfilling
\begin{equation*}
\begin{aligned}
&\|u(\cdot, t)\|_{L^{p}(\Omega)}^{p}+\|v(\cdot, t)\|_{L^{q}(\Omega)}^{q}\\
\leq  &C_{14}\left(\int_{s_{0}}^{t}e^{-\frac{q+r_{2}-1}{r_{2}-1}(t-s)}\|u(\cdot,s)\|_{L^{\frac{q+r_{2}-1}{r_{2}-1}}(\Omega)}^{\frac{q+r_{2}-1}{r_{2}-1}}ds+\int_{s_{0}}^{t}e^{-\frac{p+r_{1}-1}{r_{1}-1}(t-s)}\|v(\cdot,s)\|_{L^{\frac{p+r_{1}-1}{r_{1}-1}}(\Omega)}^{\frac{p+r_{1}-1}{r_{1}-1}}ds\right)
\\
&-\frac{\mu_{1}p}{8}\int_{s_{0}}^{t}e^{-\frac{p+r_{1}-1}{r_{1}-1}(t-s)}\int_{\Omega}u^{p+r_{1}-1}ds-\frac{\mu_{2}q}{16}\int_{s_{0}}^{t}e^{-\frac{q+r_{2}-1}{r_{2}-1}(t-s)}\int_{\Omega}v^{q+r_{2}-1}ds+C_{15}\\
\leq  &-\frac{\mu_{1}p}{16}\int_{s_{0}}^{t}e^{-\frac{p+r_{1}-1}{r_{1}-1}(t-s)}\int_{\Omega}u^{p+r_{1}-1}ds-\frac{\mu_{2}q}{32}\int_{s_{0}}^{t}e^{-\frac{q+r_{2}-1}{r_{2}-1}(t-s)}\int_{\Omega}v^{q+r_{2}-1}ds+C_{16}\\
\leq& C_{16}\ \ \ \mbox{for all}\ t \in(s_{0},T_{max}),
\end{aligned}
\end{equation*}
which together with \eqref{s0} yields \eqref{pq}.
\end{proof}

In view of Lemma \ref{d} and the Moser-type iteration method, the above estimates enable us to infer the following boundedness properties.
\begin{lemma}\label{lemmaintensify}
Under the assumptions of Theorem \ref{theorem1}, there exists a constant $C>0$ independent of $t$ such that the solution to model \eqref{model} satisfies
\begin{equation}\label{intensify1}
\|w(\cdot,t)\|_{W^{1,\infty}(\Omega)}\leq C\ \ \ \mbox{for all}\ t \in(0,T_{max})
\end{equation}
and
\begin{equation}\label{intensify2}
\|u(\cdot,t)\|_{L^{\infty}(\Omega)}+\|v(\cdot,t)\|_{L^{\infty}(\Omega)}\leq C\ \ \ \mbox{for all}\ t \in(0,T_{max}).
\end{equation}
\end{lemma}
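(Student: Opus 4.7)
The plan is to lift the uniform $L^{p}$ control of $u$ and $v$ obtained in Lemmas \ref{lemmaLp} and \ref{p-q} to $L^{\infty}$ control, while along the way upgrading $w$ from $W^{1,q}$ to $W^{1,\infty}$. The starting point is that for every $p>1$ we already have $\|u(\cdot,t)\|_{L^{p}(\Omega)}+\|v(\cdot,t)\|_{L^{p}(\Omega)}\leq C$ uniformly in $t\in(0,T_{max})$, so the source term $u+v$ in the third equation of \eqref{model} lies in $L^{\infty}((0,T_{max});L^{p}(\Omega))$ for every $p>1$.

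First, I would apply Lemma \ref{d} to the $w$-equation with the source $g=u+v$. Choosing $\gamma>N$ in that lemma, the second alternative of \eqref{d3} gives $\|w(\cdot,t)\|_{W^{1,\infty}(\Omega)}\leq C$ for all $t\in(0,T_{max})$. This immediately produces \eqref{intensify1} and, more importantly for the next step, provides a uniform pointwise bound on both $w$ and $\nabla w$.

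Next, with $\|\nabla w\|_{L^{\infty}}+\|w\|_{L^{\infty}}$ under control, the first equation in \eqref{model} becomes a linear convection--diffusion equation for $u$ with bounded drift, bounded forcing $w$, and a dissipative nonlinearity $-\mu_{1}u^{r_{1}}$ (with $r_{1}\geq 2$). A standard Moser--Alikakos iteration, testing with $u^{p-1}$, using Young's inequality to absorb $\chi_{1}(p-1)\int_{\Omega}u^{p-1}\nabla u\cdot\nabla w$ into the diffusive term $(p-1)\int_{\Omega}u^{p-2}|\nabla u|^{2}$ (the cost being of the form $C p\int_{\Omega} u^{p}\|\nabla w\|_{L^{\infty}}^{2}$), then combining with the Gagliardo--Nirenberg inequality and invoking Lemma \ref{c} across a sequence of exponents $p_{k}=2^{k}p_{0}$, yields $\|u(\cdot,t)\|_{L^{\infty}(\Omega)}\leq C$. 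For $v$, the only extra term is $ruv$; since $u$ is now bounded in $L^{\infty}$, this contributes only a bounded zero-order factor $ru$ multiplying $v$, so the same Moser iteration applies verbatim to the $v$-equation and yields $\|v(\cdot,t)\|_{L^{\infty}(\Omega)}\leq C$, establishing \eqref{intensify2}.

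The main technical obstacle is the convective term $-\chi_{i}\nabla\cdot(\cdot\,\nabla w)$ during iteration: the Young inequality estimate must produce a constant whose growth in $p$ is controlled well enough for the iterative scheme to converge, which is precisely why the preliminary $W^{1,\infty}$-bound on $w$ is so crucial (had we only retained $W^{1,q}$ for finite $q$, a H\"older step would introduce a $p$-dependent loss that would complicate the induction). Once $\nabla w$ is uniformly bounded, the iteration is entirely routine and relies on Lemmas \ref{GN}, \ref{c}, together with the local bound \eqref{s0} to absorb the initial contribution at each step.
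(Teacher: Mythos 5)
Your proposal is correct and follows essentially the same route as the paper: the uniform $L^{p}$ bounds from Lemmas \ref{lemmaLp} and \ref{p-q} feed into Lemma \ref{d} with $\gamma>N$ to give \eqref{intensify1}, after which a Moser--Alikakos iteration with the now-bounded $\nabla w$ yields \eqref{intensify2}. The only (harmless) difference is organizational: you first close the iteration for $u$ alone and then treat $ruv$ as a bounded zero-order term in the $v$-equation, whereas the paper runs the iteration for $u$ and $v$ jointly, splitting $r\int_{\Omega}uv^{p_{k}}$ by Young's inequality as in \eqref{99}--\eqref{100}.
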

\begin{proof}
By \eqref{Lp} and \eqref{pq}, we can find a constant $C_{1}>0$ such that
\begin{equation*}
\|u+v\|_{L^{2N}(\Omega)}\leq C_{1}\ \ \ \mbox{for all}\ t \in(0,T_{max}).
\end{equation*}
Thus Lemma \ref{d} for $\gamma>N$ warrants that \eqref{intensify1} holds. Next, let $p=p_{k}:=2^{k}p_{0}$ with $p_{0}>\frac{N}{2}$ and $M_{k}:=\max\bigg\{1,\sup\limits_{t\in(0,T_{max})}\left(\int_{\Omega}u^{p_{k}}+\int_{\Omega}v^{p_{k}}\right)\bigg\}$. We emphasize that the constants $C_{i}$ $(i\geq2)$ are all independent of $k$. Based on the boundedness of $\| w(\cdot, t)\|_{W^{1,\infty}(\Omega)}$, \eqref{27} entails that
\begin{equation}\label{98}
\begin{aligned}
&\frac{1}{p_{k}}\frac{d}{dt}\int_{\Omega}u^{p_{k}}+(p_{k}-1)\int_{\Omega}u^{p_{k}-2}|\nabla u|^{2}\\
\leq&\chi_{1}(p_{k}-1)C_{2}\int_{\Omega}u^{p_{k}-1}|\nabla u|+C_{2}\int_{\Omega}u^{p_{k}-1}\\
\leq& \chi_{1}(p_{k}-1)C_{2}\int_{\Omega}u^{p_{k}-1}|\nabla u|+C_{2}\int_{\Omega}u^{p_{k}}+C_{3}\\
\leq& \frac{p_{k}-1}{2}\int_{\Omega}u^{p_{k}-2}|\nabla u|^{2}+\left(\frac{\chi_{1}^{2}C_{2}^{2}(p_{k}-1)}{2}+C_{2}\right)\int_{\Omega}u^{p_{k}}+C_{3}\\
\leq& \frac{p_{k}-1}{2}\int_{\Omega}u^{p_{k}-2}|\nabla u|^{2}+C_{4}p_{k}\int_{\Omega}u^{p_{k}}+C_{3}\ \ \ \mbox{for all}\ t \in(0,T_{max}),
\end{aligned}
\end{equation}
where we have applied Young's inequality. Similarly, \eqref{40} follows from Young's inequality that
\begin{equation}\label{99}
\begin{aligned}
&\frac{1}{p_{k}}\frac{d}{dt}\int_{\Omega}v^{p_{k}}+(p_{k}-1)\int_{\Omega}v^{p_{k}-2}|\nabla v|^{2}\\
\leq& \frac{p_{k}-1}{2}\int_{\Omega}v^{p_{k}-2}|\nabla v|^{2}+C_{5}p_{k}\int_{\Omega}v^{p_{k}}+r\int_{\Omega}uv^{p_{k}}+C_{6}\\
\leq& \frac{p_{k}-1}{2}\int_{\Omega}v^{p_{k}-2}|\nabla v|^{2}+C_{5}p_{k}\int_{\Omega}v^{p_{k}}+\frac{rp_{k}}{p_{k}+1}\int_{\Omega}v^{p_{k}+1}+\frac{r}{p_{k}+1}\int_{\Omega}u^{p_{k}+1}+C_{6}
\end{aligned}
\end{equation}
for all  $t\in(0,T_{max})$. Combining \eqref{98} with \eqref{99}, for a constant $C_{7}=2(1-\frac{1}{p_{0}})>0$ a straightforward computation shows
\begin{equation}\label{100}
\begin{aligned}
&\frac{d}{dt}\left(\int_{\Omega}u^{p_{k}}+\int_{\Omega}u^{p_{k}}\right)+\int_{\Omega}u^{p_{k}}+\int_{\Omega}v^{p_{k}}+C_{7}\int_{\Omega}|\nabla u^{\frac{p_{k}}{2}}|^{2}+C_{7}\int_{\Omega}|\nabla v^{\frac{p_{k}}{2}}|^{2}\\
\leq& \left(C_{4}p_{k}^{2}+1\right)\int_{\Omega}u^{p_{k}}+\left(C_{5}p_{k}^{2}+1\right)\int_{\Omega}v^{p_{k}}+\frac{rp_{k}^{2}}{p_{k}+1}\int_{\Omega}v^{p_{k}+1}+\frac{rp_{k}}{p_{k}+1}\int_{\Omega}u^{p_{k}+1}+C_{8}p_{k}\\
\leq& C_{9}p_{k}^{2}\int_{\Omega}u^{p_{k}+1}+C_{10}p_{k}^{2}\int_{\Omega}v^{p_{k}+1}+C_{11}p_{k}^{2}\ \ \ \mbox{for all}\ t \in(0,T_{max}),
\end{aligned}
\end{equation}
because $a^{p_{k}}\leq a^{p_{k}+1}+1$ holds by Young's inequality. Notably, \eqref{100} is similar to (2.12) in \cite{TX2022}. Then we can proceed with the Moser iteration procedure that has the same steps as in \cite{TX2022} to derive \eqref{intensify2}. For brevity, we omit the detailed calculations.
\end{proof}

\textbf{Proof of Theorem 1.1.} Thanks to \eqref{intensify2}, this ensures $T_{max}=\infty$, otherwise contradiction arises from \eqref{local2}. Therefore, we directly establish the global existence and boundedness of classical solution to model \eqref{model}. \ \ \ \ $\blacksquare$

\section{Global existence of the weak solution}
\setcounter{equation}{0}
\indent

This section is concerned with the global existence of weak solution to system \eqref{model} for any $\mu_{i}>0$ and $r_{1}=r_{2}=2$. To better handle the chemotactic terms, we develop the following appropriately regularized problem of \eqref{model}:

\begin{equation}\label{weak model}
\left\{\aligned
  &  u_{\varepsilon t}=\Delta u_{\varepsilon}-\chi_{1}\nabla\cdot(u_{\varepsilon}F_{\varepsilon}(u_{\varepsilon})\nabla w_{\varepsilon})+w_{\varepsilon}-\mu_{1}u_{\varepsilon}^{2},&&x\in\Omega,t>0,\\
  &  v_{\varepsilon t}=\Delta v_{\varepsilon}-\chi_{2}\nabla\cdot(v_{\varepsilon}F_{\varepsilon}(v_{\varepsilon})\nabla w_{\varepsilon})+w_{\varepsilon}+ru_{\varepsilon}v_{\varepsilon}-\mu_{2}v_{\varepsilon}^{2},&&x\in\Omega,t>0,\\
  &  w_{\varepsilon t}=\Delta w_{\varepsilon}+u_{\varepsilon}+v_{\varepsilon}-w_{\varepsilon},&&x\in\Omega,t>0,\\
  &  \frac{\partial u_{\varepsilon}}{\partial \nu}=\frac{\partial v_{\varepsilon}}{\partial \nu}=\frac{\partial w_{\varepsilon}}{\partial \nu}=0,&&x\in\partial\Omega,t>0,\\
  &  u_{\varepsilon}(x,0)=u_{0}(x),v_{\varepsilon}(x,0)=v_{0}(x),w_{\varepsilon}(x,0)=w_{0}(x),&&x\in\Omega,
\endaligned\right.
\end{equation}
where
\begin{equation}\label{F}
F_{\varepsilon}(s)=\frac{1}{(1+\varepsilon s)^{N+1}} \ \ \ \mbox{for all}\ s\geq0 \ \ \mbox{and}\ \ N\geq3.
\end{equation}

We first show the local solvability and extendibility of this system, which can be obtained by using a suitable fixed-point method together with the parabolic regularity theory.
\begin{lemma}\label{Local2}
Let $\Omega\subset\mathbb R^{N}$($N\geq3$) be a bounded domain with smooth boundary, and let the initial data comply with \eqref{initial}. Then one can find a maximal $T_{max,\varepsilon}\in(0,\infty]$ and a uniquely determined triple $(u_{\varepsilon},v_{\varepsilon},w_{\varepsilon})$ satisfying
\begin{equation}\label{local2!}
\left\{\aligned
&u_{\varepsilon}\in C^{0}(\overline{\Omega}\times[0,T_{max,\varepsilon}))\cap C^{2,1}(\overline{\Omega}\times(0,T_{max,\varepsilon})),
&&\\
&v_{\varepsilon}\in C^{0}(\overline{\Omega}\times[0,T_{max,\varepsilon}))\cap C^{2,1}(\overline{\Omega}\times(0,T_{max,\varepsilon})),
&&\\
&w_{\varepsilon}\in C^{0}(\overline{\Omega}\times[0,T_{max,\varepsilon}))\cap C^{2,1}(\overline{\Omega}\times(0,T_{max,\varepsilon})),
\endaligned\right.
\end{equation}
which solves \eqref{weak model} classically in $\Omega\times(0,T_{max,\varepsilon})$. Moreover, if $T_{max,\varepsilon}<\infty$, then
\begin{center}
$\|u_{\varepsilon}(\cdot,t)\|_{L^{\infty}(\Omega)}+\|v_{\varepsilon}(\cdot,t)\|_{L^{\infty}(\Omega)}\rightarrow\infty$ as $t\nearrow T_{max}$.
\end{center}
\end{lemma}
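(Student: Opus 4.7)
The plan is to follow the standard recipe for parabolic chemotaxis systems: set up a local contraction map in a space of continuous functions, bootstrap the regularity by parabolic Schauder and $L^p$ theory, and then close the argument by a continuation criterion. The crucial feature of the regularization is that $sF_\varepsilon(s)=s(1+\varepsilon s)^{-(N+1)}$ is uniformly bounded on $[0,\infty)$ (by a constant depending only on $\varepsilon$ and $N$), so the chemotactic drift becomes a tame perturbation of the heat equation rather than a genuinely cross-diffusive term.

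First I would fix $R$ larger than the relevant initial-data norms and, for $T>0$ to be chosen later, introduce the closed convex subset $\mathcal{S}_T$ of $C^0([0,T];C^0(\overline{\Omega}))^2\times C^0([0,T];W^{1,\theta}(\Omega))$ consisting of nonnegative triples $(\bar u,\bar v,\bar w)$ of norm at most $R$. For $(\bar u,\bar v,\bar w)\in\mathcal{S}_T$ I define $\Phi(\bar u,\bar v,\bar w):=(u,v,w)$ as the unique solutions of the three \emph{decoupled} linear Neumann problems
$$u_t=\Delta u-\chi_1\nabla\cdot\bigl(\bar u F_\varepsilon(\bar u)\nabla\bar w\bigr)+\bar w-\mu_1\bar u^{2},$$
$$v_t=\Delta v-\chi_2\nabla\cdot\bigl(\bar v F_\varepsilon(\bar v)\nabla\bar w\bigr)+\bar w+r\bar u\bar v-\mu_2\bar v^{2},$$
$$w_t=\Delta w-w+\bar u+\bar v,$$
with the prescribed initial data. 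Since $\bar u F_\varepsilon(\bar u)\nabla\bar w\in L^\infty((0,T);L^\theta(\Omega;\mathbb{R}^N))$ and all remaining source terms are bounded, each of these is a genuine linear inhomogeneous parabolic problem; standard Neumann-heat-semigroup Duhamel estimates place $u,v\in C^0([0,T];C^0(\overline{\Omega}))$, while Lemma \ref{d} places $w\in C^0([0,T];W^{1,\theta}(\Omega))$.

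For $T=T(\varepsilon,R)$ chosen sufficiently small one verifies $\Phi(\mathcal{S}_T)\subset\mathcal{S}_T$ and that $\Phi$ is a strict contraction. The Lipschitz bounds for the divergence-form drift are handled by the smoothing estimate $\|e^{t(\Delta-1)}\nabla\cdot\mathbf{f}\|_{L^\infty}\leq C\,t^{-1/2-N/(2q)}\|\mathbf{f}\|_{L^q}$ of the Neumann heat semigroup with $q>N$, combined with smoothness of $F_\varepsilon$ and the $L^\theta$-control of $\nabla\bar w$. Banach's fixed-point theorem then produces a unique solution $(u_\varepsilon,v_\varepsilon,w_\varepsilon)$ to \eqref{weak model} on $[0,T]$. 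Nonnegativity is secured by applying the parabolic maximum principle to each equation separately (the terms $-\mu_i\bar u^2,-\mu_i\bar v^2,-w$ carry the correct sign to preserve positivity at the boundary of the cone). A standard bootstrap then upgrades the solution to the regularity class \eqref{local2!}: once $u_\varepsilon,v_\varepsilon$ are continuous, Lemma \ref{d} puts $w_\varepsilon$ in $W^{1,\theta}$ and hence $\nabla w_\varepsilon$ in $L^\infty$, whereupon the coefficients of the first two equations become Hölder continuous in space-time and Schauder theory yields $C^{2,1}$ regularity of $u_\varepsilon,v_\varepsilon$, which in turn elevates $w_\varepsilon$ to the same class.

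Finally, the extensibility criterion is established by the customary argument: define $T_{\max,\varepsilon}$ as the supremum of existence times of the classical solution and suppose toward contradiction that $T_{\max,\varepsilon}<\infty$ while $\|u_\varepsilon(\cdot,t)\|_{L^\infty(\Omega)}+\|v_\varepsilon(\cdot,t)\|_{L^\infty(\Omega)}\leq M$ on $[0,T_{\max,\varepsilon})$. Lemma \ref{d} then yields a uniform bound $\|w_\varepsilon(\cdot,t)\|_{W^{1,\theta}(\Omega)}\leq M'$, and the local existence step can be restarted from any $t$ sufficiently close to $T_{\max,\varepsilon}$ on an interval whose length depends only on $(\varepsilon,M,M')$ and not on $t$, contradicting the maximality of $T_{\max,\varepsilon}$. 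The main technical obstacle throughout is the contraction step, where one must produce quantitative Lipschitz constants for the drift $\nabla\cdot(\bar u F_\varepsilon(\bar u)\nabla\bar w)$ in a topology compatible with $L^\infty$-control of $u,v$; this is precisely what the divergence-form smoothing of the Neumann heat semigroup together with the smallness of $T$ delivers.
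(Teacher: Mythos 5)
Your argument is correct and is exactly the route the paper itself indicates: the paper states this lemma without proof, remarking only that it follows from "a suitable fixed-point method together with the parabolic regularity theory," which is precisely the Banach fixed-point/Duhamel scheme, Schauder bootstrap, and restart-type extensibility argument you spell out (the condition $\theta>N$ entering, as you note, through the integrability of $t^{-1/2-N/(2\theta)}$ in the divergence-form semigroup estimate). The only point I would tighten is nonnegativity: it should be derived for the fixed point itself (where $-\mu_i s^2$ and $-w$ vanish at the boundary of the cone and the remaining sources are nonnegative once the components are ordered), not for the decoupled iterates, whose frozen right-hand sides need not preserve the cone.
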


Some useful $\varepsilon$-independent properties of solutions are derived as follows.
\begin{lemma}\label{W1}
Let the conditions in Lemma \ref{Local2} hold. Then there exists a constant $C>0$ independent of $\varepsilon$ such that for any $\varepsilon\in(0,1)$,
\begin{equation}\label{w1}
\int_{\Omega}u_{\varepsilon}(\cdot,t)+\int_{\Omega}v_{\varepsilon}(\cdot,t)+\int_{\Omega}w_{\varepsilon}^{2}(\cdot,t)+\int_{\Omega}|\nabla w_{\varepsilon}(\cdot,t)|^{2}\leq C\ \ \ \mbox{for all}\ t \in(0,T_{max,\varepsilon})
\end{equation}
and
\begin{equation}\label{w2}
\int_{0}^{T}\int_{\Omega}\left(u_{\varepsilon}^{2}+v_{\varepsilon}^{2}+|\nabla w_{\varepsilon}|^{2}+|\Delta w_{\varepsilon}|^{2}\right)\leq C(T)\ \ \ \mbox{for all}\ T \in(0,T_{max,\varepsilon})
\end{equation}
as well as
\begin{equation}\label{w3}
\int_{t}^{t+\tau}\int_{\Omega}\left(u_{\varepsilon}^{2}+v_{\varepsilon}^{2}+|\nabla w_{\varepsilon}|^{2}+|\Delta w_{\varepsilon}|^{2}\right)\leq C\ \ \ \mbox{for all}\ t \in(0,T_{max,\varepsilon}-\tau),
\end{equation}
where $\tau=\min\{1,\frac{1}{4}T_{max,\varepsilon}\}$.

\end{lemma}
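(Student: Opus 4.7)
The plan is to first establish the $L^{1}$-bounds on $u_{\varepsilon}$ and $v_{\varepsilon}$ along with the local-in-time $L^{2}$-type estimates, and then use the third equation of \eqref{weak model} to transfer regularity to $w_{\varepsilon}$. Note that the factor $F_{\varepsilon}$ sits inside a divergence, so it disappears when we test with constants. Thus, integrating the first two equations of \eqref{weak model} over $\Omega$ produces exactly the same ODE system
\begin{equation*}
\tfrac{d}{dt}\!\!\int_{\Omega}\!u_{\varepsilon}+\mu_{1}\!\!\int_{\Omega}\!u_{\varepsilon}^{2}=\!\!\int_{\Omega}\!w_{\varepsilon},\quad \tfrac{d}{dt}\!\!\int_{\Omega}\!v_{\varepsilon}+\mu_{2}\!\!\int_{\Omega}\!v_{\varepsilon}^{2}=\!\!\int_{\Omega}\!w_{\varepsilon}+r\!\!\int_{\Omega}\!u_{\varepsilon}v_{\varepsilon},\quad \tfrac{d}{dt}\!\!\int_{\Omega}\!w_{\varepsilon}+\!\!\int_{\Omega}\!w_{\varepsilon}=\!\!\int_{\Omega}\!(u_{\varepsilon}+v_{\varepsilon})
\end{equation*}
as in Lemma~\ref{lemmaL1} (with $r_{1}=r_{2}=2$). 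Replaying the argument there verbatim, the auxiliary function $y_{\varepsilon}(t):=\frac{2L}{\mu_{1}}\int u_{\varepsilon}+\int v_{\varepsilon}+\frac{4L+2\mu_{1}}{\mu_{1}}\int w_{\varepsilon}$ satisfies $y_{\varepsilon}'+\tfrac12 y_{\varepsilon}+\tfrac{L}{2}\int u_{\varepsilon}^{2}+\tfrac{\mu_{2}}{4}\int v_{\varepsilon}^{2}\le C_{4}$ with constants $L,C_{4}$ independent of $\varepsilon$. An ODE comparison yields the $L^{1}$-parts of \eqref{w1} and, after integrating the same differential inequality over $[t,t+\tau]$, gives the $u_{\varepsilon}^{2}+v_{\varepsilon}^{2}$ contribution to \eqref{w3}; the corresponding contribution to \eqref{w2} follows by iterating this across $[0,T]$.

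Next, I would derive the $w_{\varepsilon}$-estimates by testing the third equation of \eqref{weak model} with $w_{\varepsilon}$ and with $-\Delta w_{\varepsilon}$. Testing with $w_{\varepsilon}$ yields
\begin{equation*}
\tfrac{1}{2}\tfrac{d}{dt}\!\!\int_{\Omega}\!w_{\varepsilon}^{2}+\!\!\int_{\Omega}\!|\nabla w_{\varepsilon}|^{2}+\!\!\int_{\Omega}\!w_{\varepsilon}^{2}=\!\!\int_{\Omega}\!(u_{\varepsilon}+v_{\varepsilon})w_{\varepsilon}\le\tfrac{1}{2}\!\!\int_{\Omega}\!w_{\varepsilon}^{2}+\!\!\int_{\Omega}\!u_{\varepsilon}^{2}+\!\!\int_{\Omega}\!v_{\varepsilon}^{2},
\end{equation*}
while testing with $-\Delta w_{\varepsilon}$ and using the Neumann boundary condition gives the analogous
\begin{equation*}
\tfrac{1}{2}\tfrac{d}{dt}\!\!\int_{\Omega}\!|\nabla w_{\varepsilon}|^{2}+\tfrac{1}{2}\!\!\int_{\Omega}\!|\Delta w_{\varepsilon}|^{2}+\!\!\int_{\Omega}\!|\nabla w_{\varepsilon}|^{2}\le\!\!\int_{\Omega}\!u_{\varepsilon}^{2}+\!\!\int_{\Omega}\!v_{\varepsilon}^{2}.
\end{equation*}
Adding these and applying Lemma~\ref{c} with $\alpha=1$, $h(t)=2(\|u_{\varepsilon}\|_{L^{2}}^{2}+\|v_{\varepsilon}\|_{L^{2}}^{2})$, whose local-in-time integral is already bounded uniformly in $\varepsilon$ by the first step, I obtain the pointwise bound on $\int_{\Omega}w_{\varepsilon}^{2}+\int_{\Omega}|\nabla w_{\varepsilon}|^{2}$ claimed in \eqref{w1}, together with the $\nabla w_{\varepsilon}$ and $\Delta w_{\varepsilon}$ contributions to both \eqref{w2} (by a direct time-integration) and \eqref{w3} (by integrating over $[t,t+\tau]$ and invoking the uniform $L^{1}_{t,\mathrm{loc}}$-control of $\|u_{\varepsilon}\|_{L^{2}}^{2}+\|v_{\varepsilon}\|_{L^{2}}^{2}$ established above).

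The main subtlety — and the step I would treat most carefully — is ensuring that every constant is genuinely independent of $\varepsilon$. This reduces to the observation that $F_{\varepsilon}\in(0,1]$ and $\int_{\Omega}\nabla\cdot(u_{\varepsilon}F_{\varepsilon}(u_{\varepsilon})\nabla w_{\varepsilon})=0$, so that $F_{\varepsilon}$ never appears in any of the scalar energy balances used above; the only $\varepsilon$-dependence lives in the chemotactic fluxes, which are bypassed by the test functions $1$, $w_{\varepsilon}$ and $-\Delta w_{\varepsilon}$. With this remark the preceding estimates combine into the three assertions \eqref{w1}--\eqref{w3}, completing the proof. $\blacksquare$
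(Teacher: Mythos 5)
Your proposal is correct and follows essentially the same route as the paper's proof: replay the $L^{1}$-argument of Lemma \ref{lemmaL1} (the cutoff $F_{\varepsilon}$ vanishes under spatial integration), then test the third equation with $w_{\varepsilon}$ and $-\Delta w_{\varepsilon}$ and invoke Lemma \ref{c} together with the uniform local-in-time control of $\int_{\Omega}(u_{\varepsilon}^{2}+v_{\varepsilon}^{2})$. The only cosmetic difference is that you add the two $w_{\varepsilon}$-inequalities before applying Lemma \ref{c}, whereas the paper applies it to each separately; this changes nothing of substance.
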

\begin{proof}
Applying the same method as in Lemma \ref{lemmaL1} to system \eqref{weak model}, for a positive constant $C_{1}$ and  $L=\frac{1}{2\mu_{2}}r^{2}>0$ we infer that
\begin{equation*}
y(t):=\frac{2L}{\mu_{1}}\int_{\Omega}u_{\varepsilon}(\cdot,t)+\int_{\Omega}v_{\varepsilon}(\cdot,t)+\frac{4L+2\mu_{1}}{\mu_{1}}\int_{\Omega}w_{\varepsilon}(\cdot,t)
\end{equation*}
fulfills
\begin{equation}\label{69}
y'(t)+\frac{1}{2}y(t)+\frac{L}{2}\int_{\Omega}u_{\varepsilon}^{2}+\frac{\mu_{2}}{4}\int_{\Omega}v_{\varepsilon}^{2}\leq C_{1}\ \ \ \mbox{for all}\ t \in(0,T_{max,\varepsilon})\ \mbox{and}\ \varepsilon\in(0,1),
\end{equation}
which in light of an ODE comparison argument implies that
\begin{equation}\label{70}
\int_{\Omega}u_{\varepsilon}+\int_{\Omega}v_{\varepsilon}\leq C_{2}\ \ \ \mbox{for all}\ t \in(0,T_{max,\varepsilon})\ \mbox{and}\ \varepsilon\in(0,1)
\end{equation}
with some constant $C_{2}>0$. An integration of \eqref{69} in time also shows
\begin{equation}\label{71}
\int_{t}^{t+\tau}\int_{\Omega}\left(u_{\varepsilon}^{2}+v_{\varepsilon}^{2}\right)\leq C_{3}\ \ \ \mbox{for all}\ t \in(0,T_{max,\varepsilon}-\tau)\ \mbox{and}\ \varepsilon\in(0,1)
\end{equation}
and
\begin{equation}\label{71!}
\int_{0}^{T}\int_{\Omega}\left(u_{\varepsilon}^{2}+v_{\varepsilon}^{2}\right)\leq C_{4}(T)\ \ \ \mbox{for all}\ T \in(0,T_{max,\varepsilon}),
\end{equation}
where $C_{i}$ $(i=3,4)$ are positive constants and $\tau$ is given in \eqref{w3}. Multiplying the third equation in \eqref{weak model} by $-\Delta w_{\varepsilon}$, we use Young's inequality to obtain
\begin{equation}\label{72}
\begin{aligned}
\frac{1}{2}\frac{d}{dt}\int_{\Omega}|\nabla w_{\varepsilon}|^{2}+\int_{\Omega}|\nabla w_{\varepsilon}|^{2}+\int_{\Omega}|\Delta w_{\varepsilon}|^{2}=& -\int_{\Omega}u_{\varepsilon}\Delta w_{\varepsilon}-\int_{\Omega}v_{\varepsilon}\Delta w_{\varepsilon}\\
\leq&\frac{1}{2}\int_{\Omega}|\Delta w_{\varepsilon}|^{2}+\int_{\Omega}u_{\varepsilon}^{2}+\int_{\Omega}v_{\varepsilon}^{2}.
\end{aligned}
\end{equation}
Thanks to \eqref{71}, an application of Lemma \ref{c} to \eqref{72} yields that
\begin{equation}\label{72!}
\int_{\Omega}|\nabla w_{\varepsilon}|^{2}\leq C_{5}\ \ \ \mbox{for all}\ t \in(0,T_{max,\varepsilon})\ \mbox{and}\ \varepsilon\in(0,1)
\end{equation}
with a constant $C_{5}>0$. Then for some positive constants $C_{6}$ and $C_{7}$ we integrate \eqref{72} in time and apply \eqref{71}-\eqref{71!} to conclude that
\begin{equation}\label{71!!}
\int_{t}^{t+\tau}\int_{\Omega}\left(|\nabla w_{\varepsilon}|^{2}+|\Delta w_{\varepsilon}|^{2}\right)\leq C_{6}\ \ \ \mbox{for all}\ t \in(0,T_{max,\varepsilon}-\tau)\ \mbox{and}\ \varepsilon\in(0,1)
\end{equation}
and
\begin{equation}\label{71!!!}
\int_{0}^{T}\int_{\Omega}\left(|\nabla w_{\varepsilon}|^{2}+|\Delta w_{\varepsilon}|^{2}\right)\leq C_{7}(T)\ \ \ \mbox{for all}\ T \in(0,T_{max,\varepsilon})\ \mbox{and}\ \varepsilon\in(0,1).
\end{equation}
Testing the third equation against $w_{\varepsilon}$, we integrate by parts and use Young's inequality to estimate
\begin{equation}\label{aa}
\begin{aligned}
\frac{1}{2}\frac{d}{dt}\int_{\Omega}w_{\varepsilon}^{2}+\int_{\Omega}|\nabla w_{\varepsilon}|^{2}\leq& \int_{\Omega}u_{\varepsilon}w_{\varepsilon}+\int_{\Omega}v_{\varepsilon}w_{\varepsilon}-\int_{\Omega}w_{\varepsilon}^{2}\\
\leq&\int_{\Omega}u_{\varepsilon}^{2}+\int_{\Omega}v_{\varepsilon}^{2}-\frac{1}{2}\int_{\Omega}w_{\varepsilon}^{2}\ \ \ \mbox{for all}\ t \in(0,T_{max,\varepsilon})\ \mbox{and}\ \varepsilon\in(0,1).
\end{aligned}
\end{equation}
Upon another application of Lemma \ref{c}, \eqref{aa} entails that
\begin{equation}\label{74}
\int_{\Omega} w_{\varepsilon}^{2}\leq C_{8}\ \ \ \mbox{for all}\ t \in(0,T_{max,\varepsilon})\ \mbox{and}\ \varepsilon\in(0,1)
\end{equation}
with a constant $C_{8}>0$, which together with \eqref{70} and \eqref{72!} establishes \eqref{w1}. Finally, \eqref{w2} follows from \eqref{71!} and \eqref{71!!!}, while \eqref{w3} is a consequence of \eqref{71} and \eqref{71!!}.
\end{proof}

We are now in a position to establish the $\varepsilon$-dependent boundedness of $\|u_{\varepsilon}(\cdot,t)\|_{L^{\infty}(\Omega)}$, $\|v_{\varepsilon}(\cdot,t)\|_{L^{\infty}(\Omega)}$ and $\|w_{\varepsilon}(\cdot,t)\|_{W^{1,\infty}(\Omega)}$, which contributes to the well-posedness of system \eqref{weak model}.
\begin{lemma}\label{W3}
Let $\Omega\subset\mathbb R^{N}$ $(N\geq3)$ be a bounded domain with smooth boundary and $\mu_{i}>0$ $(i=1,2)$. For any choice of $\varepsilon\in(0,1)$, the solution to \eqref{weak model} is global in time.
\end{lemma}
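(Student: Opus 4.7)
The plan is to exclude blow-up via the extendibility criterion of Lemma~\ref{Local2} by establishing an $\varepsilon$-dependent but $t$-uniform $L^\infty$-bound for $u_\varepsilon$ and $v_\varepsilon$ on any compact subinterval of $(0,T_{max,\varepsilon})$. The key new feature available in~\eqref{weak model} is that $sF_\varepsilon(s)=s(1+\varepsilon s)^{-(N+1)}$ is uniformly bounded in $s\ge 0$ (by $C/\varepsilon$), so that the effective drift coefficient in the chemotactic term is tame; combined with the $\varepsilon$-independent estimates already furnished by Lemma~\ref{W1}, this leaves enough room for a standard parabolic bootstrap.

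First I would secure an initial $L^p$-estimate of $u_\varepsilon$ and $v_\varepsilon$ for some exponent $p\in(N,N+1)$. Testing the first equation of~\eqref{weak model} against $u_\varepsilon^{p-1}$, the chemotactic contribution can be rewritten as
$$\chi_1(p-1)\int_\Omega u_\varepsilon^{p-1}F_\varepsilon(u_\varepsilon)\nabla u_\varepsilon\cdot\nabla w_\varepsilon=\chi_1(p-1)\int_\Omega \nabla H_{p,\varepsilon}(u_\varepsilon)\cdot\nabla w_\varepsilon,$$
where $H_{p,\varepsilon}(u):=\int_0^u s^{p-1}F_\varepsilon(s)\,ds$ is uniformly bounded on $[0,\infty)$ because $p<N+1$ makes $s^{p-1}(1+\varepsilon s)^{-(N+1)}$ integrable at infinity. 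Integration by parts then converts this expression into $-\chi_1(p-1)\int H_{p,\varepsilon}(u_\varepsilon)\Delta w_\varepsilon$, which is bounded by $C(\varepsilon,p)\|\Delta w_\varepsilon\|_{L^2(\Omega)}$. Combining this with the spacetime control $\int_0^T\!\!\int|\Delta w_\varepsilon|^2\le C(T)$ from~\eqref{w2}, the $L^2$-bound on $w_\varepsilon$, Young's inequality on $\int u_\varepsilon^{p-1}w_\varepsilon$, and the logistic absorption $-\mu_1\int u_\varepsilon^{p+1}$, a Gronwall argument produces $\|u_\varepsilon(\cdot,t)\|_{L^p(\Omega)}\le C(\varepsilon,p,T)$ on $(0,T)$. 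The analogous testing by $v_\varepsilon^{p-1}$ is handled in the same way, with the extra term $r\int u_\varepsilon v_\varepsilon^{p-1}$ split via Young's inequality into the just-established $L^p$-bound of $u_\varepsilon$ and a multiple of $\int v_\varepsilon^{p}$ to be absorbed.

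Since $p>N$, Lemma~\ref{d} applied to the third equation with source $u_\varepsilon+v_\varepsilon-w_\varepsilon\in L^\infty((0,T);L^p(\Omega))$ now yields $\|w_\varepsilon(\cdot,t)\|_{W^{1,\infty}(\Omega)}\le C(\varepsilon,T)$. With $\nabla w_\varepsilon$ in $L^\infty$ and $F_\varepsilon\le1$, testing the first equation against $u_\varepsilon^{q-1}$ for arbitrarily large $q$ gives
$$\chi_1(q-1)\int u_\varepsilon^{q-1}F_\varepsilon(u_\varepsilon)\nabla u_\varepsilon\cdot\nabla w_\varepsilon\le\tfrac{q-1}{2}\int u_\varepsilon^{q-2}|\nabla u_\varepsilon|^2+C(\varepsilon,T)q\int u_\varepsilon^{q},$$
which, together with the dissipation and the logistic term, permits a Moser-type iteration entirely parallel to the one in Lemma~\ref{lemmaintensify}, yielding $\|u_\varepsilon(\cdot,t)\|_{L^\infty(\Omega)}+\|v_\varepsilon(\cdot,t)\|_{L^\infty(\Omega)}\le C(\varepsilon,T)$ on $(0,T)$. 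This contradicts the blow-up alternative in Lemma~\ref{Local2} for every $T<T_{max,\varepsilon}$, and therefore forces $T_{max,\varepsilon}=\infty$.

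The main obstacle is the initial $L^p$-estimate with $p>N$: once this is available, the subsequent steps (passing to $W^{1,\infty}$-regularity of $w_\varepsilon$ and Moser iteration) are standard. The antiderivative device $H_{p,\varepsilon}$ is the crucial technical ingredient, since it is precisely the strong decay built into $F_\varepsilon$ which guarantees the boundedness of $H_{p,\varepsilon}$ and thereby permits the $\Delta w_\varepsilon$-based control inherited from the $\varepsilon$-uniform estimates of Lemma~\ref{W1}.
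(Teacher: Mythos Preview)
Your strategy is essentially that of the paper: rewrite the chemotactic integral via the antiderivative $H_{p,\varepsilon}$, control it through the spacetime bound on $\Delta w_\varepsilon$ from Lemma~\ref{W1}, reach an $L^p$-estimate with $p>N$, invoke Lemma~\ref{d} for $\|w_\varepsilon\|_{W^{1,\infty}}$, and finish by Moser iteration. Your twist---choosing $p\in(N,N+1)$ so that $H_{p,\varepsilon}$ is \emph{uniformly bounded}---is a nice simplification: the paper instead takes $p=2N>N+1$, bounds $H_{p,\varepsilon}(u)\le C(\varepsilon)u^{p-N-1}$, and then needs a Young split producing $\int|\Delta w_\varepsilon|^{\frac{p+1}{N+2}}$, which still lies below the available $L^2$ threshold.

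There is, however, a gap in your handling of the zero-order source terms. The term $\int_\Omega u_\varepsilon^{p-1}w_\varepsilon$ cannot be controlled by the $L^2$-bound (or even the $H^1$-bound) on $w_\varepsilon$ alone once $N\ge 6$: after Young you are left with $\int_\Omega w_\varepsilon^{(p+1)/2}$, and for $p>N$ this exponent exceeds the Sobolev exponent $2^*=\tfrac{2N}{N-2}$. The paper closes this by \emph{also} testing the $w$-equation against $w_\varepsilon^{p-1}$ and coupling all three estimates, so that $\int_\Omega w_\varepsilon^{p}$ is absorbed by its own dissipation; you should do the same. A second, smaller slip: testing the $v$-equation by $v_\varepsilon^{p-1}$ produces $r\int_\Omega u_\varepsilon v_\varepsilon^{p}$ (not $v_\varepsilon^{p-1}$), and Young's inequality here yields $\int_\Omega u_\varepsilon^{p+1}$, not $\int_\Omega u_\varepsilon^{p}$. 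This is harmless---the spacetime $L^{p+1}$-bound on $u_\varepsilon$ comes for free from the logistic absorption in the $u$-estimate once you integrate in time---but your sentence as written does not match the actual exponents.
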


\begin{proof}
In this proof, we let $\varepsilon\in(0,1)$ and use $C_{i}$ $(i\in\mathbb{N}^{+})$ to represent some positive constants that may vary at each step and possibly depend on $\varepsilon$. Letting $T_{max,\varepsilon}<+\infty$, it directly follows from \eqref{w3} that
\begin{equation}\label{85}
\int_{0}^{T_{max,\varepsilon}}\int_{\Omega}|\Delta w_{\varepsilon}|^{2}\leq C_{1}.
\end{equation}
Multiplying the first equation in \eqref{weak model} by $u_{\varepsilon}^{p-1}$ $(p>N+1)$ yields that
\begin{equation}\label{86}
\begin{aligned}
&\frac{1}{p}\frac{d}{dt}\|u_{\varepsilon}\|_{L^{p}(\Omega)}^{p}+(p-1)\int_{\Omega}u_{\varepsilon}^{p-2}|\nabla u_{\varepsilon}|^{2}\\
=& (p-1)\chi_{1}\int_{\Omega}u_{\varepsilon}^{p-1}F_{\varepsilon}(u_{\varepsilon})\nabla u_{\varepsilon}\cdot\nabla w_{\varepsilon}+\int_{\Omega}u_{\varepsilon}^{p-1}w_{\varepsilon}-\mu_{1}\int_{\Omega}u_{\varepsilon}^{p+1}\ \ \ \mbox{for all}\ t \in(0,T_{max,\varepsilon}).
\end{aligned}
\end{equation}
By Young's inequality, for all $t\in(0,T_{max,\varepsilon})$ we estimate that
\begin{equation*}
\begin{aligned}
\int_{\Omega}u_{\varepsilon}^{p-1}w_{\varepsilon}\leq&\frac{\mu_{1}}{2}\int_{\Omega}u_{\varepsilon}^{p+1}+C_{2}\int_{\Omega}w_{\varepsilon}^{\frac{p+1}{2}}\\ \leq&\frac{\mu_{1}}{2}\int_{\Omega}u_{\varepsilon}^{p+1}+\frac{1}{8}\int_{\Omega}w_{\varepsilon}^{p}+C_{3}
\end{aligned}
\end{equation*}
and
\begin{equation*}
\begin{aligned}
(p-1)\chi_{1}\int_{\Omega}u_{\varepsilon}^{p-1}F_{\varepsilon}(u_{\varepsilon})\nabla u_{\varepsilon}\cdot\nabla w_{\varepsilon}=&(p-1)\chi_{1}\int_{\Omega}\nabla \left(\int_{0}^{u_{\varepsilon}}\frac{s^{p-1}}{(1+\varepsilon s)^{N+1}}ds\right)\cdot\nabla w_{\varepsilon}\\
\leq& (p-1)\chi_{1}\int_{\Omega}\int_{0}^{u_{\varepsilon}}\frac{s^{p-1}}{(1+\varepsilon s)^{N+1}}ds|\Delta w_{\varepsilon}|\\
\leq& (p-1)\frac{\chi_{1}}{\varepsilon^{N+1}}\int_{\Omega}\int_{0}^{u_{\varepsilon}}s^{p-N-2}ds|\Delta w_{\varepsilon}|\\
\leq& \frac{\chi_{1}(P-1)}{\varepsilon^{N+1}(P-N-1)}\int_{\Omega}u_{\varepsilon}^{p-N-1}|\Delta w_{\varepsilon}|\\
\leq&\frac{\mu_{1}}{4}\int_{\Omega}u_{\varepsilon}^{p+1}+C_{4}\int_{\Omega}|\Delta w_{\varepsilon}|^{\frac{p+1}{N+2}},
\end{aligned}
\end{equation*}
which update \eqref{86} as
\begin{equation}\label{89}
\begin{aligned}
\frac{1}{p}\frac{d}{dt}\|u_{\varepsilon}\|_{L^{p}(\Omega)}^{p}\leq\frac{1}{8}\int_{\Omega}w_{\varepsilon}^{p}+C_{4}\int_{\Omega}|\Delta w_{\varepsilon}|^{\frac{p+1}{N+2}}-\frac{\mu_{1}}{4}\int_{\Omega}u_{\varepsilon}^{p+1}+C_{3}\ \ \ \mbox{for all}\ t \in(0,T_{max,\varepsilon}).
\end{aligned}
\end{equation}

Similarly, multiplying the second equation in \eqref{weak model} by $v_{\varepsilon}^{p-1}$, by means of Young's inequality we obtain
\begin{equation}\label{90}
\begin{aligned}
&\frac{1}{p}\frac{d}{dt}\|v_{\varepsilon}\|_{L^{p}(\Omega)}^{p}+(p-1)\int_{\Omega}v_{\varepsilon}^{p-2}|\nabla v_{\varepsilon}|^{2}\\
=& (p-1)\chi_{2}\int_{\Omega}v_{\varepsilon}^{p-1}F_{\varepsilon}(v_{\varepsilon})\nabla v_{\varepsilon}\cdot\nabla w_{\varepsilon}+\int_{\Omega}v_{\varepsilon}^{p-1}w_{\varepsilon}+r\int_{\Omega}u_{\varepsilon}v_{\varepsilon}^{p}-\mu_{2}\int_{\Omega}v_{\varepsilon}^{p+1}\\
\leq&\frac{1}{8}\int_{\Omega}w_{\varepsilon}^{p}+C_{5}\int_{\Omega}|\Delta w_{\varepsilon}|^{\frac{p+1}{N+2}}+r\int_{\Omega}u_{\varepsilon}v_{\varepsilon}^{p}-\frac{\mu_{2}}{4}\int_{\Omega}v_{\varepsilon}^{p+1}+C_{6}\\
\leq&\frac{1}{8}\int_{\Omega}w_{\varepsilon}^{p}+C_{5}\int_{\Omega}|\Delta w_{\varepsilon}|^{\frac{p+1}{N+2}}+C_{7}\int_{\Omega}u_{\varepsilon}^{p+1}-\frac{\mu_{2}}{8}\int_{\Omega}v_{\varepsilon}^{p+1}+C_{6}\ \ \ \mbox{for all}\ t \in(0,T_{max,\varepsilon})
\end{aligned}
\end{equation}
where some positive constant $C_{7}=\frac{r^{p+1}}{p+1}\left(\frac{\mu_{2}(p+1)}{8p}\right)^{-p}$. Then testing the third equation in \eqref{weak model} against $w_{\varepsilon}^{p-1}$, for any $\varepsilon_{i}>0$ ($i=1,2$) we also apply Young's inequality to see that
\begin{equation}\label{91}
\begin{aligned}
\frac{1}{p}\frac{d}{dt}\|w_{\varepsilon}\|_{L^{p}(\Omega)}^{p}\leq&\int_{\Omega}u_{\varepsilon}w_{\varepsilon}^{p-1}+\int_{\Omega}v_{\varepsilon}w_{\varepsilon}^{p-1}-\int_{\Omega}w_{\varepsilon}^{p}\\
\leq&C_{8}\int_{\Omega}u_{\varepsilon}^{p}+C_{9}\int_{\Omega}v_{\varepsilon}^{p}-\frac{1}{4}\int_{\Omega}w_{\varepsilon}^{p}\\
\leq&\varepsilon_{1}\int_{\Omega}u_{\varepsilon}^{p+1}+\varepsilon_{2}\int_{\Omega}v_{\varepsilon}^{p+1}-\frac{1}{4}\int_{\Omega}w_{\varepsilon}^{p}+C_{10}\ \ \ \mbox{for all}\ t \in(0,T_{max,\varepsilon}).
\end{aligned}
\end{equation}
Define $L_{1}=\frac{4C_{7}}{\mu_{1}}+1$ and $L_{2}=\frac{2C_{7}}{\mu_{1}}+2$. The combination of \eqref{89}, \eqref{90} and \eqref{91} gives
\begin{equation}\label{92}
\begin{aligned}
&\frac{d}{dt}\left(\frac{L_{1}}{p}\|u_{\varepsilon}\|_{L^{p}(\Omega)}^{p}+\frac{1}{p}\|v_{\varepsilon}\|_{L^{p}(\Omega)}^{p}+\frac{L_{2}}{p}\|w_{\varepsilon}\|_{L^{p}(\Omega)}^{p}\right)\\
\leq&C_{11}\int_{\Omega}|\Delta w_{\varepsilon}|^{\frac{p+1}{N+2}}+\frac{1+L_{1}-2L_{2}}{8}\int_{\Omega}w_{\varepsilon}^{p}+\left(\varepsilon_{1}L_{2}+C_{7}-\frac{\mu_{1}L_{1}}{4}\right)\int_{\Omega}u_{\varepsilon}^{p+1}\\
&+\left(\varepsilon_{2}L_{2}-\frac{\mu_{2}}{8}\right)\int_{\Omega}v_{\varepsilon}^{p+1}+C_{12}\ \ \ \mbox{for all}\ t \in(0,T_{max,\varepsilon}).
\end{aligned}
\end{equation}
Let $\varepsilon_{1}=\frac{\mu_{1}}{8L_{2}}$ and $\varepsilon_{2}=\frac{\mu_{2}}{16L_{2}}$.
Based on the choice of $L_{i}$ and $\varepsilon_{i}$ $(i=1,2)$, \eqref{92} implies that
\begin{equation*}
\frac{d}{dt}\left(\frac{L_{1}}{p}\|u_{\varepsilon}\|_{L^{p}(\Omega)}^{p}+\frac{1}{p}\|v_{\varepsilon}\|_{L^{p}(\Omega)}^{p}+\frac{L_{2}}{p}\|w_{\varepsilon}\|_{L^{p}(\Omega)}^{p}\right)\leq C_{11}\int_{\Omega}|\Delta w_{\varepsilon}|^{\frac{p+1}{N+2}}+C_{12}
\end{equation*}
for all $t\in(0,T_{max,\varepsilon})$, which upon an integration yields
\begin{equation}\label{95}
\|u_{\varepsilon}(\cdot,t)\|_{L^{2N}(\Omega)}+\|v_{\varepsilon}(\cdot,t)\|_{L^{2N}(\Omega)}+\|w_{\varepsilon}(\cdot,t)\|_{L^{2N}(\Omega)}\leq C_{13}\ \ \ \mbox{for all}\ t \in(0,T_{max,\varepsilon})
\end{equation}
thanks to \eqref{85} and the choice of $p=2N$. Then we employ Lemma \ref{d} to conclude that
\begin{equation}\label{101}
\|w_{\varepsilon}(\cdot,t)\|_{W^{1,\infty}(\Omega)}\leq C_{14}\ \ \ \mbox{for all}\ t \in(0,T_{max,\varepsilon}).
\end{equation}
Finally, by establishing a Moser-type iteration identical to that in Lemma \ref{lemmaintensify}, one has
\begin{equation}\label{102}
\|u_{\varepsilon}(\cdot,t)\|_{L^{\infty}(\Omega)}+\|v_{\varepsilon}(\cdot,t)\|_{L^{\infty}(\Omega)}\leq C_{15}\ \ \ \mbox{for all}\ t \in(0,T_{max,\varepsilon}).
\end{equation}
Thus Lemma \ref{W3} is a consequence of \eqref{101}, \eqref{102} and the extendibility criterion provided by Lemma \ref{Local2}.
\end{proof}

The following spatio-temporal regularity plays a key role in our further analysis.
\begin{lemma}\label{W2}
Suppose that $\mu_{i}>0$ $(i=1,2)$. Then for any $T>0$, there exists a constant $C(T)>0$ fulfilling
\begin{equation}\label{w21}
\int_{0}^{T}\int_{\Omega}\left(\frac{|\nabla u_{\varepsilon}|^{2}}{u_{\varepsilon}}+\frac{|\nabla v_{\varepsilon}|^{2}}{v_{\varepsilon}}\right)\leq C(T).
\end{equation}
\end{lemma}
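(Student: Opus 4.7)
The strategy is to derive entropy-type identities via the logarithmic test functions $\ln u_{\varepsilon}$ and $\ln v_{\varepsilon}$. Since $u_{0}\not\equiv 0$ and the source $w_{\varepsilon}$ is nonnegative, the strong parabolic maximum principle gives $u_{\varepsilon}(\cdot,t)>0$ for every $t>0$, so $\ln u_{\varepsilon}$ is admissible; the same is true for $v_{\varepsilon}$ once any positive time has elapsed. A standard regularization using $\ln(u_{\varepsilon}+\eta)$ with $\eta\downarrow 0$ (or working on $[\tau,T]$ and letting $\tau\to 0^{+}$) justifies all calculations, and the resulting bounds are independent of the regularization.

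I would first test the $u_{\varepsilon}$-equation against $\ln u_{\varepsilon}$ and integrate by parts to obtain
\begin{equation*}
\frac{d}{dt}\int_{\Omega}\bigl(u_{\varepsilon}\ln u_{\varepsilon}-u_{\varepsilon}+1\bigr)+\int_{\Omega}\frac{|\nabla u_{\varepsilon}|^{2}}{u_{\varepsilon}}=\chi_{1}\int_{\Omega}F_{\varepsilon}(u_{\varepsilon})\nabla u_{\varepsilon}\cdot\nabla w_{\varepsilon}+\int_{\Omega}w_{\varepsilon}\ln u_{\varepsilon}-\mu_{1}\int_{\Omega}u_{\varepsilon}^{2}\ln u_{\varepsilon}.
\end{equation*}
Introducing $\Phi_{\varepsilon}(s):=\int_{0}^{s}F_{\varepsilon}(r)\,dr$, which satisfies $0\leq\Phi_{\varepsilon}(s)\leq s$, the chemotactic integral rewrites as $-\chi_{1}\int\Phi_{\varepsilon}(u_{\varepsilon})\Delta w_{\varepsilon}$ and is controlled by $\frac{\chi_{1}^{2}}{2}\int u_{\varepsilon}^{2}+\frac{1}{2}\int|\Delta w_{\varepsilon}|^{2}$, which lies in $L^{1}((0,T))$ by Lemma \ref{W1}. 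For the source $\int w_{\varepsilon}\ln u_{\varepsilon}$, I would split: on $\{u_{\varepsilon}\geq 1\}$ use $\ln u_{\varepsilon}\leq u_{\varepsilon}$ to get $\int w_{\varepsilon}\ln u_{\varepsilon}\leq \tfrac{1}{2}\int(w_{\varepsilon}^{2}+u_{\varepsilon}^{2})$; on $\{u_{\varepsilon}<1\}$ the contribution is nonpositive and hence discarded. The logistic term is treated via the pointwise estimates $-s^{2}\ln s\leq 1/(2e)$ on $(0,1]$ and $-s^{2}\ln s\leq 0$ on $[1,\infty)$, yielding $-\mu_{1}\int u_{\varepsilon}^{2}\ln u_{\varepsilon}\leq \mu_{1}|\Omega|/(2e)$. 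Integrating over $(0,T)$ and exploiting $s\ln s-s+1\geq 0$ delivers the claimed bound on $\int_{0}^{T}\int|\nabla u_{\varepsilon}|^{2}/u_{\varepsilon}$.

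For $v_{\varepsilon}$ the same template applies, but the extra source $r\int u_{\varepsilon}v_{\varepsilon}\ln v_{\varepsilon}$ appears on the right. I would pair it with the logistic dissipation via
\begin{equation*}
r\int_{\Omega} u_{\varepsilon}v_{\varepsilon}\ln v_{\varepsilon}-\mu_{2}\int_{\Omega} v_{\varepsilon}^{2}\ln v_{\varepsilon}=\int_{\Omega} \ln v_{\varepsilon}\cdot v_{\varepsilon}(ru_{\varepsilon}-\mu_{2}v_{\varepsilon}),
\end{equation*}
apply Young's inequality $ru_{\varepsilon}v_{\varepsilon}\leq\frac{\mu_{2}}{2}v_{\varepsilon}^{2}+\frac{r^{2}}{2\mu_{2}}u_{\varepsilon}^{2}$, and split on $\{v_{\varepsilon}<1\}$ (where $\ln v_{\varepsilon}<0$ reverses the inequality so only a bad piece proportional to $|v_{\varepsilon}^{2}\ln v_{\varepsilon}|\leq 1/(2e)$ survives) and $\{v_{\varepsilon}\geq 1\}$ (where the negative contribution $-\tfrac{\mu_{2}}{2}v_{\varepsilon}^{2}\ln v_{\varepsilon}\leq 0$ is dropped and the remaining $\tfrac{r^{2}}{2\mu_{2}}u_{\varepsilon}^{2}\ln v_{\varepsilon}$ is absorbed either by the extra dissipation $-\tfrac{\mu_{2}}{2}v_{\varepsilon}^{2}\ln v_{\varepsilon}$ or by quadratic quantities already in $L^{1}((0,T))$ through Lemma \ref{W1}).

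The main technical obstacle is exactly the disposal of $r\int u_{\varepsilon}v_{\varepsilon}\ln v_{\varepsilon}$: Lemma \ref{W1} only supplies uniform $L^{2}((0,T)\times\Omega)$ control of $u_{\varepsilon}$ and $v_{\varepsilon}$, so a crude Young bound on $u_{\varepsilon}v_{\varepsilon}^{2}$ does not close. The delicate step is the careful balance against the logistic dissipation $-\mu_{2}\int v_{\varepsilon}^{2}\ln v_{\varepsilon}$, exploiting simultaneously the sign of $\ln v_{\varepsilon}$, the pointwise bound $|s^{2}\ln s|\leq 1/(2e)$ on $(0,1]$, and the inequality $\ln v_{\varepsilon}\leq v_{\varepsilon}$ on $\{v_{\varepsilon}\geq 1\}$, so that all surviving contributions reduce to space-time integrals of $u_{\varepsilon}^{2}$, $v_{\varepsilon}^{2}$, $w_{\varepsilon}^{2}$, and $|\Delta w_{\varepsilon}|^{2}$, which are the quantities already known to be in $L^{1}((0,T))$.
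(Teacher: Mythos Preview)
Your treatment of the $u_{\varepsilon}$-entropy is fine in isolation, but the argument for $v_{\varepsilon}$ does not close, and the reason is tied to a choice you made earlier: you discard the dissipation $-\mu_{1}\int_{\Omega}u_{\varepsilon}^{2}\ln u_{\varepsilon}$ by bounding it with a constant. That term is exactly what is needed to handle the cross term $r\int_{\Omega}u_{\varepsilon}v_{\varepsilon}\ln v_{\varepsilon}$.

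Concretely, on $\{v_{\varepsilon}\geq 1\}$ your Young step leaves $\tfrac{r^{2}}{2\mu_{2}}\int u_{\varepsilon}^{2}\ln v_{\varepsilon}$. Neither of your two proposed absorptions works in general: absorbing into $-\tfrac{\mu_{2}}{2}v_{\varepsilon}^{2}\ln v_{\varepsilon}$ requires $u_{\varepsilon}\leq \tfrac{\mu_{2}}{r}v_{\varepsilon}$ pointwise, which you do not have; and ``reducing to quadratic quantities'' fails because $\ln v_{\varepsilon}$ is unbounded. If you split further on $\{u_{\varepsilon}>\tfrac{\mu_{2}}{r}v_{\varepsilon}\}$, then $\ln v_{\varepsilon}\leq \ln(\tfrac{r}{\mu_{2}}u_{\varepsilon})$ and you are left with a contribution of order $\int u_{\varepsilon}^{2}\ln u_{\varepsilon}$, which is super-quadratic and \emph{not} controlled by Lemma~\ref{W1}.

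The paper resolves this by coupling the two entropy inequalities. One splits $r\int u_{\varepsilon}v_{\varepsilon}\ln v_{\varepsilon}$ over $\{ku_{\varepsilon}\geq v_{\varepsilon}\}$ and $\{ku_{\varepsilon}<v_{\varepsilon}\}$ with $k=\tfrac{r}{\mu_{2}}+1$, obtaining
\[
r\int_{\Omega}u_{\varepsilon}v_{\varepsilon}\ln v_{\varepsilon}\leq rk\int_{\Omega}u_{\varepsilon}^{2}\ln(ku_{\varepsilon})+\frac{r}{k}\int_{\Omega}v_{\varepsilon}^{2}\ln v_{\varepsilon}.
\]
Since $\tfrac{r}{k}<\mu_{2}$, the second piece is absorbed by $-\mu_{2}\int v_{\varepsilon}^{2}\ln v_{\varepsilon}$. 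The first piece produces $rk\int u_{\varepsilon}^{2}\ln u_{\varepsilon}$, and this is where the retained $-\mu_{1}\int u_{\varepsilon}^{2}\ln u_{\varepsilon}$ from the $u_{\varepsilon}$-inequality comes in: one multiplies the $u_{\varepsilon}$-entropy inequality by a sufficiently large constant $L$ and adds it to the $v_{\varepsilon}$-inequality so that $rk-L\mu_{1}<0$. Only after this coupling do all remaining right-hand terms reduce to $\int u_{\varepsilon}^{2}$, $\int v_{\varepsilon}^{2}$, $\int|\Delta w_{\varepsilon}|^{2}$ and constants. So the fix is: do not throw away $-\mu_{1}\int u_{\varepsilon}^{2}\ln u_{\varepsilon}$, and treat the two entropy estimates as a system rather than sequentially.
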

\begin{proof}
Testing the first equation in \eqref{weak model} by $\ln u_{\varepsilon}+1$, we integrate by parts to derive
\begin{equation}\label{75}
\begin{aligned}
\frac{d}{dt}\int_{\Omega}u_{\varepsilon}\ln u_{\varepsilon}=&\int_{\Omega}u_{\varepsilon t}\ln u_{\varepsilon}+\int_{\Omega}u_{\varepsilon t}\\
=&-\int_{\Omega}\frac{|\nabla u_{\varepsilon}|^{2}}{u_{\varepsilon}}+\chi_{1}\int_{\Omega}F_{\varepsilon}(u_{\varepsilon})\nabla u_{\varepsilon}\cdot\nabla w_{\varepsilon}+\int_{\Omega}w_{\varepsilon}\ln u_{\varepsilon}+\int_{\Omega}w_{\varepsilon}\\
&-\mu_{1}\int_{\Omega}u_{\varepsilon}^{2}\ln u_{\varepsilon}-\mu_{1}\int_{\Omega}u_{\varepsilon}^{2}\ \ \ \mbox{for all}\ t \in(0,T_{max,\varepsilon})\ \mbox{and}\ \varepsilon\in(0,1),
\end{aligned}
\end{equation}
where due to Young's inequality and \eqref{74} one has
\begin{equation}\label{76}
\begin{aligned}
\int_{\Omega}w_{\varepsilon}\ln u_{\varepsilon}\leq&\varepsilon_{1}\int_{\Omega}(\ln u_{\varepsilon})^{2}+C_{1}\int_{\Omega}w_{\varepsilon}^{2}\\
\leq& \varepsilon_{1}\int_{\Omega}u_{\varepsilon}^{2}+C_{2}
\end{aligned}
\end{equation}
and
\begin{equation}\label{77}
\begin{aligned}
\chi_{1}\int_{\Omega}F_{\varepsilon}(u_{\varepsilon})\nabla u_{\varepsilon}\cdot\nabla w_{\varepsilon}=& \chi_{1}\int_{\Omega}\nabla \left(\int_{0}^{u_{\varepsilon}}\frac{1}{(1+\varepsilon s)^{N+1}}ds\right)\cdot\nabla w_{\varepsilon}\\
\leq& \chi_{1}\int_{\Omega}\left(\int_{0}^{u_{\varepsilon}}\frac{1}{(1+\varepsilon s)^{N+1}}ds\right)|\Delta w_{\varepsilon}|\\
\leq& \chi_{1}\int_{\Omega}u_{\varepsilon}|\Delta w_{\varepsilon}|\\
\leq& \varepsilon_{2}\int_{\Omega}u_{\varepsilon}^{2}+C_{3}\int_{\Omega}|\Delta w_{\varepsilon}|^{2}\\
\end{aligned}
\end{equation}
with some positive constants $C_{i}$ $(i=1, .., 3)$. Inserting \eqref{76}-\eqref{77} into \eqref{75}, for a positive constant $C_{4}=C_{2}+\|w(\cdot,t)\|_{L^{1}(\Omega)}$ we see that
\begin{equation}\label{78}
\begin{aligned}
\frac{d}{dt}\int_{\Omega}u_{\varepsilon}\ln u_{\varepsilon}+\int_{\Omega}\frac{|\nabla u_{\varepsilon}|^{2}}{u_{\varepsilon}}\leq C_{3}\int_{\Omega}|\Delta w_{\varepsilon}|^{2}-\mu_{1}\int_{\Omega}u_{\varepsilon}^{2}\ln u_{\varepsilon}+\left(\varepsilon_{1}+\varepsilon_{2}-\mu_{1}\right)\int_{\Omega}u_{\varepsilon}^{2}+C_{4}
\end{aligned}
\end{equation}
for all $t \in(0,T_{max,\varepsilon})$ and $\varepsilon\in(0,1)$. Similarly, testing the second equation by $\ln v_{\varepsilon}+1$ yields
\begin{equation}\label{79}
\begin{aligned}
&\frac{d}{dt}\int_{\Omega}v_{\varepsilon}\ln v_{\varepsilon}+\int_{\Omega}\frac{|\nabla v_{\varepsilon}|^{2}}{v_{\varepsilon}}\\
=&\chi_{2}\int_{\Omega}F_{\varepsilon}(v_{\varepsilon})\nabla v_{\varepsilon}\cdot\nabla w_{\varepsilon}+\int_{\Omega}w_{\varepsilon}\ln v_{\varepsilon}+\int_{\Omega}w_{\varepsilon}+r\int_{\Omega}u_{\varepsilon}v_{\varepsilon}\\
&+r\int_{\Omega}u_{\varepsilon}v_{\varepsilon}\ln v_{\varepsilon}-\mu_{2}\int_{\Omega}v_{\varepsilon}^{2}\ln v_{\varepsilon}-\mu_{2}\int_{\Omega}v_{\varepsilon}^{2}\\
\leq& r\int_{\Omega}u_{\varepsilon}v_{\varepsilon}+r\int_{\Omega}u_{\varepsilon}v_{\varepsilon}\ln v_{\varepsilon}+C_{5}\int_{\Omega}|\Delta w_{\varepsilon}|^{2}-\mu_{2}\int_{\Omega}v_{\varepsilon}^{2}\ln v_{\varepsilon}+\left(\varepsilon_{3}+\varepsilon_{4}-\mu_{2}\right)\int_{\Omega}v_{\varepsilon}^{2}+C_{6}
\end{aligned}
\end{equation}
with some constants $C_{5}>0$ and $C_{6}>0$, where we have used Young's inequality. For the first two terms on the right side, we can pick $k=\frac{r}{\mu_{2}}+1$ and estimate that
\begin{equation*}
\begin{aligned}
r\int_{\Omega}u_{\varepsilon}v_{\varepsilon}=&r\int_{\{ku_{\varepsilon}\geq v_{\varepsilon}\}}u_{\varepsilon}v_{\varepsilon}+r\int_{\{ku_{\varepsilon}<v_{\varepsilon}\}}u_{\varepsilon}v_{\varepsilon}\\
\leq& rk\int_{\Omega}u_{\varepsilon}^{2}+\frac{r}{k}\int_{\Omega}v_{\varepsilon}^{2}\ \ \ \mbox{for all}\ t \in(0,T_{max,\varepsilon})
\end{aligned}
\end{equation*}
and
\begin{equation*}
\begin{aligned}
r\int_{\Omega}u_{\varepsilon}v_{\varepsilon}\ln v_{\varepsilon}\leq r\int_{\{v_{\varepsilon}>1\}}u_{\varepsilon}v_{\varepsilon}\ln v_{\varepsilon}=&r\int_{\{v_{\varepsilon}>1,ku_{\varepsilon}\geq v_{\varepsilon}\}}u_{\varepsilon}v_{\varepsilon}\ln v_{\varepsilon}+r\int_{\{v_{\varepsilon}>1,ku_{\varepsilon}<v_{\varepsilon}\}}u_{\varepsilon}v_{\varepsilon}\ln v_{\varepsilon}\\
\leq& rk\int_{\Omega}u_{\varepsilon}^{2}\ln ku_{\varepsilon}+\frac{r}{k}\int_{\Omega}v_{\varepsilon}^{2}\ln v_{\varepsilon}
\end{aligned}
\end{equation*}
for all $t\in(0,T_{max,\varepsilon})$ and $\varepsilon\in(0,1)$, which update \eqref{79} as
\begin{equation}\label{82}
\begin{aligned}
&\frac{d}{dt}\int_{\Omega}v_{\varepsilon}\ln v_{\varepsilon}+\int_{\Omega}\frac{|\nabla v_{\varepsilon}|^{2}}{v_{\varepsilon}}\\
\leq& C_{5}\int_{\Omega}|\Delta w_{\varepsilon}|^{2}+rk\int_{\Omega}u_{\varepsilon}^{2}+rk\ln k\int_{\Omega}u_{\varepsilon}^{2}+rk\int_{\Omega}u_{\varepsilon}^{2}\ln u_{\varepsilon}\\
&+\left(\frac{r}{k}-\mu_{2}\right)\int_{\Omega}v_{\varepsilon}^{2}\ln v_{\varepsilon}+\left(\varepsilon_{3}+\varepsilon_{4}+\frac{r}{k}-\mu_{2}\right)\int_{\Omega}v_{\varepsilon}^{2}+C_{6}\ \ \ \mbox{for all}\ t \in(0,T_{max,\varepsilon}).
\end{aligned}
\end{equation}
Let $L=\frac{rk(1+\ln k)}{\mu_{1}}+1$. Combining \eqref{78} with \eqref{82}, some basic computations reveal that
\begin{equation}\label{83}
\begin{aligned}
&\frac{d}{dt}\left(L\int_{\Omega}u_{\varepsilon}\ln u_{\varepsilon}+\int_{\Omega}v_{\varepsilon}\ln v_{\varepsilon}\right)+L\int_{\Omega}\frac{|\nabla u_{\varepsilon}|^{2}}{u_{\varepsilon}}+\int_{\Omega}\frac{|\nabla v_{\varepsilon}|^{2}}{v_{\varepsilon}}\\
\leq& C_{7}\int_{\Omega}|\Delta w_{\varepsilon}|^{2}+(rk-L\mu_{1})\int_{\Omega}u_{\varepsilon}^{2}\ln u_{\varepsilon}+\left(L\varepsilon_{1}+L\varepsilon_{2}+rk+rk\ln k-L\mu_{1}\right)\int_{\Omega}u_{\varepsilon}^{2}\\
&+\left(\frac{r}{k}-\mu_{2}\right)\int_{\Omega}v_{\varepsilon}^{2}\ln v_{\varepsilon}+\left(\varepsilon_{3}+\varepsilon_{4}+\frac{r}{k}-\mu_{2}\right)\int_{\Omega}v_{\varepsilon}^{2}+C_{8}
\end{aligned}
\end{equation}
for all $t\in(0,T_{max,\varepsilon})$ and $\varepsilon\in(0,1)$, where certain positive constants $C_{7}=LC_{3}+C_{5}$ and $C_{8}=LC_{4}+C_{6}$. Then we can fix suitably small $\varepsilon_{i}$ $(i=1, ..., 4)$ fulfilling
\begin{equation*}
0<\varepsilon_{1}+\varepsilon_{2}<\mu_{1}-\frac{rk(1+\ln k)}{L}
\end{equation*}
and
\begin{equation*}
0<\varepsilon_{3}+\varepsilon_{4}<\mu_{2}-\frac{r}{k}.
\end{equation*}
Therefore, for all $t\in(0,T_{max,\varepsilon})$ and $\varepsilon\in(0,1)$, \eqref{83} entails that
\begin{equation}\label{83!}
\frac{d}{dt}\left(L\int_{\Omega}u_{\varepsilon}\ln u_{\varepsilon}+\int_{\Omega}v_{\varepsilon}\ln v_{\varepsilon}\right)+L\int_{\Omega}\frac{|\nabla u_{\varepsilon}|^{2}}{u_{\varepsilon}}+\int_{\Omega}\frac{|\nabla v_{\varepsilon}|^{2}}{v_{\varepsilon}}
\leq C_{7}\int_{\Omega}|\Delta w_{\varepsilon}|^{2}+C_{8}.
\end{equation}
In light of \eqref{71!!!} and the fact $-s\ln s\leq\frac{1}{e}$ for all $s>0$, we readily infer \eqref{w21} by integrating \eqref{83!} over $(0,T)$.
\end{proof}

With the help of the above estimates, we can achieve higher regularity in spatio-temporal estimates of solutions and derive some regularity properties for time derivatives, which facilitate the application of the Aubin-Lions type compactness argument.
\begin{lemma}\label{W4}
For any $T>0$, there exists a constant $C(T)>0$ such that for all $\varepsilon\in(0,1)$,
\begin{equation}\label{w41}
\int_{0}^{T}\int_{\Omega}\left(|\nabla u_{\varepsilon}|^{\frac{4}{3}}+u_{\varepsilon}^{2}\right)+\int_{0}^{T}\int_{\Omega}\left(|\nabla v_{\varepsilon}|^{\frac{4}{3}}+v_{\varepsilon}^{2}\right)\leq C(T)
\end{equation}
and
\begin{equation}\label{w44}
\int_{0}^{T}\int_{\Omega}|u_{\varepsilon}F_{\varepsilon}(u_{\varepsilon})\nabla w_{\varepsilon}|+\int_{0}^{T}\int_{\Omega}|v_{\varepsilon}F_{\varepsilon}(v_{\varepsilon})\nabla w_{\varepsilon}|\leq C(T).
\end{equation}
Furthermore, for any $q>N$ one has
\begin{equation}\label{w42}
\int_{0}^{T}\|u_{\varepsilon t}(\cdot,t)\|_{(W^{2,q}(\Omega))^{\ast}}dt+\int_{0}^{T}\|v_{\varepsilon t}(\cdot,t)\|_{(W^{2,q}(\Omega))^{\ast}}dt\leq C(T)
\end{equation}
and
\begin{equation}\label{w43}
\int_{0}^{T}\|w_{\varepsilon t}(\cdot,t)\|^{2}_{(W^{1,2}(\Omega))^{\ast}}dt\leq C(T).
\end{equation}
\end{lemma}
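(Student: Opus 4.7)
The plan is to harvest the estimates already assembled in Lemmas \ref{W1} and \ref{W2}, combine them by simple interpolation to produce the spatial bounds \eqref{w41} and \eqref{w44}, and then dualise the equations of \eqref{weak model} against sufficiently regular test functions to derive the time-derivative controls \eqref{w42} and \eqref{w43}.

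For the $L^{4/3}$ gradient bound in \eqref{w41}, I would first apply Hölder's inequality pointwise in space, writing
\[
\int_\Omega |\nabla u_\varepsilon|^{4/3}=\int_\Omega\Big(\tfrac{|\nabla u_\varepsilon|^2}{u_\varepsilon}\Big)^{2/3}u_\varepsilon^{2/3}\leq\Big(\int_\Omega\tfrac{|\nabla u_\varepsilon|^2}{u_\varepsilon}\Big)^{2/3}\Big(\int_\Omega u_\varepsilon^2\Big)^{1/3},
\]
and then apply Hölder in time with exponents $\tfrac{3}{2}$ and $3$, so that the two resulting spacetime factors are controlled by \eqref{w21} and \eqref{w2}. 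The $L^2$ spacetime bound on $u_\varepsilon$ in \eqref{w41} is already contained in \eqref{w2}. For \eqref{w44}, the elementary inequality $0\leq F_\varepsilon(s)\leq 1$ together with the Cauchy--Schwarz inequality gives
\[
\int_0^T\!\!\int_\Omega u_\varepsilon F_\varepsilon(u_\varepsilon)|\nabla w_\varepsilon|\leq \Big(\int_0^T\!\!\int_\Omega u_\varepsilon^2\Big)^{1/2}\Big(\int_0^T\!\!\int_\Omega|\nabla w_\varepsilon|^2\Big)^{1/2},
\]
with both right-hand factors controlled by \eqref{w2}. The analogous steps apply verbatim to $v_\varepsilon$.

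For \eqref{w42}, given $q>N$ and $\varphi\in W^{2,q}(\Omega)$, I would test the first equation in \eqref{weak model} against $\varphi$, integrate by parts, and invoke the Sobolev embedding $W^{2,q}(\Omega)\hookrightarrow W^{1,\infty}(\Omega)$ to obtain
\[
\Big|\int_\Omega u_{\varepsilon t}\varphi\Big|\leq C\bigl(\|\nabla u_\varepsilon\|_{L^1(\Omega)}+\|u_\varepsilon F_\varepsilon(u_\varepsilon)\nabla w_\varepsilon\|_{L^1(\Omega)}+\|w_\varepsilon\|_{L^1(\Omega)}+\|u_\varepsilon\|_{L^2(\Omega)}^2\bigr)\|\varphi\|_{W^{2,q}(\Omega)}.
\]
Taking the supremum over $\varphi$ of unit norm, and using $\|\nabla u_\varepsilon\|_{L^1}\leq|\Omega|^{1/4}\|\nabla u_\varepsilon\|_{L^{4/3}}$ together with one further Hölder step in time, each summand lies in $L^1(0,T)$ thanks to \eqref{w1}, \eqref{w2}, \eqref{w41}, and \eqref{w44}. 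The corresponding estimate for $v_{\varepsilon t}$ differs only by the zeroth-order contribution $r\int_\Omega u_\varepsilon v_\varepsilon\varphi$, which is handled through $2|u_\varepsilon v_\varepsilon|\leq u_\varepsilon^2+v_\varepsilon^2$ and \eqref{w2}. For \eqref{w43}, a direct test of the third equation against $\varphi\in W^{1,2}(\Omega)$ yields
\[
\Big|\int_\Omega w_{\varepsilon t}\varphi\Big|\leq \bigl(\|\nabla w_\varepsilon\|_{L^2(\Omega)}+\|u_\varepsilon\|_{L^2(\Omega)}+\|v_\varepsilon\|_{L^2(\Omega)}+\|w_\varepsilon\|_{L^2(\Omega)}\bigr)\|\varphi\|_{W^{1,2}(\Omega)},
\]
after which squaring and integrating in time reduces the claim to \eqref{w1} and \eqref{w2}.

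The main obstacle is keeping every estimate $\varepsilon$-uniform: the chemotactic flux $\chi_i u_\varepsilon F_\varepsilon(u_\varepsilon)\nabla w_\varepsilon$ must be controlled without exploiting the $\varepsilon$-dependent smoothing built into $F_\varepsilon$, so the only admissible tool is the $\varepsilon$-independent pointwise bound $F_\varepsilon\leq 1$. This constraint, in turn, dictates the choice of the test space $W^{2,q}(\Omega)$ with $q>N$ in \eqref{w42}, since only then is $\|\nabla\varphi\|_{L^\infty}$ available to pair with the merely $L^1$-integrable chemotactic flux.
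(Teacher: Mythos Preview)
Your proposal is correct and follows essentially the same approach as the paper: the same H\"older splitting $|\nabla u_\varepsilon|^{4/3}=(|\nabla u_\varepsilon|^2/u_\varepsilon)^{2/3}u_\varepsilon^{2/3}$ feeding on \eqref{w2} and \eqref{w21}, the same Cauchy--Schwarz treatment of the flux via $F_\varepsilon\leq 1$, and the same dualisation of the equations against $W^{1,\infty}$-test functions (reached through $W^{2,q}\hookrightarrow W^{1,\infty}$ for $q>N$) for the time-derivative bounds. The only cosmetic difference is that the paper applies H\"older directly on the spacetime cylinder and absorbs the $L^1$ terms by Young's inequality rather than your two-step H\"older, but the substance is identical.
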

\begin{proof}
In view of \eqref{w2} and \eqref{w21}, for some positive constants $C_{1}(T)$ and $C_{2}(T)$ we use the H\"{o}lder inequality to derive
\begin{equation*}
\begin{aligned}
&\int_{0}^{T}\int_{\Omega}|\nabla u_{\varepsilon}|^{\frac{4}{3}}+\int_{0}^{T}\int_{\Omega}|\nabla v_{\varepsilon}|^{\frac{4}{3}}\\
=&\int_{0}^{T}\int_{\Omega}\frac{|\nabla u_{\varepsilon}|^{\frac{4}{3}}}{u_{\varepsilon}^{\frac{2}{3}}}u_{\varepsilon}^{\frac{2}{3}}+\int_{0}^{T}\int_{\Omega}\frac{|\nabla v_{\varepsilon}|^{\frac{4}{3}}}{v_{\varepsilon}^{\frac{2}{3}}}v_{\varepsilon}^{\frac{2}{3}}\\
\leq& \left(\int_{0}^{T}\int_{\Omega}\frac{|\nabla u_{\varepsilon}|^{2}}{u_{\varepsilon}}\right)^{\frac{2}{3}}\left(\int_{0}^{T}\int_{\Omega}u_{\varepsilon}^{2}\right)^{\frac{1}{3}}+\left(\int_{0}^{T}\int_{\Omega}\frac{|\nabla v_{\varepsilon}|^{2}}{v_{\varepsilon}}\right)^{\frac{2}{3}}\left(\int_{0}^{T}\int_{\Omega}v_{\varepsilon}^{2}\right)^{\frac{1}{3}}\\
\leq& C_{1}(T)\ \ \ \mbox{for all}\ T >0\ \mbox{and}\ \varepsilon\in(0,1)
\end{aligned}
\end{equation*}
and
\begin{equation*}
\begin{aligned}
&\int_{0}^{T}\int_{\Omega}|u_{\varepsilon}F_{\varepsilon}(u_{\varepsilon})\nabla w_{\varepsilon}|+\int_{0}^{T}\int_{\Omega}|v_{\varepsilon}F_{\varepsilon}(v_{\varepsilon})\nabla w_{\varepsilon}|\\
\leq& \left(\int_{0}^{T}\int_{\Omega}|\nabla w_{\varepsilon}|^{2}\right)^{\frac{1}{2}}\left(\int_{0}^{T}\int_{\Omega}u_{\varepsilon}^{2}\right)^{\frac{1}{2}}+\left(\int_{0}^{T}\int_{\Omega}|\nabla w_{\varepsilon}|^{2}\right)^{\frac{1}{2}}\left(\int_{0}^{T}\int_{\Omega}v_{\varepsilon}^{2}\right)^{\frac{1}{2}}\\
\leq& C_{2}(T)\ \ \ \mbox{for all}\ T >0\ \mbox{and}\ \varepsilon\in(0,1),
\end{aligned}
\end{equation*}
which together with \eqref{w2} yield \eqref{w41} and \eqref{w44}. Testing the first two equations in \eqref{weak model} by certain $\varphi\in C^{1}(\overline{\Omega})$ fulfilling $\|\varphi(\cdot,t)\|_{W^{1,\infty}}\leq1$, by means of \eqref{w1} and Young's inequality we have
\begin{equation}\label{105}
\begin{aligned}
\bigg|\int_{\Omega}u_{\varepsilon t}\varphi\bigg|=&\bigg|-\int_{\Omega}\nabla u_{\varepsilon}\cdot\nabla\varphi+\chi_{1}\int_{\Omega}u_{\varepsilon}F_{\varepsilon}(u_{\varepsilon})\nabla w_{\varepsilon}\cdot\nabla\varphi+\int_{\Omega}(w_{\varepsilon}-\mu_{1}u_{\varepsilon}^{2})\varphi\bigg|\\
\leq&\int_{\Omega}|\nabla u_{\varepsilon}|+\chi_{1}\int_{\Omega}u_{\varepsilon}|\nabla w_{\varepsilon}|+\int_{\Omega}w_{\varepsilon}+\mu_{1}\int_{\Omega}u_{\varepsilon}^{2}\\
\leq&\int_{\Omega}|\nabla u_{\varepsilon}|^{\frac{4}{3}}+(1+\mu_{1})\int_{\Omega}u_{\varepsilon}^{2}+\frac{\chi_{1}^{2}}{4}\int_{\Omega}|\nabla w_{\varepsilon}|^{2}+C_{3}
\end{aligned}
\end{equation}
and
\begin{equation}\label{106!}
\begin{aligned}
\bigg|\int_{\Omega}v_{\varepsilon t}\varphi\bigg|=&\bigg|-\int_{\Omega}\nabla v_{\varepsilon}\cdot\nabla\varphi+\chi_{2}\int_{\Omega}v_{\varepsilon}F_{\varepsilon}(v_{\varepsilon})\nabla w_{\varepsilon}\cdot\nabla\varphi+\int_{\Omega}(ru_{\varepsilon}v_{\varepsilon}+w_{\varepsilon}-\mu_{2}v_{\varepsilon}^{2})\varphi\bigg|\\
\leq&\int_{\Omega}|\nabla v_{\varepsilon}|+\chi_{2}\int_{\Omega}v_{\varepsilon}|\nabla w_{\varepsilon}|+r\int_{\Omega}u_{\varepsilon}v_{\varepsilon}+\int_{\Omega}w_{\varepsilon}+\mu_{2}\int_{\Omega}v_{\varepsilon}^{2}\\
\leq&\int_{\Omega}|\nabla v_{\varepsilon}|^{\frac{4}{3}}+\int_{\Omega}u_{\varepsilon}^{2}+(1+\mu_{2}+\frac{r^{2}}{4})\int_{\Omega}v_{\varepsilon}^{2}+\frac{\chi_{2}^{2}}{4}\int_{\Omega}|\nabla w_{\varepsilon}|^{2}+C_{4}
\end{aligned}
\end{equation}
for all $t >0$ and $\varepsilon\in(0,1)$ with some positive constants $C_{3}$ and $C_{4}$. Due to the Sobolev embedding $W^{2,q}(\Omega)\hookrightarrow W^{1,\infty}(\Omega)$ $(q>N)$ and \eqref{w2}, one can find some positive constants $C_{5}$ and $C_{6}(T)$ such that the combination of \eqref{105} and \eqref{106!} entails
\begin{equation*}
\begin{aligned}
&\int_{0}^{T}\|u_{\varepsilon t}(\cdot,t)\|_{(W^{2,q}(\Omega))^{\ast}}dt+\int_{0}^{T}\|v_{\varepsilon t}(\cdot,t)\|_{(W^{2,q}(\Omega))^{\ast}}dt\\
\leq& C_{5}\left(\int_{0}^{T}\int_{\Omega}|\nabla u_{\varepsilon}|^{\frac{4}{3}}+\int_{0}^{T}\int_{\Omega}|\nabla v_{\varepsilon}|^{\frac{4}{3}}+\int_{0}^{T}\int_{\Omega}u_{\varepsilon}^{2}+\int_{0}^{T}\int_{\Omega}v_{\varepsilon}^{2}+\int_{0}^{T}\int_{\Omega}|\nabla w_{\varepsilon}|^{2}+1\right)\\
\leq&C_{6}(T)\ \ \ \mbox{for all}\ T >0\ \mbox{and}\ \varepsilon\in(0,1),
\end{aligned}
\end{equation*}
which proves \eqref{w42}. Likewise, testing the third equation in \eqref{weak model} against the same $\varphi$, we utilize the Cauchy-Schwarz inequality to see that
\begin{equation}\label{108}
\begin{aligned}
\bigg|\int_{\Omega}w_{\varepsilon t}\varphi\bigg|^{2}=&\bigg|-\int_{\Omega}\nabla w_{\varepsilon}\cdot\nabla\varphi+\int_{\Omega}(u_{\varepsilon}+v_{\varepsilon}-w_{\varepsilon})\varphi\bigg|^{2}\\
\leq&\left\{|\Omega|^{\frac{1}{2}}\cdot\left(\int_{\Omega}|\nabla w_{\varepsilon}|^{2}\right)^{\frac{1}{2}}+|\Omega|^{\frac{1}{2}}\left(\int_{\Omega}u_{\varepsilon}^{2}\right)^{\frac{1}{2}}+|\Omega|^{\frac{1}{2}}\left(\int_{\Omega}v_{\varepsilon}^{2}\right)^{\frac{1}{2}}+|\Omega|^{\frac{1}{2}}\left(\int_{\Omega}w_{\varepsilon}^{2}\right)^{\frac{1}{2}}\right\}^{2}\\
\leq&4|\Omega|\left(\int_{\Omega}|\nabla w_{\varepsilon}|^{2}+\int_{\Omega}u_{\varepsilon}^{2}+\int_{\Omega}v_{\varepsilon}^{2}+\int_{\Omega}w_{\varepsilon}^{2}\right)\ \ \ \mbox{for all}\ t >0\ \mbox{and}\ \varepsilon\in(0,1),
\end{aligned}
\end{equation}
which upon an integration in time yields \eqref{w43} thanks to \eqref{w2}.
\end{proof}

After the above preparations, we are now able to prove the global existence of weak solution to system \eqref{model} by a standard extraction procedure.
\begin{lemma}\label{W5}
For any $\mu_{i}>0$ $(i=1,2)$, there exists a sequence $(\varepsilon_{j})_{j\in\mathbb{N}}\subset(0,1)$ and functions
\begin{equation}\label{weak1}
\left\{\aligned
&u\in L^{2}_{loc}([0,\infty);L^{2}(\Omega))\cap L_{loc}^{\frac{4}{3}}([0,\infty);W^{1,\frac{4}{3}}(\Omega)),
&&\\
&v\in L^{2}_{loc}([0,\infty);L^{2}(\Omega))\cap L_{loc}^{\frac{4}{3}}([0,\infty);W^{1,\frac{4}{3}}(\Omega)),
&&\\
&w\in L^{2}_{loc}([0,\infty);W^{2,2}(\Omega))
\endaligned\right.
\end{equation}
such that $\varepsilon_{j}\searrow0$ as $j\rightarrow\infty$, and that
\begin{equation}\label{w51}
u_{\varepsilon}\rightarrow u\ \ \mbox{a.e.}\ \mbox{in}\ \Omega\times(0,\infty)\ \ \mbox{and}\ \ \mbox{in}\ L_{loc}^{\frac{4}{3}}(\overline{\Omega}\times[0,\infty))
\end{equation}
\begin{equation}\label{w52}
v_{\varepsilon}\rightarrow v\ \ \mbox{a.e.}\ \mbox{in}\ \Omega\times(0,\infty)\ \ \mbox{and}\ \ \mbox{in}\ L_{loc}^{\frac{4}{3}}(\overline{\Omega}\times[0,\infty))
\end{equation}
\begin{equation}\label{w53}
w_{\varepsilon}\rightarrow w\ \ \mbox{a.e.}\ \mbox{in}\ \Omega\times(0,\infty)\ \ \mbox{and}\ \ \mbox{in}\ L_{loc}^{2}(\overline{\Omega}\times[0,\infty))
\end{equation}
\begin{equation}\label{w54}
\nabla w_{\varepsilon}\rightarrow\nabla w\ \ \mbox{a.e.}\ \mbox{in}\ \Omega\times(0,\infty)\ \ \mbox{and}\ \ \mbox{in}\ L_{loc}^{2}(\overline{\Omega}\times[0,\infty))
\end{equation}
\begin{equation}\label{w55}
u_{\varepsilon}\rightharpoonup u\ \mbox{in}\ L_{loc}^{2}(\overline{\Omega}\times[0,\infty))
\end{equation}
\begin{equation}\label{w56}
\nabla u_{\varepsilon}\rightharpoonup \nabla u\ \mbox{in}\ L_{loc}^{\frac{4}{3}}(\overline{\Omega}\times[0,\infty))
\end{equation}
\begin{equation}\label{w57}
v_{\varepsilon}\rightharpoonup v\ \mbox{in}\ L_{loc}^{2}(\overline{\Omega}\times[0,\infty))
\end{equation}
\begin{equation}\label{w58}
\nabla v_{\varepsilon}\rightharpoonup \nabla v\ \mbox{in}\ L_{loc}^{\frac{4}{3}}(\overline{\Omega}\times[0,\infty))
\end{equation}
\begin{equation}\label{w59}
\nabla w_{\varepsilon}\rightharpoonup\nabla w\ \mbox{in}\ L^{2}(\overline{\Omega}\times[0,\infty))
\end{equation}
\begin{equation}\label{w591}
u_{\varepsilon}F_{\varepsilon}(u_{\varepsilon})\nabla w_{\varepsilon}\rightharpoonup u\nabla w\ \mbox{in}\ L^{1}(\overline{\Omega}\times[0,\infty))
\end{equation}
\begin{equation}\label{w592}
v_{\varepsilon}F_{\varepsilon}(v_{\varepsilon})\nabla w_{\varepsilon}\rightharpoonup v\nabla w\ \mbox{in}\ L_{loc}^{1}(\overline{\Omega}\times[0,\infty))
\end{equation}
as $\varepsilon=\varepsilon_{j}\searrow0$ $(j\rightarrow\infty)$, where the triple $(u,v,w)$ is a global weak solution to \eqref{model} in the sense of Definition \ref{weak solution}.
\end{lemma}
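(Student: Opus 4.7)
The plan is to combine the $\varepsilon$-independent bounds of Lemmas \ref{W1}, \ref{W2} and \ref{W4} with an Aubin-Lions type compactness argument to extract a subsequence $(\varepsilon_{j})_{j\in\mathbb{N}}$ along which the listed convergences hold, and then pass to the limit in the weak formulation of the regularized problem \eqref{weak model}. First I would invoke Aubin-Lions using $\nabla u_{\varepsilon}\in L^{4/3}_{loc}$ from \eqref{w41}, $u_{\varepsilon t}\in L^{1}_{loc}((W^{2,q}(\Omega))^{\ast})$ (for any $q>N$) from \eqref{w42}, and the chain $W^{1,4/3}(\Omega)\hookrightarrow\hookrightarrow L^{4/3}(\Omega)\hookrightarrow (W^{2,q}(\Omega))^{\ast}$ to obtain relative compactness of $(u_{\varepsilon})$ in $L^{4/3}_{loc}(\overline{\Omega}\times[0,\infty))$; passing to a subsequence yields \eqref{w51} together with a.e. convergence, and the identical argument applied to $v_{\varepsilon}$ yields \eqref{w52}. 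For $w_{\varepsilon}$, the bound $w_{\varepsilon}\in L^{2}_{loc}(W^{2,2})$ from \eqref{w2} combined with $w_{\varepsilon t}\in L^{2}_{loc}((W^{1,2}(\Omega))^{\ast})$ from \eqref{w43} and the compact embedding $W^{2,2}(\Omega)\hookrightarrow\hookrightarrow W^{1,2}(\Omega)$ gives compactness of $(w_{\varepsilon})$ in $L^{2}_{loc}([0,\infty);W^{1,2}(\Omega))$, which yields \eqref{w53} and, after a further subsequence, the pointwise assertion \eqref{w54}.

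The weak convergences \eqref{w55}-\eqref{w59} then follow from \eqref{w2} and \eqref{w41} by standard weak compactness in reflexive $L^{p}$ spaces; uniqueness of limits identifies these weak limits with the strong limits $u,v,w$ already obtained.

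The main obstacle is the passage to the limit in the chemotactic fluxes \eqref{w591} and \eqref{w592}. Since $F_{\varepsilon}(s)=(1+\varepsilon s)^{-N-1}\to1$ pointwise, combining the a.e. convergences \eqref{w51} and \eqref{w54} gives $u_{\varepsilon}F_{\varepsilon}(u_{\varepsilon})\nabla w_{\varepsilon}\to u\nabla w$ a.e. in $\Omega\times(0,\infty)$. To upgrade this to weak convergence in $L^{1}_{loc}$, I would establish uniform boundedness in a reflexive space: the estimate \eqref{w2} gives $w_{\varepsilon}\in L^{2}_{loc}(W^{2,2})$, so Sobolev embedding upgrades this to $\nabla w_{\varepsilon}\in L^{2}_{loc}([0,\infty);L^{2N/(N-2)}(\Omega))$ uniformly in $\varepsilon$. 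Coupling this via H\"{o}lder's inequality with $u_{\varepsilon}F_{\varepsilon}(u_{\varepsilon})\leq u_{\varepsilon}$ bounded in $L^{2}_{loc}([0,\infty);L^{2}(\Omega))$ shows that $u_{\varepsilon}F_{\varepsilon}(u_{\varepsilon})\nabla w_{\varepsilon}$ is uniformly bounded in $L^{2}_{loc}([0,\infty);L^{N/(N-1)}(\Omega))$. Since $N/(N-1)>1$ this space is reflexive, so the a.e. limit and the bounded sequence must agree weakly, yielding \eqref{w591} in particular in $L^{1}_{loc}$; the argument for \eqref{w592} is identical.

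Finally, with \eqref{w51}-\eqref{w592} in hand, I would pass to the limit $\varepsilon=\varepsilon_{j}\searrow0$ in the weak formulation of \eqref{weak model} tested against arbitrary $\varphi\in C_{0}^{\infty}(\overline{\Omega}\times[0,\infty))$: the diffusion terms use \eqref{w56}, \eqref{w58} and \eqref{w59}; the chemotactic terms use \eqref{w591} and \eqref{w592} (together with $F_{\varepsilon}\to1$); the quadratic degradation terms $\mu_{i}u_{\varepsilon}^{2}$, $\mu_{i}v_{\varepsilon}^{2}$ and the coupling $ru_{\varepsilon}v_{\varepsilon}$ are handled by Vitali's theorem, using the strong $L^{4/3}_{loc}$ convergences from \eqref{w51}-\eqref{w52} to promote a.e. convergence and the uniform $L^{2}_{loc}$ bounds of \eqref{w2} to supply equi-integrability; while the linear source terms follow directly from \eqref{w51}-\eqref{w53}. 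This yields the weak identities \eqref{i1}-\eqref{i3} and completes the proof that $(u,v,w)$ is a global weak solution in the sense of Definition \ref{weak solution}.
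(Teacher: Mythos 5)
Your overall skeleton coincides with the paper's: Aubin--Lions for the strong compactness of $(u_\varepsilon),(v_\varepsilon),(w_\varepsilon)$, weak compactness in reflexive $L^p$ spaces for \eqref{w55}--\eqref{w59}, a.e.\ convergence plus an identification argument for the fluxes, and a final limit passage. However, two of your concrete steps do not close as written. First, the H\"older computation for the chemotactic flux is off in the time variable: from $u_\varepsilon$ bounded in $L^2_{loc}([0,\infty);L^2(\Omega))$ and $\nabla w_\varepsilon$ bounded in $L^2_{loc}([0,\infty);L^{2N/(N-2)}(\Omega))$ you obtain $u_\varepsilon F_\varepsilon(u_\varepsilon)\nabla w_\varepsilon$ bounded only in $L^1_{loc}([0,\infty);L^{N/(N-1)}(\Omega))$, not in $L^2_{loc}([0,\infty);L^{N/(N-1)}(\Omega))$; the resulting space is $L^1$ in time, hence not reflexive, and a bound there does not prevent concentration in $t$, so your ``bounded in a reflexive space $+$ a.e.\ convergence'' mechanism fails as stated. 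This is repairable: either interpolate with the uniform-in-time bound $\|\nabla w_\varepsilon(\cdot,t)\|_{L^2(\Omega)}\le C$ from \eqref{w1} to land in a genuinely reflexive mixed space, or (as the paper does) exploit the \emph{strong} $L^2_{loc}$ convergence of $\nabla w_\varepsilon$ obtained from the Aubin--Lions step together with Egorov's theorem to identify the limit of the fluxes, using \eqref{w44} only for integrability.

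Second, and more seriously, your treatment of the quadratic terms $\mu_i u_\varepsilon^2$, $\mu_i v_\varepsilon^2$ and $ru_\varepsilon v_\varepsilon$ in the limit passage is not justified: Vitali's theorem applied to $u_\varepsilon^2$ requires equi-integrability of the family $\{u_\varepsilon^2\}$, and a uniform bound of $u_\varepsilon$ in $L^2_{loc}(\overline{\Omega}\times[0,\infty))$ (i.e.\ of $u_\varepsilon^2$ in $L^1$) supplies equi-integrability of $\{u_\varepsilon\}$ but not of $\{u_\varepsilon^2\}$ --- this is exactly the classical obstruction in the weak-solution theory for logistic chemotaxis systems (cf.\ Lankeit's work cited as \cite{L2015}). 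What is actually needed is a bound on $\int_0^T\int_\Omega u_\varepsilon^2\ln(1+u_\varepsilon)$ (or some $L^{2+\delta}$ control), after which de la Vall\'ee-Poussin yields the required equi-integrability; such a bound is latent in the proof of Lemma \ref{W2}, where the terms $-\mu_1\int_\Omega u_\varepsilon^2\ln u_\varepsilon$ and $-\mu_2\int_\Omega v_\varepsilon^2\ln v_\varepsilon$ appear with favorable sign and could be retained rather than discarded. Without recording and using this extra information, the convergence $\int_0^T\int_\Omega u_\varepsilon^2\varphi\to\int_0^T\int_\Omega u^2\varphi$ does not follow from what you have established (weak $L^2$ convergence plus a.e.\ convergence only give a Fatou-type inequality).
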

\begin{proof}
By Lemmas \ref{W1} and \ref{W4}, for each $T>0$ we can find a positive constant $C_{1}(T)$ such that
\begin{equation}\label{109}
\|u_{\varepsilon}\|_{L^{\frac{4}{3}}((0,T);W^{1,\frac{4}{3}}(\Omega))}\leq C_{1}(T) \ \ \ \mbox{and}\ \ \ \|u_{\varepsilon t}\|_{L^{1}((0,T);(W^{2,q}(\Omega))^{\ast})}\leq C_{1}(T)
\end{equation}
and
\begin{equation}\label{110}
\|v_{\varepsilon}\|_{L^{\frac{4}{3}}((0,T);W^{1,\frac{4}{3}}(\Omega))}\leq C_{1}(T) \ \ \ \mbox{and}\ \ \ \|v_{\varepsilon t}\|_{L^{1}((0,T);(W^{2,q}(\Omega))^{\ast})}\leq C_{1}(T)
\end{equation}
as well as
\begin{equation}\label{111}
\|w_{\varepsilon}\|_{L^{2}((0,T);W^{2,2}(\Omega))}\leq C_{1}(T) \ \ \ \mbox{and}\ \ \ \|w_{\varepsilon t}\|_{L^{2}((0,T);(W^{1,2}(\Omega))^{\ast})}\leq C_{1}(T).
\end{equation}
Then for any $T>0$, the application of the Aubin-Lions type lemma (see \cite{S1986}) can ensure the strong precompactness of $(u_{\varepsilon})_{\varepsilon\in(0,1)}$ and $(v_{\varepsilon})_{\varepsilon\in(0,1)}$ in $L^{\frac{4}{3}}(\overline{\Omega}\times(0,T))$, and of $(w_{\varepsilon})_{\varepsilon\in(0,1)}$ in $L^{2}((0,T);W^{1,2}(\Omega))$. These strong compactness properties enable us to pick a sequence $(\varepsilon_{j})_{j\in\mathbb{N}}\subset(0,1)$ such that \eqref{w51}-\eqref{w54} hold as $\varepsilon=\varepsilon_{j}\searrow0$ $(j\rightarrow\infty)$. The boundedness results \eqref{w2} and \eqref{w41} also provide a subsequence fulfilling \eqref{w55}-\eqref{w59}.

Now \eqref{w54} together with \eqref{w51}-\eqref{w52} and \eqref{F} provides a subsequence still labeled as $(\varepsilon_{j})_{j\in\mathbb{N}}\subset(0,1)$ such that
\begin{equation}\label{112}
u_{\varepsilon}F_{\varepsilon}(u_{\varepsilon})\nabla w_{\varepsilon}\rightarrow u\nabla w\ \ \mbox{a.e.}\ \mbox{in}\ \Omega\times(0,\infty)\ \ \mbox{as}\ \varepsilon=\varepsilon_{j}\searrow0
\end{equation}
and
\begin{equation}\label{113}
v_{\varepsilon}F_{\varepsilon}(v_{\varepsilon})\nabla w_{\varepsilon}\rightarrow v\nabla w\ \ \mbox{a.e.}\ \mbox{in}\ \Omega\times(0,\infty)\ \ \mbox{as}\ \varepsilon=\varepsilon_{j}\searrow0.
\end{equation}
According to \eqref{w44}, one also deduces that
\begin{equation*}
u_{\varepsilon}F_{\varepsilon}(u_{\varepsilon})\nabla w_{\varepsilon}\rightharpoonup z_{1}\ \ \mbox{in}\ L_{loc}^{1}(\overline{\Omega}\times[0,\infty))\ \ \mbox{as}\ \varepsilon=\varepsilon_{j}\searrow0
\end{equation*}
and
\begin{equation*}
v_{\varepsilon}F_{\varepsilon}(v_{\varepsilon})\nabla w_{\varepsilon}\rightharpoonup z_{2}\ \ \mbox{in}\ L_{loc}^{1}(\overline{\Omega}\times[0,\infty))\ \ \mbox{as}\ \varepsilon=\varepsilon_{j}\searrow0.
\end{equation*}
By \eqref{112}-\eqref{113}, we infer from the Egorov theorem that $z_{1}=u\nabla w$ and $z_{2}=v\nabla w$, which prove \eqref{w591}-\eqref{w592}.

Therefore, \eqref{w56}, \eqref{w58} and \eqref{w591}-\eqref{w592} can ensure the integrability of $\nabla u$, $\nabla v$, $u\nabla w$ and $v\nabla w$, and \eqref{w51}-\eqref{w59} warrant the regularity requirements in Definition \ref{weak solution}. Based on the above convergence properties, from a limit procedure we readily show the integral properties \eqref{i1}-\eqref{i3}. Now, we are in a position to establish a weak solution to \eqref{model} in the claimed sense.
\end{proof}

\textbf{Proof of Theorem 1.2.}  Theorem 1.2 follows trivially from Lemma \ref{W5}.\ \ \ \ $\blacksquare$

\section*{Acknowledgment}
The authors would like to thank the anonymous referees whose comments help to improve the contain of this paper. This work was supported by Shandong Provincial Natural Science Foundation (No. ZR2022JQ06) and the National Natural Science Foundation of China (11601215).

\section*{Conflict of interest statement}
The authors declare no conflicts of interest.

\end{document}